\newtheorem{thm}{Theorem}[section]
\newtheorem{defn}[thm]{Definition}
\newtheorem{ex}[thm]{Example}
\newtheorem{prop}[thm]{Proposition}
\newtheorem{PROP}[thm]{``Proposition''}
\newtheorem{cor}[thm]{Corollary}
\newtheorem{lem}[thm]{Lemma}
\newtheorem{conj}[thm]{Conjecture}
\newtheorem{rem}[thm]{Remark}
\newcommand{\mf}[1]{{\mathfrak{#1}}}
\newcommand{\mr}[1]{{\mathrm{#1}}}
\newcommand{\mb}[1]{{\mathbf{#1}}}
\newcommand{\bb}[1]{{\mathbb{#1}}}
\newcommand{\mca}[1]{{\mathcal{#1}}}
\newcommand{\Hom}{\mr{Hom}}
\newcommand{\Z}{\bb{Z}}
\newcommand{\C}{\bb{C}}
\newcommand{\R}{\bb{R}}
\newcommand{\vv}{\mb{v}}
\numberwithin{equation}{section}
\newcommand{\A}{\mathcal{A}}
\newcommand{\Afd}{\mathcal{A}}
\newcommand{\kk}{\mathbf{k}}
\newcommand{\M}{\mathcal{M}_{\Afd}}
\newcommand{\MM}{\mathcal{M}^{(2)}_{\Afd}}
\newcommand{\sS}{\mathcal{S}}
\newcommand{\tT}{\mathcal{T}}
\newcommand{\uu}{\underline{u}}
\newcommand{\X}{\mathcal{X}}
\newcommand{\Y}{\mathcal{Y}}
\newcommand{\teng}{{}^{\mr{t}}\hspace{-2pt} g}
\newcommand{\da}{\mathcal{D}\Gamma}
\newcommand{\pera}{\mathrm{per}\Gamma}
\newcommand{\dfa}{\mathcal{D}^{\mathrm{fd}}\Gamma}
\newcommand{\moda}{\mathrm{Mod}\hspace{1pt}J}
\newcommand{\modfa}{\mathrm{mod}\hspace{1pt}J}
\newcommand{\too}{\longrightarrow}
\newcommand{\tepk}{\tilde{\varepsilon}_{\mca{C}_\kk}}
\newcommand{\ep}{\varepsilon}
\newcommand{\tep}{\tilde{\varepsilon}}
\newcommand{\hep}{\hat{\varepsilon}}
\newcommand{\MH}{\mr{MH}}
\newcommand{\hMH}{\widehat{\mr{MH}}}
\newcommand{\QT}{\mr{QT}}
\newcommand{\hQT}{\widehat{\mr{QT}}}
\newcommand{\DQT}{\mr{Q}\mathbb{T}}
\newcommand{\hDQT}{\widehat{\DQT}}
\newcommand{\T}{\mr{T}}
\newcommand{\DT}{\mathbb{T}}
\newcommand{\hT}{\widehat{\mr{T}}}
\newcommand{\hDT}{\widehat{\mathbb{T}}}
\newcommand{\pf}{^{\mr{pf}}}
\title{Donaldson-Thomas theory and cluster algebras}
\author{Kentaro Nagao\\
RIMS, Kyoto University\\
  Kyoto 606-8502, Japan
}
\begin{document}

\maketitle
\begin{abstract}
We provide a transformation formula of non-commutative Donaldson-Thomas invariants under a composition of mutations. 
Consequently, we get a description of a composition of cluster transformations in terms of quiver Grassmannians.
\end{abstract}

\setcounter{tocdepth}{2}
\tableofcontents

\section*{Introduction}
Donaldson-Thomas invariants (\cite{thomas-dt, mnop}) are defined as the topological Euler characteristics (more precisely, the weighted Euler characteristics weighted by Behrend function \cite{behrend-dt}) of the moduli spaces of sheaves on a Calabi-Yau $3$-fold (more generally, the moduli spaces of objects in a $3$-Calabi-Yau category \cite{szendroi-ncdt,joyce-4,ks,joyce-song}). 
Dominic Joyce introduced the motivic Hall algebra for an Abelian category in his study of generalized Donaldson-Thomas invariants (\cite{joyce-2}).
One of the important results is that for a $3$-Calabi-Yau category there exists a Poisson algebra homomorphism, so called the {\it integration map}, from the motivic Hall algebra to a power series ring (\cite{joyce-2, joyce-song, bridgeland-hall})
The integration map is given by taking the (weighted) Euler characteristic of an element in the motivic Hall algebra.
Due to the integration map, we get the following powerful method in Donaldson-Thomas theory for $3$-Calabi-Yau categories, which originates with Reineke's computation of the Betti numbers of the spaces of stable quiver representations (\cite{reineke-HN}):
\begin{quote}
Starting from a simple categorical statement, provide an identity in the motivic Hall algebra.
Pushing it out by the integration map, we get a power series identity for the generating functions of Donaldson-Thomas invariants.
\end{quote}
The aim of this paper is to provide 
\begin{itemize}
\item[(1)] Theorem \ref{thm_trans} (Theorem \ref{thm_01}) : a transformation formula of the noncommutative Donaldson-Thomas invariants, and 
\item[(2)] Theorem \ref{thm_CC} (Theorem \ref{thm_02}) and the results in \S \ref{} : its application to the theory of cluster algebras 
\end{itemize}
using this method.

\subsection*{Transformation formula of ncDT invariants}
Let $Q$ be a quiver and $W$ be a potential. 
In this paper, we always assume that 
\begin{itemize}
\item the quiver has the vertex set $I=\{1,\ldots,n\}$, 
\item the quiver has no loops and oriented $2$-cycles, and 
\item the potential is {\it finite}, i.e. a {finite} linear combination of oriented cycles.
\end{itemize}
Let $J=J_{Q,W}$ be the (non-complete) Jacobi algebra.
We have a $3$-Calabi-Yau triangulated category (the derived category of Ginzburg's dg algebra) with a t-structure whose core $\mca{A}$ is the module category of the Jacobi algebra. 
It was proposed by B. Szendroi (\cite{szendroi-ncdt}) to study Donaldson-Thomas theory for the Abelian category $\mca{A}\simeq \mr{mod}J$ ({\it non-commutative Donaldson-Thomas theory}\,).

For a vertex $i\in I$, let $P_i$ denote the projective indecomposable $J$-module corresponding to the vertex $i$. 
For a dimension vector $\mathbf{v}\in (\mb{Z}_{\geq 0})^{I}$, let $\mr{Hilb}_J(i;\mathbf{v})$ be the moduli scheme which parametrizes elements in $V\in \modfa$ equipped with a surjection from $P_i$ such that $[V]=\mathbf{v}$:
\[
\mr{Hilb}_J(i;\mathbf{v}):=\{P_i\twoheadrightarrow V\mid V\in \A, [V]=\mathrm{v} \}.
\]
The (Euler characteristic version of the) non-commutative Donaldson-Thomas invariant is defined by
\[
\mr{DT}_{J,+}(i;\mathbf{v})=e_+({\mr{Hilb}_J(i;\mathbf{v})}):=
e({\mr{Hilb}_J(i;\mathbf{v})})
\]
where $e(\bullet)$ denote the topological Euler characteristic.
In the context of this paper, we will also deal with the invariant 
\[
\mr{DT}_{J,-}(i;\mathbf{v})=e_-({\mr{Hilb}_J(i;\mathbf{v})})
\]
where $e_-(\bullet)$ denote the weighted Euler characteristic weighted by the Behrend function (Definition \ref{defn_ncdt}).

For a vertex $k$, we assume that the mutation $\mu_k(Q,W)$ is well-defined.
Due to the result by Keller and Yang, $(Q,W)$ and $\mu_k(Q,W)$ provide the same derived category with different t-structures (\cite{dong-keller,keller-completion}).
Kontsevich and Soibelman (\cite{ks}) observed that the cluster transformation appears in the transformation formula of non-commutative Donaldson-Thomas invariants under a mutation. 
In this paper, generalizing their observation, we provide a transformation formula of the non-commutative Donaldson-Thomas invariants under a composition of mutations.

We put
\[
\begin{array}{c}
T_{Q,\pm}:=\mathbb{C}\left[(y_{1,\pm})^{\pm 1},\ldots ,(y_{n,\pm})^{\pm 1}\right],\quad 
T^\vee_{Q,\pm}:=\mathbb{C}\left[(x_{1,\pm})^{\pm 1},\ldots ,(x_{n,\pm})^{\pm 1}\right],\vspace{2mm}\\
\mathbb{T}_{Q,\pm}:=T^\vee_{Q,\pm}\otimes_{\mathbb{C}}T_{Q,\pm}
\end{array}
\]
They are called the semiclassical limits of {\it quantum torus}, {\it quantum dual torus} and {\it quantum double torus}\footnote{Since $\mr{Spec}$ of them are algebraic tori, we call them tori with a slight abuse.} respectively.
They are taken as the group algebra of the lattices $M_Q$, $L_Q$ and $M_Q\oplus L_Q$ which are related to the Grothendieck group of the derived category (\S \ref{subsec_Gro}).
Since we have derived equivalences between $(Q,W)$ and $\mu_\mathbf{k}(Q,W)$, we have isomorphisms of the corresponding tori. 
We identify them by these isomorphisms.

We take a certain completion $\widehat{\mathbb{T}}_{Q,\pm}$ of $\mathbb{T}_{Q,\pm}$ (\S \ref{subsub_auto}).
We define the generating function of the Donaldson-Thomas invariants by
\[
\mca{Z}_{J,\pm}^{i}:=
\sum_{\mathbf{v}}\mr{DT}_{J,\pm}(i;\mathbf{v})\cdot y^\mathbf{v}_\pm
\]
where $y^\mathbf{v}_\pm:=\prod (y_{i,\pm})^{v_i}$. 
Using the generating functions, we define algebra automorphisms $\mca{DT}_{J,\pm}$ of $\widehat{\mathbb{T}}_{Q,\pm}$ by
\[
\mca{DT}_{J,\pm}(x_{i,\pm}):=x_{i,\pm}\cdot \mca{Z}_{J,\pm}^{i},\quad 
\mca{DT}_{J,\pm}(y_{i,\pm}):=y_{i,\pm}\cdot \prod_j(\mca{Z}_{J,\pm}^{j})^{\bar{Q}(j,i)}
\]
where 
\[
\bar{Q}(j,i):=Q(i,j)-Q(j,i),\quad Q(i,j)=\sharp \{\text{arrows from $i$ to $j$ in $Q$}\}.
\]
For a sequence of vertices $\mathbf{k}=(k_1,\ldots,k_l)\in I^l$, 
let $\mu_\mathbf{k}(Q,W)$ denote the new QP $\mu_{k_l}(\cdots \mu_{k_1}(Q,W)\cdots)$ and $J_\mathbf{k}$ denote the Jacobi algebra associated to $\mu_\mathbf{k}(Q,W)$.
Then we have two isomorphisms $\mca{DT}_{J,\pm}$ and 
$\mca{DT}_{J_\kk,\pm}$ of the torus \footnote{To be precise, they are isomorphisms of different completions. See Theorem \ref{thm_trans} for the precise statement.}.
Our transformation formula of DT invariants is given as the relation of these isomorphisms. 

In \S \ref{subsec_grass}, we construct a $J$-module $R_{\kk,i}$ and 
define the {\it quiver Grassmannian} 
which parametrizes quotient modules of $R_{\kk,i}$ :
\[
\mathrm{Grass}(\mathbf{k};i,\mathbf{v}):=
\{R_{\kk,i}\twoheadrightarrow V\mid V\in \A,\ [V]=\mathbf{v}\}.
\]
The formula is described in terms of (weighted) Euler characteristics of the quiver Grassmannians.
\begin{thm}\textup{($=$ Theorem \ref{thm_trans}, transformation formula of ncDT invariants)}\label{thm_01}
Assume that the $(Q,W)$ is successively f-mutatable with respect to the sequence $\kk$ (see \S \ref{subsub_pmutation} for the details of the assumption). 
Then we have the following ``commutative diagram'' \footnote{This diagram is not rigorous in that the compositions of the maps are not well-defined. See Theorem \ref{thm_trans} for the precise statement.}:
\[
\xymatrix{
\widehat{\mathbb{T}}_{Q_\kk ,\pm}
\ar[rr]^{\mr{Ad}_{\mca{T}_\kk[-1],\pm}}
\ar[d]_{\mca{DT}_{\hspace{-1mm}J_\kk,\pm}}
& &
\widehat{\mathbb{T}}_{Q,\pm}
\ar[d]^{\mca{DT}_{\hspace{-1mm}J,\pm}}\\
\widehat{\mathbb{T}}_{Q_\kk,\pm}
\ar[rr]_{\mr{Ad}_{\mca{T}_\kk,\pm}}
& &
\widehat{\mathbb{T}}_{Q,\pm}.
}
\]
The morphism $\mr{Ad}_{\mca{T}_\kk[-1],\pm}$ is given by 
\begin{align}
\mr{Ad}_{\mca{T}_\kk[-1],\pm}(x_{\kk,i,\pm})
&=x_{\kk,i,\pm}\cdot \Biggl(\sum_\mathbf{v}e_\pm\Bigr(\mathrm{Grass}(\mathbf{k};i,\mathbf{v})\Bigr)\cdot\mathbf{y}_\pm^{-\mathbf{v}}\Biggr),\\
\mr{Ad}_{\mca{T}_\kk[-1],\pm}(y_{\kk,i,\pm})
&=y_{\kk,i,\pm}\cdot \prod_j\Biggl(\sum_\mathbf{v}e_\pm\Bigr(\mathrm{Grass}(\mathbf{k};j,\mathbf{v})\Bigr)\cdot\mathbf{y}_\pm^{-\mathbf{v}}\Biggr)^{\bar{Q}(j,i)}.
\end{align}
where $x_{\kk,i,\pm}$ and $y_{\kk,i,\pm}$ are generators of ${\mathbb{T}}_{Q_\kk ,\pm}$\footnote{The variables $x_{\kk,i,\pm}$ and $y_{\kk,i,\pm}$ on the left hand side of the equations does make sense since we have identified the two tori ${\mathbb{T}}_{Q_\kk ,\pm}$ and ${\mathbb{T}}_{Q,\pm}$.}.
The morphism $\mr{Ad}_{\mca{T}_\kk,\pm}$ is given by 
\begin{equation}\label{eq_Sigma}
\mr{Ad}_{\mca{T}_\kk,\pm}
:=\Sigma\circ \mr{Ad}_{\mca{T}_\kk[-1],\pm}\circ \Sigma
\end{equation}
where $\Sigma$ is the involution of the tori given by 
\[
\Sigma(x_{i,\pm})=(x_{i,\pm})^{-1},\quad
\Sigma(y_{i,\pm})=(y_{i,\pm})^{-1}.
\]
\end{thm}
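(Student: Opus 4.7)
The plan is to follow the Reineke-Joyce-Kontsevich-Soibelman strategy flagged in the introduction: lift both the DT automorphisms $\mca{DT}_{J,\pm}$ and the tilting morphisms $\mr{Ad}_{\mca{T}_\kk[-1],\pm}$ to elements of (a suitable completion of) the motivic Hall algebra of $\dfa$, and then push them through the integration map, which for a $3$-Calabi-Yau category is a Poisson algebra homomorphism by Joyce and Bridgeland. Concretely, $\mca{DT}_{J,\pm}$ is conjugation by the integration-map image of the Hall-algebra class $\sum_\vv[\mr{Hilb}_J(i;\vv)]$ of $P_i$-framed objects in the heart $\A$, and $\mca{DT}_{J_\kk,\pm}$ is the analogous construction in the mutated heart $\A_\kk$.

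The categorical input is the Keller-Yang theorem, which places $\A$ and $\A_\kk$ as two hearts of the common triangulated category $\dfa$, related by the tilt at the torsion pair cut out by $\mca{T}_\kk[-1]$. The role of $R_{\kk,i}\in\A$ from \S\ref{subsec_grass} is to translate the new projective $P_{\kk,i}\in\A_\kk$ into the language of $\A$: quotients $R_{\kk,i}\twoheadrightarrow V$ with $V\in\A$ parametrise the ways a $P_{\kk,i}$-framing is read off as an $\A$-object. Stratifying the moduli stack of $R_{\kk,i}$-framed objects by the torsion class on $\A$ cut out by $\A_\kk$ then yields, after the usual ``no-extensions'' arguments, a Hall-algebra identity
\[
[\text{$R_{\kk,i}$-framings}]\ =\ \Bigl[\textstyle\bigsqcup_\vv\mr{Grass}(\kk;i,\vv)\Bigr]\star[\text{$P_{\kk,i}$-framings in }\A_\kk].
\]
Applying the integration map term by term produces the stated formula for $\mr{Ad}_{\mca{T}_\kk[-1],\pm}(x_{\kk,i,\pm})$, with the minus sign in $\mathbf{y}_\pm^{-\vv}$ reflecting that the relevant dimensions lie in the part of the Grothendieck group that is ``positive'' for $\A_\kk[-1]$ but ``negative'' for $\A_\kk$. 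The formula for $y_{\kk,i,\pm}$ follows by the definition of the $\mathbf{y}$-coordinates in terms of the $\mathbf{x}$-coordinates via the antisymmetrised exchange matrix $\bar Q$, and the commutativity of the square is then the Hall-algebra shadow of the tautology that a framed object in $\A$ may be read either directly or first in $\A_\kk$ and then tilted. The relation \eqref{eq_Sigma} between $\mr{Ad}_{\mca{T}_\kk,\pm}$ and $\mr{Ad}_{\mca{T}_\kk[-1],\pm}$ is the formal duality between sub-object and quotient framings, realised on the torus by $\Sigma$, and requires only that one repeat the construction above with roles swapped.

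The main obstacle, as the paper's own footnotes stress, is that the diagram does not literally live in a single category: the two rows connect genuinely different completions of the torus ${\DT}_{Q_\kk,\pm}$ dictated by the heart-dependent notion of ``positive'' dimension vector. The successive f-mutatability hypothesis has to be invoked to guarantee that $R_{\kk,i}$ is finite-dimensional and lies in the correct torsion class, so that the Grassmannian generating series converges in $\hDT_{Q_\kk,\pm}$, and that the two positive cones are genuinely interchanged by $\mr{Ad}_{\mca{T}_\kk[-1],\pm}$ so that the composite maps make sense. Establishing this bookkeeping, and verifying the Hall-algebra identity at the correct level of completion, is where the bulk of the technical work will lie; once these are in place the pushforward through the integration map is essentially automatic.
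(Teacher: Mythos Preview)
Your overall strategy matches the paper's---lift everything to the motivic Hall algebra and push through the integration map---but you have misidentified the Hall-algebra elements that generate the automorphisms, and this propagates into your proposed key identity.

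In the paper, \emph{every} automorphism in the diagram is of the form $\widehat{\mr{Ad}}_{\mca{C},\sigma}:=\exp\bigl(\mr{ad}_{I_\sigma(\hep_\mca{C})}\bigr)$, i.e.\ (semiclassical) conjugation by the class of the \emph{entire moduli stack} $\mca{M}_\mca{C}$ of objects in an extension-closed subcategory $\mca{C}\subset\dfa$. In particular $\mca{DT}_{J,\sigma}=\widehat{\mr{Ad}}_{\A,\sigma}$; it is \emph{not} conjugation by the Hilbert class. The Hilbert scheme only appears as the \emph{result} of applying this conjugation to the generator $x_{i,\sigma}$, via the commutator identity together with $\mca{M}_\A[\mathbf{w}_i]*\mca{M}_\A^{-1}=\mf{Hom}(P_i,\A)*\mca{M}_\A^{-1}=\mr{Hilb}_J(i)$. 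Likewise $\mr{Ad}_{\mca{T}_\kk[-1],\sigma}=\widehat{\mr{Ad}}_{\mca{T}_\kk[-1],\sigma}$, and the Grassmannian arises from $\mf{Hom}(P_{\kk,i}[1],\mca{T}_\kk)*\mca{M}_{\mca{T}_\kk}^{-1}=\mr{Grass}(R_{\kk,i},\A)$. Once all four automorphisms are recognised as $\widehat{\mr{Ad}}_{\mca{C},\sigma}$'s, the commutative diagram is simply the image under $I_\sigma$ of the torsion pair factorisations $\mca{M}_\A=\mca{M}_{\mca{T}_\kk}*\mca{M}_{\mca{F}_\kk}$ and $\mca{M}_{\A_\kk}=\mca{M}_{\mca{F}_\kk}*\mca{M}_{\mca{T}_\kk[-1]}$: both sides share the common middle factor $\widehat{\mr{Ad}}_{\mca{F}_\kk,\sigma}$, and cancelling it gives Theorem~\ref{thm_trans}. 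Your proposed identity $[\text{$R_{\kk,i}$-framings}]=[\mr{Grass}]\star[\text{$P_{\kk,i}$-framings in }\A_\kk]$ mixes classes from Hall algebras of two different hearts and is not the mechanism; the correct identities live entirely in $\hMH(\A)$ and involve the \emph{unframed} stacks $\mca{M}_\mca{C}$, with framed objects entering only through the commutator with $x_{i,\sigma}$.
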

If we take a sequence $\kk=(k)$ of length $1$, then we have
\[
R_{(k),i}=\begin{cases}
0 & i\neq k,\\
s_k & i=k.
\end{cases}
\]
Hence we have
\begin{equation}\label{eq_preCT}
\mr{Ad}_{\mca{T}_{(k)}[-1],\pm}(x_{(k),i,\pm})=
\begin{cases}
x_{(k),i,\pm} & i\neq k,\\
x_{(k),k,\pm}(1+(y_{k,\pm})^{-1}) & i=k.
\end{cases}
\end{equation}
This recovers the results in \cite[pp143]{ks}.

\subsection*{Composition of cluster transformations}
Cluster algebras were introduced by Fomin and Zelevinsky (\cite{fomin-zelevinsky1}) in their study of dual canonical bases and total positivity in semi-simple groups. 
Although the initial aim has not been established, it has been discovered that the theory of cluster algebras has many links with a wide range of mathematics (see \cite[\S 1.1]{keller-survay} and the references there).
Since a cluster transformation helps us to understand the whole structure in an inductive way, study of compositions of cluster transformations is important.

A {\it seed} is a pair $(Q\mid\underline{u})$, where
\begin{enumerate}
\item $Q$ is a quiver without loops and oriented $2$-cycles, and 
\item $\uu=(u_1,\ldots,u_n)$ is a free generating set of the field $\C(x_1,\ldots,x_n)$.
\end{enumerate}
For a vertex $k\in I$, the {\it mutation} $\mu_k(Q\mid\uu)$ of $(Q\mid\uu)$ at $k$ is the seed $(\mu_kQ\mid\uu^{\mr{new}})$, where $\mu_kQ$ is the mutation of the quiver (\S \ref{qmutation}) and $\uu^{\mr{new}}$ is obtained from $\uu$ by replacing $u_k$ with
\begin{equation}\label{eq_cluster}
u_k^{\mr{new}}=u_k^{-1}\Biggl(\prod_i (u_k)^{Q(i,k)}+\prod_i (u_k)^{Q(k,i)}\Biggr)
\end{equation}
This is called the {\it cluster transformation}.
Given a quiver $Q$, we call $(Q\mid\underline{x})=(Q,(x_1,\ldots,x_n))$ an {\it initial seed}.
\begin{defn}
For a sequence of vertices $\mathbf{k}=(k_1,\ldots,k_l)\in I^l$ and a vertex $i\in I$, we define rational functions $FZ_{\mathbf{k},i}(\underline{x})$ by 
\[
\mu_{k_l}(\cdots(\mu_2(\mu_1(Q\mid\underline{x}))\cdots)
=(Q_\kk\mid(FZ_{\mathbf{k},i}(\underline{x}))).
\]
\end{defn}
In the case of a quiver of finite type, Caldero and Chapoton (\cite{caldero-chapoton}) described a composition of cluster transformations in terms of quiver Grassmannians of the original quiver.
This result is generalized by many people (see the references in \cite{plamondon} for example).
Finally, Derksen-Weyman-Zelevinsky and Plamondon (\cite{DWZ2,plamondon}) provided the Caldero-Chapoton type formula for an arbitrary quiver without loops and oriented $2$-cycles.
In this paper, we provide an alternative proof of the Caldero-Chapoton type formula 
under the assumption that there is a potential $W$ such that the QP $(Q,W)$ is successively f-mutatable with respect to the sequence $\kk$ (\S \ref{subsub_pmutation}).

We identify $\C(x_1,\ldots,x_n)$ with the fractional field of $T_{Q,+}$. 
We will omit ``$+$'' in the notations.
\begin{thm}\textup{(Caldero-Chapoton type formula)} \label{thm_02}
We have
\begin{equation}\label{eq_CC}
\mathrm{FZ}_{\mathbf{k},i}(\underline{x})=
x_{\kk,i}\cdot \Biggl(\sum_\mathbf{v}e\Bigr(\mathrm{Grass}(\mathbf{k};i,\mathbf{v})\Bigr)\cdot\mathbf{y}^{-\mathbf{v}}\Biggr).
\end{equation}
where $(\underline{y})^{-\mathbf{v}}=\prod_j (y_j)^{-v_j}$ and $y_j=\prod_i(x_i)^{\bar{Q}(i,j)}$.
\end{thm}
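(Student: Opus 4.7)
The strategy is to reduce Theorem \ref{thm_02} to Theorem \ref{thm_01} specialised to the ``$+$'' sign (topological Euler characteristic). Under the identification $\C(x_1,\ldots,x_n)=\mr{Frac}(T_{Q,+})$, the right-hand side of (\ref{eq_CC}) is by construction the image of $x_{\kk,i,+}$ under the automorphism $\mr{Ad}_{\mca{T}_\kk[-1],+}$ produced by Theorem \ref{thm_01}. Hence it suffices to establish
\[
\mr{FZ}_{\kk,i}(\underline{x}) \;=\; \mr{Ad}_{\mca{T}_\kk[-1],+}(x_{\kk,i,+}).
\]

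First I would treat the base case $\kk=(k)$. Formula (\ref{eq_preCT}) gives $\mr{Ad}_{\mca{T}_{(k)}[-1],+}(x_{(k),k,+})=x_{(k),k,+}(1+y_{k,+}^{-1})$ and fixes the remaining generators. Combining this with the torus identification coming from the Keller-Yang derived equivalence at $k$---under which $[s_k^{\mr{new}}]=-[s_k]$ in $K_0$, so that $x_{(k),k,+}$ is expressed as a prescribed Laurent monomial in the original $x_{i,+}$---and using $y_{k,+}=\prod_i x_{i,+}^{\bar{Q}(i,k)}$, one recovers the Fomin-Zelevinsky formula (\ref{eq_cluster}) after invoking the no-$2$-cycle hypothesis to split the sum $\prod_i u_i^{Q(i,k)}+\prod_i u_i^{Q(k,i)}$ into its two monomial pieces. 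For $i\neq k$ the map is the identity, matching the fact that cluster mutation at $k$ leaves the other variables unchanged.

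For a general $\kk=(k_1,\ldots,k_l)$ I would proceed by induction on $l$. Writing $\kk'=(k_1,\ldots,k_{l-1})$, the composite cluster transformation factors as the length-$(l-1)$ transformation followed by a single mutation at $k_l$ of the intermediate seed $(Q_{\kk'}\mid\mr{FZ}_{\kk',\bullet})$. The base case, applied to the intermediate QP $\mu_{\kk'}(Q,W)$ (which is f-mutatable at $k_l$ by the successive f-mutatability hypothesis), identifies the final mutation with $\mr{Ad}_{\mca{T}_{(k_l)}[-1],+}$ for that intermediate quiver; the inductive hypothesis identifies the first $l-1$ mutations with $\mr{Ad}_{\mca{T}_{\kk'}[-1],+}$; and functoriality of the Keller-Yang derived equivalences identifies the composition with $\mr{Ad}_{\mca{T}_\kk[-1],+}$. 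Theorem \ref{thm_01} then delivers the Grassmannian formula (\ref{eq_CC}).

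The main obstacle will be the matching in the inductive step: cluster mutation in (\ref{eq_cluster}) takes place in the cluster algebra of the intermediate quiver $Q_{\kk'}$, whereas the corresponding $\mr{Ad}$-map comes from the derived equivalence $\dfa_{\kk'}\simeq\dfa_\kk$, so one must verify that the cluster-theoretic and categorical torus identifications at each mutation step agree. This is precisely where the successive f-mutatability hypothesis enters, ensuring that every intermediate QP lies in the setting where Theorem \ref{thm_01} applies and the $\mr{Ad}$-maps are simultaneously defined, composable, and compatible under the chain of derived equivalences.
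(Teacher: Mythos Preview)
Your overall architecture matches the paper's: identify the right-hand side of \eqref{eq_CC} with $\mr{Ad}_{\mca{T}_\kk[-1],+}(x_{\kk,i,+})$, then show that this automorphism is the composite of the single-step cluster transformations. The base case you give is exactly Example~\ref{ex_cluster}. The paper likewise reduces everything to Theorems~\ref{thm_521} and~\ref{thm_factor2}: the first says $\widehat{\mr{Ad}}_{\mca{T}_\kk[-1],+}(x_{\kk,i,+})$ is the Grassmannian generating series, the second is the factorisation
\[
{\mr{Ad}}_{\mca{T}_\kk[-1],+}=
\bigl({\mr{Ad}}_{\mca{S}(1)[-1],+}\bigr)^{\ep(1)}
\circ\cdots\circ
\bigl({\mr{Ad}}_{\mca{S}(l)[-1],+}\bigr)^{\ep(l)},
\]
and then each factor is a Fomin--Zelevinsky mutation.

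The gap in your induction is precisely this factorisation. You invoke ``functoriality of the Keller--Yang derived equivalences'' to conclude that $\mr{Ad}_{\mca{T}_{\kk'}[-1],+}$ composed with the single-step $\mr{Ad}$ at $k_l$ equals $\mr{Ad}_{\mca{T}_\kk[-1],+}$. But the Keller--Yang equivalences only give you compatibility of the lattice identifications and hence of the monomial parts $x_{\kk,i}$, $y_{\kk,i}$; they say nothing about the Grassmannian generating functions, which is how $\mr{Ad}_{\mca{T}_\kk[-1],+}$ is \emph{defined} in \S\ref{subsub_trans}. Nor does Theorem~\ref{thm_01} help: the commutative square there relates $\mr{Ad}_{\mca{T}_\kk}$ and $\mr{Ad}_{\mca{T}_\kk[-1]}$ through $\mca{DT}_J$ and $\mca{DT}_{J_\kk}$, and combining the squares for $\kk$, $\kk'$ and the final step still leaves $\mca{DT}_J$ sandwiched in the middle, so one cannot cancel to extract the desired identity of $\mr{Ad}$'s. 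In the paper the factorisation is obtained by an independent Hall-algebra argument: one first proves the motivic factorisation identity $\mca{M}_{\mathcal{T}_{\kk}}=(\mca{M}_{\mathcal{S}{(1)}})^{\ep(1)}*\cdots *(\mca{M}_{\mathcal{S}{(l)}})^{\ep(l)}$ (Proposition~\ref{prop_mfi}), and then pushes it through the Poisson integration map $I_+$ of Theorem~\ref{thm_integral} (this is the content of Lemma~\ref{lem_factor} and the computation following Theorem~\ref{thm_factor2}).

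A second point your induction does not address is the signs $\ep(r)$. Naively appending one mutation gives a composition with ${\mr{Ad}}_{\mca{S}(l)[-1],+}$, but the correct factor is $({\mr{Ad}}_{\mca{S}(l)[-1],+})^{\ep(l)}$, with $\ep(l)$ determined by whether $\Phi_{\kk'}^{-1}(s_{\kk',k_l})$ lies in $\mca{F}_{\kk'}$ or in $\mca{T}_{\kk'}[-1]$ (Theorem~\ref{thm_tilting}). That cluster mutation is an involution is what makes the two cases match on the Fomin--Zelevinsky side, but on the categorical side this has to be checked, and it is again built into the Hall-algebra proof rather than following from Theorem~\ref{thm_01}.
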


\subsection*{Application to cluster algebras}
In \cite{DWZ2,plamondon}, they prove six conjectures given in \cite{fomin-zelevinsky4} for cluster algebras associated to quivers \footnote{Cluster algebras are associated not only with quivers without loops and oriented 2-cycles (equivalently, with skew-symmetric integer matrices)  but also with {\it skew-symmetrizable} matrices,}. 
In \S \ref{subsec_F_and_g} and \S \ref{subsec_g_to_F} we give alternative proofs for them under the assumption that the quiver with principal framing is successively f-mutatable.
\footnote{From the view points of applications to cluster algebras, the finite assumption is too strong. In this sense, our result on the Fomin-Zelevinsky conjectures is weaker than ones in \cite{DWZ2,plamondon}.}.

Let $Q\pf$ be the following quiver:
\begin{description}
\item{vertices} : $I\sqcup I^*$ where $I^*=\{1^*,\ldots,n^*\}$,
\item{arrows} : $\{\text{arrows in $Q$}\}\sqcup \{i^*\to i\mid i\in I\}$.
\end{description}
This is called the quiver with the principal framing associated to $Q$. 
Let us use $\{X_i\}$ and $\{Y_i\}$ for generators of the tori associated to $Q\pf$.
\begin{defn}
\begin{itemize}
\item [(1)]The $F$-polynomial associated to $(Q,W)$, $\kk$ and $i$ is the following :
\[
F_{\kk,i}(\underline{y}):={FZ}\pf_{\kk,i}(\underline{X})|_{X_i=1,X_{i^*}=y_i}.
\]
\item[(2)] 
The $g$-vector $g_{\kk,i}\in M_Q$ associated to $(Q,W)$, $\kk$ and $i$ is the element which is characterized by the following identity :
\[
\mathrm{FZ}_{\mathbf{k},i}(\underline{x})=
\mathbf{x}^{g_{\kk,i}}
\cdot\mr{F}_{\kk,i}(\underline{y}^{-1})
\]
where the last term is given by substituting $y_i^{-1}$ to $y_i$.
\end{itemize}
\end{defn}
\begin{rem}
It is $y_i^{-1}$ in our notation what is denoted by $y_i$ in Fomin-Zelevinsky's notation. 
We use this notation since $y_i$ corresponds to the simple module in our notation.
\end{rem}

The potential $W$ of $Q$ can be taken as a potential of $Q\pf$. 
We {\it assume} that $(Q\pf,W)$ is successively f-mutatable with respect to the sequence $\kk$.

We will apply an argument similar to the one in \S \ref{sec_proof}, for $(Q\pf,W)$.
Then we get descriptions of $g$-vectors and $F$-polynomials in terms of the $3$-Calabi-Yau category : 


\vspace{3mm}

{\renewcommand\arraystretch{1.5}
\begin{tabular}{c|p{0.5\textwidth}}
cluster algebra & \hspace{20mm}DT theory\\\hline\hline
$y$-variable $y_i$ & formal variable corresponding to the simple module $s_i$\\\hline
$x$-variable $x_i$ & formal variables corresponding to the projective module $P_i$ (or $\Gamma_i$)\\ \hline
$F$-polynomial & generating function of the Euler characteristics of the quiver Grassmannians \\\hline
$g$-vector & $\phi_{\kk}^{-1}([\Gamma_{\kk,i}])\in M_Q=K_0(\mr{per}\Gamma)\simeq \Z^I$
\\\hline
${}^tg$-vector & $\phi_{\kk}([s_i])\in L_{Q_\kk}\simeq \Z^I$
\\\hline
$c$-vector & $\phi_{\kk}^{-1}([s_{\kk,i}])\in L_{Q}\simeq \Z^I$
\\\hline
sign coherence of ${}^tg$-vectors & $s_i\in \mca{T}_\kk\subset \mca{A}_\kk[1]$ or $s_i\in \mca{F}_\kk\subset \mca{A}_\kk$\\\hline
sign coherence of $c$-vectors & $s_{\kk,i}\in \mca{T}_\kk[-1]\subset \mca{A}[-1]$ or $s_{\kk,i}\in \mca{F}_\kk\subset \mca{A}$\\\hline
$g$-vectors determine $F$-vectors & 
Bridgeland stability on walls
\end{tabular}}

\vspace{3mm}

\subsection*{Contents}
From \S \ref{sec_1} to \S \ref{subsec_stab}, we study some categorical properties of the $3$-dimensional Calabi-Yau category associated to a quiver with a potential. 
The statements of our main results appear in \S \ref{sec_statement}. 

We prove the theorems using motivic Hall algebra, on which we give a brief review in \S \ref{sec_MH}.
For the proof, first, we show in \S \ref{subsec_eq_in_MH} some identities on the motivic Hall algebra using the results from \S \ref{sec_1} to \S \ref{subsec_stab}.
They are translated in \S \ref{subsec_auto} into the main results via the integration map.

Finally, we study quivers with principal framings to provide alternative proofs for the six conjectures given in \cite{fomin-zelevinsky4} (\S \ref{sec_F_and_g}).

\subsection*{Comments}
\noindent (1) 
Throughout this paper, we assume that all the potentials are finite.
As we mentioned, from the view points of applications to cluster algebras, we would like to remove the assumption.
If we take an infinite potential, then the moduli spaces will not be schemes (or stacks) but formal schemes (or stacks).
Once we construct a theory of the motivic Hall algebra 
in the formal setting, we can apply all the arguments in this paper.

\smallskip

\noindent (2) 
A typical example of a finite potential is a potential associated to a triangulated surface \cite{QP-surface}.
We will apply the results in this paper for a triangulated surface in \cite{DT_surface}.

\smallskip

\noindent (3)
It is expected that there is a refinement of the DT theory, which is called the {\it motivic DT theory} (\cite{ks,behrend_bryan_szendroi}). 
Wall-crossing phenomena of the motivic DT theory has been studied in \cite{ks,WC_motivic}.
We hope to study quantum cluster algebras from the view point of motivic DT theory in the future.

\subsection*{Acknowledgement}
I would like to express my gratitude for all of the following mathematicians; 
Bernhard Keller who patiently explained many things about the cluster categories and the cluster algebras, indicated many stupid mistakes in the very preliminary version of this paper;
Tom Bridgeland who showed me the preliminary version of his paper \cite{bridgeland-hall} and gave me a lot of helpful comments and encouragement. In particular, the proof of Theorem \ref{thm_tilting} is due to him; 
Pierre-Guy Plamondon who kindly explained the results in his PhD thesis \cite{plamondon};
Hiraku Nakajima who explained me his results in \cite{nakajima-cluster} and encouraged me to promote the result of \cite{ks};
Bernard Leclerc who recommended me to give alternative proofs for the conjectures in \cite{fomin-zelevinsky4};
Andrei Zelevinsky who gave me some comments on the preliminary version of this paper.

The first version of this paper was written while I have been visiting the University of Oxford.
I am grateful to Dominic Joyce for the invitation and to the Mathematical Institute for hospitality. 

The author is supported by the Grant-in-Aid for Research Activity Start-up (No. 22840023) and for Scientific Research (S) (No. 22224001).

\section{Preliminary}\label{sec_0}
\subsection{QP, dga and Jacobi algebra}
A quiver with a potential (QP, in short) is a pair $(Q,W)$ of a quiver $Q$ and a potential $W$, a linear combination of oriented cycles.
We say that $W$ (or $(Q,W)$) is {\it finite} when $W$ is a {finite} linear combination of oriented cycles.  
In this paper, we always assume that a QP is finite.

First, we define the derivation of the potential.
For an arrow $a$ and a oriented cycle $a_1\cdots a_l$, we put
\[
\partial_a(a_1\cdots a_l)
:=
\sum_i\delta_{a,a_i}a_{i+1}\cdots a_la_1\cdots a_{i-1}.
\]
For an arrow $a$ and a potential $W$, we define the derivation $\partial_aW$ by the linear combination of the derivations of the oriented cycles.

For a QP $(Q,W)$, we define Ginzburg's differential graded algebra $\Gamma=\Gamma_{Q,W}$.
As a graded algebra, $\Gamma_{Q,W}$ is given by the path algebra $\mathbb{C}\hat{Q}$ of the following 
graded quiver $\hat{Q}$. The vertex set of $\hat{Q}$ is the same as $Q$ and the arrow set is the union of the following three sets :
\begin{itemize}
\item arrows in $Q$ (degree $0$),
\item opposite arrow $a^*$ for each arrow $a$ in $Q$ (degree $-1$),
\item loop $t_i$ at $i$ for each vertex $i$ in $Q$ (degree $-2$).
\end{itemize}
We define the differential $d=d_W$ of degree $1$ on the path algebra $\mathbb{C}\hat{Q}$ as follows：
\begin{itemize}
\item $da=0$ for any arrow $a$ in $Q$，
\item $d(a^*) = \partial_aW$ for any arrow $a$ in $Q$，and 
\item $d(t_i) = e_i\left(\prod_{a}[a,a^*]\right)e_i$ for any vertex $i$ in $Q$.
\end{itemize}
\begin{defn}
\begin{itemize}
\item[\textup{(1)}] The differential graded algebra $\Gamma_{Q,W}=(\mathbb{C}\hat{Q},d_W)$ is called the {\it Ginzburg differential graded algebra} (dga, in short).
\item[\textup{(2)}] The algebra $J=J_{Q,W}:=H^0\Gamma_{Q,W}$ is called the {\it Jacobi algebra}.
\end{itemize}
\end{defn}
The Jacobi algebra can be described as the quiver with the relations :
\[
J_{Q,W}=\mathbb{C}Q/\langle\partial_a W;a\in Q_1\rangle.
\]

\subsection{Quiver mutation}\label{qmutation}
In this paper, we always assume that a quiver has
\begin{itemize}
\item the vertex set $I=\{1,\ldots,n\}$, and
\item no loops and oriented $2$-cycles.
\end{itemize}
For vertices $i$ and $j\in I$, we put
\[
Q(i,j)=\sharp\{\text{arrows from $i$ to $j$}\},\quad \bar{Q}(i,j)=Q(i,j)-Q(j,i).
\]
Note that the quiver $Q$ is determined by the matrix $\bar{Q}(i,j)$ under the assumption above.

For the vertex $k$, we define the new quiver $\mu_kQ$ as follows :
\begin{itemize}
\item
First, we define a new quiver $\mu_k^\mr{pre}{Q}$ as follows ：
\begin{itemize}
\item 
For any subquiver $u\overset{\alpha}{\to} k\overset{\beta}{\to}v$,
we associate a new arrow $[\beta\alpha]\colon u \to v$.
\item replace any arrow $a$ incident to the vertex $k$ with an opposite arrow $a^*$.
\end{itemize}
\item
Remove all oriented cycles of length $2$ in $\mu_k^\mr{pre}{Q}$.
\end{itemize}

\subsection{QP mutation}
\subsubsection{Reduced part of a potential}
Let $\widehat{\mathbb{C}Q}$ be the completion of $\mathbb{C}Q$ with respect to path lengths.

A potential of $Q$ is an element in $\widehat{\mb{C}Q}$ which is described as a linear combination of oriented cycles in $Q$.
We identify two potentials which are related via rotations of oriented cycles.  
A potential is said to be finite if it is an element in $\mathbb{C}Q$.

Two QP $(Q,W)$ and $(Q',W')$ are said to be right equivalent, which is denoted by $(Q,W)\sim (Q',W')$, if
there exists an algebra isomorphism $\psi$ between $\widehat{\mathbb{C}Q}$ and $\widehat{\mathbb{C}Q'}$ so that $\psi(W)=W'$. 
Two finite QP $(Q,W)$ and $(Q',W')$ are said to be right f-equivalent, which is denoted by $(Q,W)\overset{\mr{fin.}}{\sim} (Q',W')$, if
there exists an algebra isomorphism $\psi$ between ${\mathbb{C}Q}$ and ${\mathbb{C}Q'}$ so that $\psi(W)=W'$.

A potential is said to be reduced if it has no oriented cycles of length less than $3$, 
and said to be trivial if its Jacobi algebra is trivial.
For quivers $Q$ and $Q'$ with the same vertex set, let $Q\cup Q'$ denote the quiver given by taking union the arrow sets.
For QPs $(Q,W)$ and $(Q',W')$ with the same vertex set, we take $W$ and $W'$ as potentials of $Q\cup Q'$ and let $(Q,W)\oplus(Q',W')$ denote the new QP $(Q\cup Q',W+W')$.．

For any QP $(Q,W)$, we have a right equivalence
\[
(Q,W)
\sim 
(Q,W)_\mr{red}
\oplus
(Q,W)_\mr{triv}
\]
with reduced $W_\mr{red}$ and trivial ${W_\mr{triv}}$ (\cite[Lemma 4.6]{quiver-with-potentials})．
Moreover, $(Q_\mr{red},W_\mr{red})$ and $(Q_\mr{triv},W_\mr{triv})$ are determined uniquely up to right equivalences．
We call $(Q_\mr{red},W_\mr{red})$ as the reduced part of $(Q,W)$.

A finite QP $(Q,W)$ is said to be f-reducible if we have a right f-equivalence
\[
(Q,W)
\overset{\mr{fin.}}{\simeq} 
(Q_\mr{red},W_\mr{red})
\oplus
(Q_\mr{triv},W_\mr{triv})
\]
with finite reduced $(Q_\mr{red},W_\mr{red})$ and finite trivial $(Q_\mr{triv},W_\mr{triv})$.

\subsubsection{Potential mutation}\label{subsub_pmutation}
For a QP $(Q,W)$ and a vertex $k$, 
we define the potential $\mu_k^\mr{pre}{W}$ of the quiver $\mu_k^\mr{pre}Q$ by
\[
\mu_k^\mr{pre}{W}:=[W]+\Delta
\]
where
\begin{itemize}
\item
$[W]$ is the potential which is obtained from $W$ by replacing all the composition $u\overset{\alpha}{\to} k\overset{\beta}{\to}v$ with $[\beta\alpha]$, and
\item
$\Delta:=\sum \alpha^*\beta^*[\beta\alpha]$.
\end{itemize}
The mutation $\mu_k(Q,W)$ of the QP $(Q,W)$ at $k$ is the reduced part  
$(\mu_k^\mr{pre}{Q},\mu_k^\mr{pre}{W})_\mr{red}$
of $(\mu_k^\mr{pre}{Q},\mu_k^\mr{pre}{W})$.
\begin{defn}
\begin{itemize}
\item[(1)]
We say that a QP $(Q,W)$ is mutatable at $k$ 
if the underlying quiver of $\mu_k(Q,W)$ is $\mu_kQ$, the mutation of the quiver defined in \S \ref{qmutation}.
\item[(2)]
We say that a finite QP $(Q,W)$ is f-mutatable at $k$ if it is mutatable and $(\mu_k^\mr{pre}{Q},\mu_k^\mr{pre}{W})$ is f-reducible.
\end{itemize}
\end{defn}
Let $\kk=(k_1,\ldots,k_l)$ be a sequence of vertices. 
A finite QP $(Q,W)$ is said to be successively f-mutatable with respect to the sequence $\kk$ 
if 
\[
\mu_{k_{s-1}}(\cdots (\mu_{k_1}(Q,W))\cdots)
\]
is f-mutatable at $k_s$.


\section{Derived categories}\label{sec_1}
\subsection{Categories}\label{subsubsec_111}
For a QP $(Q,W)$, 
we have the following triangulated categories :
\begin{description}
\item{$\da$ : }the derived category of right dg-modules over Ginzburg dga $\Gamma$,
\item{$\pera$ : }the smallest full subcategory of $\da$ containing $\Gamma$ and closed under extensions, shifts and direct summands,
\item{$\dfa$ : }the full subcategory of $\da$ consisting of dg-modules with finite dimensional cohomologies.
\end{description}

The triangulated categories $\da$ and $\dfa$ have the canonical t-structures whose cores are 
\begin{description}
\item{$\moda$ : }the category of finitely generated right modules over the (non-complete) Jacobi algebra, and
\item{$\modfa$ : }the full subcategory of $\moda$ consisting of finite dimensional modules
\end{description}
respectively. 
For a vertex $i\in I$, we have the following objects:
\begin{description}
\item{$s_i$ : }the simple $J$-module,
\item{$\Gamma_i:=e_i\Gamma$ : }the $\Gamma$-module , which is a direct summand of $\Gamma$, and 
\item{$P_i:=H^0_{\moda}(\Gamma_i)$ : }the projective indecomposable $J$-module.
\end{description}
Here $e_i$ is the idempotent.

\subsection{Grothendieck groups}\label{subsec_Gro}
We put $M=M_Q:=K_0(\pera)$ and $L=L_Q:=\mathbb{Z}^{I}$, where $L$ is taken as the target of the map 
\[
K_0(\dfa)\to \mathbb{Z}^{I}=L
\]
defined by $[E]\mapsto \underline{\mathrm{dim}}(E)$.
With a slight abuse of notations, we will write $[E]\in L$ instead of $\underline{\mathrm{dim}}(E)$.

We put $M_{\R}=M_{Q,\R}:=M_Q\otimes \R$ and $L_\R=L_{Q,\R}:=L_Q\otimes \R$.
Let $\chi$ denote the Euler pairing $L\times L\to \Z$ given by
\[
\chi([E],[F])=\sum_i(-1)^i\dim\mathrm{Hom}(E,F[i]).
\]
We put $\mathbf{w}_i:=[\Gamma_i]$ and $\mathbf{v}_i:=[s_i]$.
The set $\{\mathbf{w}_i\}$ forms a basis of $M$ and the set $\{\mathbf{v}_i\}$ forms a basis of $L$. 
We extend $\chi$ on $L\otimes M$ by
\[
\chi(\mathbf{w}_i,\mathbf{v}_j):=\delta_{i,j},\quad \chi(\mathbf{w},\mathbf{w'})=0
\]
for any $\mathbf{w}, \mathbf{w}'\in M$. 
This gives $M_{Q,\R}\simeq (L_{Q,\R})^*$.

\subsection{Tori}\label{subsub_torus} 
Let $\sigma$ be a sign; $\sigma=\pm$.
We define $\mathrm{T}^\vee_{Q,\sigma}$, $\mathrm{T}_{Q,\sigma}$ and $\mathbb{T}_{Q,\sigma}$ by 
\[
\begin{array}{c}
\mathrm{T}_{Q,\sigma}^\vee:=\bigoplus_{\mathbf{w}\in M}\C\cdot \mathbf{x}^{\mathbf{w}}_{\sigma},\quad
\mathrm{T}_{Q,\sigma}:=\bigoplus_{\mathbf{v}\in L}\C\cdot \mathbf{y}^{\mathbf{v}}_{\sigma},\vspace{5pt}\\
\mathbb{T}_{Q,\sigma}:=\mathrm{T}_{Q,\sigma}^\vee\otimes\mathrm{T}_{Q,\sigma},
\end{array}
\]
with the following products:
\[
\mathbf{x}^{\mathbf{w}}_{\sigma}\cdot \mathbf{x}_{\sigma}^{\mathbf{w'}}=\mathbf{x}^{\mathbf{w+w'}}_{\sigma}, \quad
\mathbf{y}^{\mathbf{v}}_{\sigma}\cdot \mathbf{y}^{\mathbf{v'}}_{\sigma}=\sigma^{\chi(\mathbf{v},\mathbf{v'})}\mathbf{y}_{\sigma}^{\mathbf{v+v'}}, \quad 
\mathbf{x}^{\mathbf{w}}_{\sigma} \cdot \mathbf{y}^{\mathbf{v}}_{\sigma} =
\mathbf{y}^{\mathbf{v}}_{\sigma} \cdot \mathbf{x}^{\mathbf{w}}_{\sigma}
\]
where we identify $\sigma$ with $\pm 1$.
We put $x_{i,\sigma}:=\mathbf{x}^{[\Gamma_i]}_\sigma$ and $y_{i,\sigma}:=\mathbf{y}^{[s_i]}_\sigma$, then we have
\[
\begin{array}{c}
\mathrm{T}_{Q,\sigma}^\vee=\C[x_{1,\sigma}^\pm,\ldots,x_{n,\sigma}^\pm],\quad
\mathrm{T}_{Q,\sigma}=\C[y_{1,\sigma}^\pm,\ldots,y_{n,\sigma}^\pm],\vspace{5pt}\\
\mathbb{T}_{Q,\sigma}=\C[x_{1,\sigma}^\pm,\ldots,x_{n,\sigma}^\pm,y_{1,\sigma}^\pm,\ldots,y_{n,\sigma}^\pm].
\end{array}
\]
They are called the semiclassical limits of {\it quantum torus}, {\it quantum dual torus} and {\it quantum double torus} respectively \footnote{Since $\mr{Spec}$ of them are algebraic tori, we call them tori with a slight abuse.}.
We define the surjective algebra homomorphism $\pi_\sigma\colon \mathbb{T}_{Q,\sigma}\twoheadrightarrow \mathrm{T}_{Q,\sigma}^\vee$ by
\begin{equation}\label{eq_pi}
x_{i,\sigma} \otimes 1  \longmapsto  x_{i,\sigma},\quad 1 \otimes y_{i,\sigma}  \longmapsto  \mathbf{x}_\sigma^{[s_i]}.
\end{equation}
The kernel of $\pi_\sigma$ is generated by $\{(\mathbf{x}_\sigma^{[s_i]}\otimes 1) - (1\otimes y_{i,\sigma})\mid i \in I\}$.
We sometimes identify an element in $\T_{Q,\sigma}$ with its image in $\T_{Q,\sigma}^\vee$ under the composition
\[
\T_{Q,\sigma} \hookrightarrow \mathbb{T}_{Q,\sigma} \overset{\pi_\sigma}{\twoheadrightarrow}\T_{Q,\sigma}^\vee.
\]
Let $\Sigma$ denote the automorphism of the tori given by 
\[
\Sigma(\mathbf{x}_\sigma^\mathbf{w})=\mathbf{x}_\sigma^{-\mathbf{w}},\quad
\Sigma(\mathbf{y}_\sigma^\mathbf{v})=\mathbf{y}_\sigma^{-\mathbf{v}}.
\]

\subsection{Mutation and derived equivalence}
\subsubsection{Derived equivalence}\label{subsub_derived_equiv}
Let $(Q,W)$ be a finite QP which is f-mutatable at a vertex $k$.
Let $\mu_k\Gamma$ be the Ginzburg dga associated to the mutation $\mu_k(Q,W)$.
\begin{thm}[\protect{\cite[Theorem 3.2]{dong-keller}, \cite[\S 7.6]{keller-completion}}]\label{thm_KY}
There exist equivalences of triangulated categories 
\[
\Phi_{k,+},\Phi_{k,-}\ \colon \da\overset{\sim}{\longrightarrow} \mathcal{D}(\mu_k\Gamma)\]
such that 
\begin{itemize}
\item $\Phi_{k,\pm}^{-1}(\Gamma_i')=\Gamma_i$ for $i\neq k$, and
\item 
$\Phi_{k,+}^{-1}(\Gamma_k')$ and  $\Phi_{k,-}^{-1}(\Gamma_k')$ are involved in the following triangles :
\begin{equation*}
\begin{array}{ccccccc}
\Phi_{k,+}^{-1}(\Gamma'_k)[-1]&\to& \bigoplus_{j}\Gamma_{j}^{\oplus Q(k,j)}&\to& \Gamma_k&\to &\Phi_{k,+}^{-1}(\Gamma'_k),\vspace{1.5mm}
\\
\Phi_{k,-}^{-1}(\Gamma'_k)&\to& \Gamma_k&\to & \bigoplus_{j}\Gamma_{j}^{\oplus Q(j,k)} &\to& \Phi_{k,-}^{-1}(\Gamma'_k)[1]
\end{array}\end{equation*}
\end{itemize}
where $\Gamma_j'$ is the direct summand of $\mu_k\Gamma$.
Moreover, $\Phi_{k,\pm}$ restricts to equivalences from $\pera$ to $\mathrm{per}(\mu_k\Gamma)$ and from $\dfa$ to $\mathcal{D}^{\mathrm{fd}}(\mu_k\Gamma)$.
\end{thm}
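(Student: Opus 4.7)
The plan is to realize each $\Phi_{k,\pm}$ as the derived equivalence induced by a silting object in $\pera$ whose derived endomorphism dga is quasi-isomorphic to $\mu_k\Gamma$. For the ``$+$'' version, set
\[
T_+ := \Bigl(\bigoplus_{i\neq k}\Gamma_i\Bigr)\oplus T_{+,k},
\]
where $T_{+,k}$ is the cone of a minimal left $\bigl(\bigoplus_{j\neq k}\Gamma_j\bigr)$-approximation of $\Gamma_k$, so that it fits into a triangle
\[
\Gamma_k\to\bigoplus_j\Gamma_j^{\oplus Q(k,j)}\to T_{+,k}\to\Gamma_k[1].
\]
For the ``$-$'' version, take $T_{-,k}$ to be the cocone of the dual approximation. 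These are the candidates forced on us by the triangles in the statement of the theorem.

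First I would verify that $T_\pm$ is a silting object in $\pera$, i.e.\ $\Hom_{\da}(T_\pm,T_\pm[p])=0$ for $p>0$ and $T_\pm$ thickly generates $\pera$. The Ext-vanishing is checked summand by summand using the approximation triangles, the fact that $\Gamma$ itself is silting, and the $3$-Calabi--Yau property of $\dfa$ to convert checks of negative Ext into checks of positive Ext. Thick generation is immediate, since $\Gamma_k$ fits in a triangle involving $T_{\pm,k}$ and the other $\Gamma_j$'s, so $T_\pm$ and $\Gamma$ generate the same thick subcategory of $\da$.

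Next I would compute the derived endomorphism dga $\mathcal{B}_\pm:=\mr{REnd}_{\da}(T_\pm)$ and identify it with $\mu_k\Gamma$. At the level of cohomology, the arrows of $\mu_kQ$ correspond to morphisms between summands of $T_\pm$: the reversed arrows $a^*$ incident to $k$ come from applying $\Hom(-,\Gamma_j)$ or $\Hom(\Gamma_j,-)$ to the defining triangle, while the composed arrows $[\beta\alpha]$ come from morphisms factoring through $T_{\pm,k}$. The elimination of oriented $2$-cycles in the definition of $\mu_kQ$ matches the reduced-part operation for QPs, and this is where the f-mutatability hypothesis enters: it guarantees that the relevant right equivalence can be performed inside the finite (non-completed) path algebra, so the whole construction stays within the finite framework of the paper. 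One then identifies the differential on $\mathcal{B}_\pm$ with the Ginzburg differential associated to the mutated potential: the $[W]$-part of $\mu_k^{\mr{pre}}W$ reproduces $W$ after contracting paths through the vertex $k$, while $\Delta=\sum \alpha^*\beta^*[\beta\alpha]$ is recovered from the Massey/triple products forced by the approximation triangle, and the higher degree generators $t_i$ correspond to the $3$-Calabi--Yau duality pairing on $T_\pm$.

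Once $\mathcal{B}_\pm\simeq\mu_k\Gamma$ is established, Keller's Morita-type theorem for dg algebras produces an equivalence $\Phi_{k,\pm}^{-1}\colon\mathcal{D}(\mu_k\Gamma)\xrightarrow{\sim}\da$ sending each $\Gamma'_i$ to the corresponding summand of $T_\pm$, which is exactly the required property on summands. The restriction to $\pera$ follows from thick generation, and the restriction to $\dfa$ follows from preservation of compact objects together with the observation that the approximation triangle keeps cohomologies finite-dimensional. The hardest step is the explicit matching of the differential on $\mathcal{B}_\pm$ with that of $\mu_k\Gamma$: one must either compute the A-infinity structure on the Ext-algebra of $T_\pm$ and recognise it as coming from the potential $[W]+\Delta$, or, following Keller--Yang, factor the construction through a pre-mutated Ginzburg dga attached to $(\mu_k^{\mr{pre}}Q,\mu_k^{\mr{pre}}W)$ and then invoke the fact that replacing a potential by its reduced part induces a quasi-isomorphism of Ginzburg dgas.
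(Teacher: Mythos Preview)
The paper does not give a proof of this theorem at all: it is quoted as an external result from \cite{dong-keller} and \cite{keller-completion}, and the only comment the paper adds is the remark that it is $\Phi_{k,-}^{-1}$ which is constructed explicitly in \cite[Theorem 3.2]{dong-keller}. So there is nothing in the paper to compare your argument against line by line.

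That said, your sketch is a faithful high-level summary of the Keller--Yang strategy: build the silting object $T_\pm$ by approximation triangles, compute its derived endomorphism dga, identify it with the Ginzburg dga of the mutated QP (first the pre-mutated one, then pass to the reduced part), and invoke dg Morita theory. Two small comments. First, invoking the $3$-Calabi--Yau property to flip negative Ext into positive Ext is not quite the right tool here, since the $\Gamma_i$ live in $\pera$ rather than in $\dfa$; the vanishing $\Hom(\Gamma,\Gamma[p])=0$ for $p>0$ comes directly from the fact that $\Gamma$ is concentrated in non-positive degrees, and the silting property of $T_\pm$ is then checked from the approximation triangles without needing Serre duality. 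Second, the genuinely delicate step, as you note, is matching $\mr{REnd}(T_\pm)$ with $\mu_k\Gamma$; Keller--Yang do this not by computing an $A_\infty$-structure but by writing down an explicit dg bimodule and checking the quasi-isomorphism by hand, which is where the combinatorics of $[W]+\Delta$ enters. Your suggestion to factor through the pre-mutated Ginzburg dga and then reduce is exactly their route.
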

\begin{rem}
It is $\Phi_{k,-}^{-1}$ that is studied in \cite[Theorem 3.2]{dong-keller}.
\end{rem}

The equivalences induce isomorphisms
\[
\phi_{k,\pm}\colon M_Q \overset{\sim}{\longrightarrow} M_{\mu_kQ}
\]
and
\[
\phi_{k,\pm}\colon L_Q \overset{\sim}{\longrightarrow} L_{\mu_kQ}.
\]
By the triangles in Theorem \ref{thm_KY} we have 
\begin{equation}\label{eq_yy'}
\begin{array}{l}
\phi_{k,+}^{-1}([\Gamma_i'])=\begin{cases}
[\Gamma_i] & i\neq k, \\
-[\Gamma_k]+\sum_j Q(k,j)[\Gamma_j] & i=k,
\end{cases}\\
\phi_{k,-}^{-1}([\Gamma_i'])=\begin{cases}
[\Gamma_i] & i\neq k, \\
-[\Gamma_k]+\sum_j Q(j,k)[\Gamma_j] & i=k
\end{cases}
\end{array}
\end{equation}
in $M_Q$. 
Since $\phi_{k,\pm}$ preserves $\chi$ we have
\begin{equation}\label{eq_phi_for_s}
\begin{array}{l}
\phi_{k,+}^{-1}([s_i'])=
\begin{cases}
[s_i]+ Q(k,i)[s_k] & i\neq k, \\
-[s_k] & i=k,
\end{cases}\\
\phi_{k,-}^{-1}([s_i'])=\begin{cases}
[s_i]+Q(i,k)[s_k] & i\neq k, \\
-[s_k] & i=k
\end{cases}
\end{array}
\end{equation}
in $L_Q$.
Note that $\phi_{k,\pm}$ also induce isomorphisms between $\mathrm{T}_{Q,\sigma}$ and $\mathrm{T}_{\mu_kQ,\sigma}$.
We sometimes identify them with each other and write simply $\mathrm{T}_{\sigma}$ since we do not want to specify a choice of a quiver.

\section{Tilting of t-structures}\label{sec_2}
\subsection{Torsion pair and tilting}
Let $\mca{D}$ be a triangulated category and $\mca{A}$ be the core of a t-structure. 
\begin{defn}
A pair $(\mca{T},\mca{F})$ of full subcategories of $\mca{A}$ is called a {\it torsion pair} if the following conditions are satisfied :  
\begin{itemize}
\item[(TP1)] 
for any $T\in\mca{T}$ and any $F\in\mca{F}$, we have $\Hom(T,F)=0$, 
\item[(TP2)] 
for any $X\in \mca{A}$, there exists an exact sequence 
\[
0\to T\to X\to F\to 0
\]
with $T\in\mca{T}$ and $F\in\mca{F}$. 
\end{itemize}
\end{defn}
We sometimes illustrate the torsion pair as in Figure \ref{fig_TP}. 
In the figure, we have no non-trivial morphism from an object on left to an object on right.
\begin{figure}[htbp]
  \centering
  \input{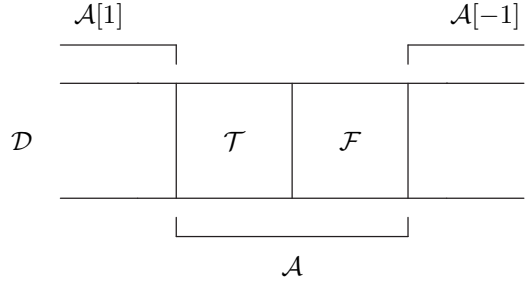}
  \caption{Torsion pair}
  \label{fig_TP}
\end{figure}

Given a torsion pair $(\mca{T},\mca{F})$, 
let $\mca{D}_{\leq -1}^{(\mca{F},\mca{T}[-1])}$ denote the full subcategory of $\mca{D}$ consisting of objects $E$ which satisfy
\[
H^i_{\mca{A}}(E)
\begin{cases}
\in \mca{T}& i=0,\\
= 0 & i \geq 1,
\end{cases}
\]
and let $\mca{D}_{\geq 0}^{(\mca{F},\mca{T}[-1])}$ denote the full subcategory of $\mca{D}$ consisting of objects $E$ which satisfy
\[
H^i_{\mca{A}}(E)
\begin{cases}
\in \mca{F} & i=0,\\
= 0 & i \leq -1.
\end{cases}
\]
Then the pair of full subcategories
\[
\left(\mca{D}_{\leq -1}^{(\mca{F},\mca{T}[-1])},\mca{D}_{\geq 0}^{(\mca{F},\mca{T}[-1])}\right)
\]
gives a t-structure of $\mca{D}$ (see Figure \ref{fig_TILTING}).
\begin{figure}[htbp]
  \centering
  \input{fig_tilting.tpc}
  \caption{Tilting with respect to $(\mca{T},\mca{F})$}
  \label{fig_TILTING}
\end{figure}
Let 
\[
\mca{A}^{(\mca{F},\mca{T}[-1])}:=
\mca{D}_{\leq -1}^{(\mca{F},\mca{T}[-1])}[-1]
\cap 
\mca{D}_{\geq 0}^{(\mca{F},\mca{T}[-1])}
\]
be the heart of the t-structure. 
That is, $\mca{A}^{(\mca{F},\mca{T}[-1])}$ is 
the full subcategory of $\mca{D}$ consisting of objects $E$ which satisfy
\[
H^i_{\mca{A}}(E)
\begin{cases}
\in \mca{F} & i=0,\\
\in \mca{T} & i=1,\\
= 0 & i \neq 0,1.
\end{cases}
\]

\subsection{Composition of tilting}
Let $(\mca{T},\mca{F})$ be a torsion pair of $\mca{A}$ and we put $\mca{A}':=\mca{A}^{(\mca{F},\mca{T}[-1])}$.
Let $(\mca{T}',\mca{F}')$ be a torsion pair of $\mca{A}'$  such that $\mca{T}'\subset \mca{F}$. We put $\mca{A}''=(\mca{A}')^{(\mca{F}',\mca{T}'[-1])}$.

Let $\mca{T}''$ denote the full subcategory of $\mca{A}$ consisting of $X$ with $F_X\in \mca{T}'$ where $F_X\in \mca{F}$ is the quotient object of $X$ associated to the exact sequence (TP2) for $(\mca{T},\mca{F})$.
Let $\mca{F}''$ denote the full subcategory of $\mca{F}$ consisting of $Y$ with $T_Y=0$ where $T_Y\in \mca{T}'$ is the subobject of $Y$ associated to the exact sequence (TP2) for $(\mca{T}',\mca{F}')$. (See Figure \ref{fig_lem23}.)
We can easily verify the following lemma.
\begin{figure}[htbp]
  \centering
  \input{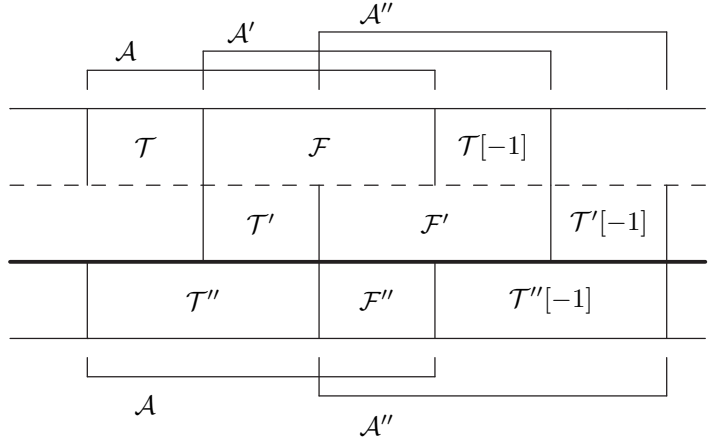}
  \caption{Composition of tilting (Lemma \ref{lem_23})}
  \label{fig_lem23}
\end{figure}
\begin{lem}\label{lem_23}
The pair of the full subcategories $(\mca{T}'',\mca{F}'')$ gives a torsion pair of $\mca{A}$ and 
\[
\mca{A}''
=
\mca{A}^{(\mca{F}'',\mca{T}''[-1])}
\]
\end{lem}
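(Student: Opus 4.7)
The plan is to prove the two claims of the lemma in sequence: first, that $(\mca{T}'',\mca{F}'')$ is indeed a torsion pair of $\mca{A}$, and then that the doubly tilted heart coincides with $\mca{A}^{(\mca{F}'',\mca{T}''[-1])}$. Throughout, the key technical input is the hypothesis $\mca{T}'\subset\mca{F}$, which ensures that the objects produced by the second torsion pair actually lie inside the original heart $\mca{A}$, so that both torsion pairs can be juggled simultaneously.

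For (TP1), let $X\in\mca{T}''$ and $Y\in\mca{F}''$. By the definition of $\mca{T}''$, there is a short exact sequence $0\to T_X\to X\to F_X\to 0$ in $\mca{A}$ with $T_X\in\mca{T}$ and $F_X\in\mca{T}'$, while $\mca{F}''\subset\mca{F}\cap\mca{F}'$ contains $Y$. Applying $\Hom(-,Y)$ and using $\Hom(T_X,Y)=0$ reduces the problem to showing $\Hom(F_X,Y)=0$; but $F_X,Y\in\mca{F}\subset\mca{A}\cap\mca{A}'$, so morphisms between them agree with those computed in $\mca{A}'$, and vanish by the torsion pair $(\mca{T}',\mca{F}')$. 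For (TP2), given $X\in\mca{A}$, first filter by $(\mca{T},\mca{F})$ to obtain $0\to T\to X\to F\to 0$ in $\mca{A}$ with $F\in\mca{F}\subset\mca{A}'$; then filter $F$ inside $\mca{A}'$ by $(\mca{T}',\mca{F}')$ to get $0\to T'\to F\to F'\to 0$ in $\mca{A}'$. Because $T'\in\mca{T}'\subset\mca{F}\subset\mca{A}$, the long exact $\mca{A}$-cohomology sequence for the triangle $T'\to F\to F'\to T'[1]$ forces $F'\in\mca{A}$ and shows that this second sequence is already exact in $\mca{A}$. Defining $T'':=X\times_F T'$, one obtains $0\to T\to T''\to T'\to 0$ and $0\to T''\to X\to F'\to 0$ in $\mca{A}$, which identifies $T\in\mca{T}$ as the $\mca{T}$-subobject and $T'\in\mca{T}'$ as the $\mca{F}$-quotient of $T''$, so $T''\in\mca{T}''$; and $F'\in\mca{F}\cap\mca{F}'=\mca{F}''$.

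For the identification of hearts, any $E\in\mca{A}''$ sits in an extension $F'\to E\to T'[-1]$ with $F'\in\mca{F}'$ and $T'\in\mca{T}'\subset\mca{F}\subset\mca{A}$. The long exact $\mca{A}$-cohomology sequence then concentrates $E$ in degrees $0$ and $1$: one has $H^0_\mca{A}(E)\cong H^0_\mca{A}(F')$, which lies in $\mca{F}'$ since torsion-free classes in $\mca{A}'$ are closed under subobjects and $H^0_\mca{A}(F')\hookrightarrow F'$ in $\mca{A}'$, hence in $\mca{F}\cap\mca{F}'=\mca{F}''$; and $H^1_\mca{A}(E)$ fits in an $\mca{A}$-exact sequence $0\to H^1_\mca{A}(F')\to H^1_\mca{A}(E)\to T'\to 0$ exhibiting $H^1_\mca{A}(F')\in\mca{T}$ as $\mca{T}$-subobject and $T'\in\mca{T}'$ as $\mca{F}$-quotient, so that $H^1_\mca{A}(E)\in\mca{T}''$. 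This shows $E\in\mca{A}^{(\mca{F}'',\mca{T}''[-1])}$, and the reverse inclusion is obtained symmetrically by filtering an object of the latter heart via its $\mca{A}$-cohomology. The main obstacle is this last cohomological bookkeeping: one must confirm that the two filtrations available in $\mca{A}'$ and in $\mca{A}$ are compatible, and this compatibility rests entirely on the inclusion $\mca{T}'\subset\mca{F}$, which controls the degrees in which cohomology can live.
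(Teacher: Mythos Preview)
Your proof is correct and supplies exactly the routine verification the paper omits: the paper states only ``We can easily verify the following lemma'' and gives no argument, so there is nothing to compare against beyond confirming that your direct check of (TP1), (TP2), and the two inclusions of hearts goes through. The one place worth tightening is the reverse inclusion $\mca{A}^{(\mca{F}'',\mca{T}''[-1])}\subset\mca{A}''$, which you describe only as ``symmetric'': concretely, given $E$ with $H^0_{\mca{A}}(E)\in\mca{F}''$ and $H^1_{\mca{A}}(E)\in\mca{T}''$, write $H^1_{\mca{A}}(E)$ as an extension $0\to T\to H^1_{\mca{A}}(E)\to T'\to 0$ with $T\in\mca{T}$ and $T'\in\mca{T}'$, observe that $T[-1]\in\mca{F}'$ (since $\Hom(\mca{T}',\mca{T}[-1])=\Ext^{-1}_{\mca{A}}(\mca{T}',\mca{T})=0$), and conclude that $H^0_{\mca{A}}(E)\in\mca{F}'\subset\mca{A}''$ and $H^1_{\mca{A}}(E)[-1]\in\mca{A}''$ as an extension of $T'[-1]$ by $T[-1]$, hence $E\in\mca{A}''$.
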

On the other hand, assume that $(\mca{T}',\mca{F}')$ is a torsion pair of $\mca{A}'$  such that $\mca{F}'\subset \mca{T}[-1]$.
Let $\mca{F}''$ denote the full subcategory of $\mca{A}$ consisting of $Y$ with $T_Y\in \mca{F}'[1]$ where $T_Y\in \mca{T}$ is the subobject of $Y$ associated to the exact sequence (TP2) for $(\mca{T},\mca{F})$.
Let $\mca{T}''$ denote the full subcategory of $\mca{T}$ consisting of $X$ with $F_X=0$ where $F_X\in \mca{F}'[1]$ is the quotient object of $X$ associated to the exact sequence (TP2) for $(\mca{T}'[1],\mca{F}'[1])$ (see Figure \ref{fig_lem24}). 
We put $\mca{A}'':=(\mca{A}')^{(\mca{F}'[1],\mca{T}')}$
We can also verify the following lemma. 
\begin{figure}[htbp]
  \centering
  \input{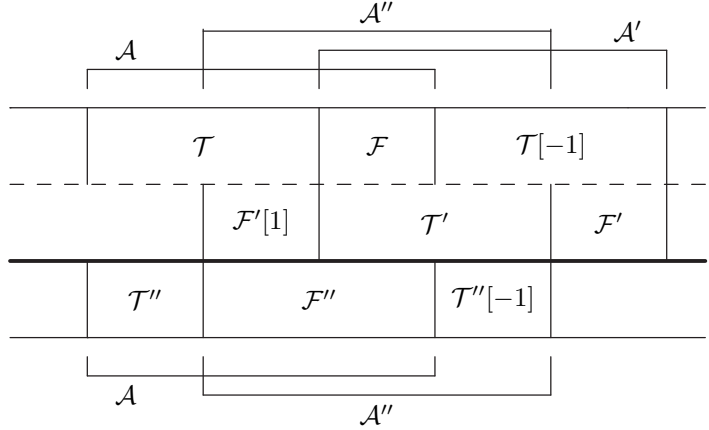}
  \caption{Composition of tilting (Lemma \ref{lem_24})}
  \label{fig_lem24}
\end{figure}
\begin{lem}\label{lem_24}
The pair of the full subcategories $(\mca{T}'',\mca{F}'')$ gives a torsion pair of $\mca{A}$ and 
\[
\mca{A}'':=\mca{A}^{(\mca{F}'',\mca{T}''[-1])}.
\]
\end{lem}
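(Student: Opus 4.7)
The plan is to mirror the proof of Lemma~\ref{lem_23}, dualised appropriately: first verify that $(\mca{T}'',\mca{F}'')$ is a torsion pair of $\mca{A}$, then match the resulting tilt with $\mca{A}''$ by comparing $\mca{A}$-cohomology. Before that, I need to confirm that the pair $(\mca{T}'[1],\mca{F}'[1])$ used in the definition of $\mca{T}''$ really does give a torsion pair of $\mca{T}$. This should follow from the hypothesis $\mca{F}'\subset\mca{T}[-1]$: because $\mca{T}[-1]$ is the torsion-free part of the torsion pair $(\mca{F},\mca{T}[-1])$ of $\mca{A}'$, it is closed under subobjects and extensions in $\mca{A}'$, so the $(\mca{T}',\mca{F}')$-decomposition of any object of $\mca{T}[-1]$ stays inside $\mca{T}[-1]$, inducing a torsion pair $(\mca{T}'\cap\mca{T}[-1],\mca{F}')$ on $\mca{T}[-1]$, which after shifting by $[1]$ becomes the required pair on $\mca{T}$.

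For (TP1), given $X\in\mca{T}''\subseteq\mca{T}$ and $Y\in\mca{F}''\subseteq\mca{A}$, the $(\mca{T},\mca{F})$-decomposition $0\to T_Y\to Y\to F_Y\to 0$ with $T_Y\in\mca{F}'[1]$ reduces $\Hom_{\mca{A}}(X,Y)=0$ to $\Hom_{\mca{A}}(X,T_Y)=0$; shifting by $[-1]$ turns the latter into a Hom inside $\mca{A}'$ between $X[-1]\in\mca{T}'$ and $T_Y[-1]\in\mca{F}'$, which vanishes by the $(\mca{T}',\mca{F}')$ torsion pair. For (TP2), I would first $(\mca{T},\mca{F})$-decompose $Z\in\mca{A}$ into $T_Z\in\mca{T}$ and $F_Z\in\mca{F}$, then apply the induced $(\mca{T}'[1],\mca{F}'[1])$-decomposition to $T_Z$ to extract a subobject $T''_Z\in\mca{T}''$ with quotient in $\mca{F}'[1]$, and finally splice the two short exact sequences via the snake lemma into $0\to T''_Z\to Z\to F''_Z\to 0$; the quotient $F''_Z$ is an extension of $F_Z$ by an object of $\mca{F}'[1]$, and its $(\mca{T},\mca{F})$-torsion part is visibly the $\mca{F}'[1]$-piece, so $F''_Z\in\mca{F}''$.

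For the heart equality, I would take $E\in\mca{A}''$ and use the defining triangle in $\mca{A}'$ presenting it as an extension of an object of $\mca{T}'$ by one in $\mca{F}'[1]$; computing $H^\bullet_{\mca{A}}(E)$ from the long exact sequence, using $\mca{F}'[1]\subset\mca{T}$ and the $(\mca{T},\mca{F})$-filtration of any object of $\mca{A}'$, shows that $E$ is concentrated in $\mca{A}$-degrees $0$ and $1$, with $H^0(E)$ an extension of some $\mca{F}$-object by one of $\mca{F}'[1]$ (hence in $\mca{F}''$) and $H^1(E)$ the $\mca{T}$-part of the $\mca{T}'$-factor, which lies in $\mca{T}''$ because $\mca{T}'$ is closed under quotients in $\mca{A}'$. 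The reverse inclusion is obtained by reconstructing the extension from any $E$ satisfying these cohomological conditions. The main obstacle I foresee is the bookkeeping around the various shifts and verifying that each decomposition genuinely lives in the claimed subcategory; once the induced torsion pair on $\mca{T}$ is nailed down, the remaining verifications are routine torsion-pair manipulations parallel to Lemma~\ref{lem_23}.
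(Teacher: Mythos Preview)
Your proposal is correct and carries out exactly the kind of verification the paper has in mind: the paper does not actually give a proof of this lemma, it merely asserts ``We can also verify the following lemma'' (parallel to Lemma~\ref{lem_23}, which is likewise left to the reader), so your argument \emph{is} the omitted proof rather than an alternative to it. The dualisation you perform---inducing the torsion pair $(\mca{T}'[1]\cap\mca{T},\mca{F}'[1])$ on $\mca{T}$ from the fact that $\mca{T}[-1]$ is closed under subobjects in $\mca{A}'$, then splicing the two decompositions for (TP2), and finally reading off the $\mca{A}$-cohomology of an object of $\mca{A}''$---is precisely what Figure~\ref{fig_lem24} is encoding. One small remark: for the heart equality you only need to prove one inclusion, since two hearts of bounded t-structures with one contained in the other must coincide; this saves you the ``reconstruction'' step you flagged as remaining work.
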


\subsection{Mutation and tilting}\label{sec_}
Let $\mca{S}_k$ be the full subcategory consisting of $J_{Q,W}$-modules supported on the vertex $k$.
We put
\begin{align*}
(\mca{S}_k)^{\bot}&:=
\{
E\in \mr{Mod}J_{Q,W}\mid \mr{Hom}(s_k,E)=0\},\\
{}^{\bot}(\mca{S}_k)&:=
\{
E\in \mr{Mod}J_{Q,W}\mid \mr{Hom}(E,s_k)=0
\}.
\end{align*}
Then both $(\mca{S}_k,(\mca{S}_k)^{\bot})$ and $({}^{\bot}(\mca{S}_k),\mca{S}_k)$ give torsion pairs of $\mr{Mod}J_{Q,W}$.
It is shown in \cite[Corollary 5.5]{dong-keller} that the derived equivalences associated to a mutation are given by tilting with respect to these torsion pairs :
\begin{align}
\Phi_{k,+}^{-1}\left(\mr{Mod}J_{\mu_k(Q,W)}\right)
& = 
(\mr{Mod}J_{Q,W})^{\left((\mca{S}_k)^{\bot},\mca{S}_k[-1]\right)},\label{eq_tilting+}\\
\Phi_{k,-}^{-1}\left(\mr{Mod}J_{\mu_k(Q,W)}\right)
& = 
(\mr{Mod}J_{Q,W})^{\left(\mca{S}_k[1],{}^\bot(\mca{S}_k)\right)}.\label{eq_tilting-}
\end{align}

\subsection{Composition of mutations and tilting}\label{subsec_comp}
The proof of the following theorem is due to Tom Bridgeland.
We put $\bar{\mca{A}}:=\mr{Mod}J_{Q,W}$. 
\begin{thm}\label{thm_tilting}
There exists a unique sequence $\ep(1),\ldots, \ep(l)$ of signs which satisfies the following conditions; We put
\[
\Phi_{\kk}:=
\Phi_{k_l,\ep(l)}\circ\cdots\circ\Phi_{k_1,\ep(1)}\colon \mca{D}\Gamma\overset{\sim}{\longrightarrow} \mca{D}\Gamma_{\mu_\kk(Q,W)}
\]
and
\[
\bar{\mca{A}}_{\kk}:=\Phi_{\kk}^{-1}(\mr{Mod}J_{\mu_\kk(Q,W)}).
\]
Then 
\begin{itemize}
\item[$(A_l)$]there exists a torsion pair $(\bar{\mca{T}}_{\kk},\bar{\mca{F}}_{\kk})$ of $\bar{\mca{A}}$ such that 
\[
\bar{\mca{A}}^{(\bar{\mca{F}}_{\kk},\bar{\mca{T}}_{\kk}[-1])}=\bar{\mca{A}}_{\kk}.
\]
\item[$(B_l)$]$\Phi_{\kk}^{-1}(s_{\kk,i})\in \bar{\mca{F}}_{\kk}$ or $\Phi_{\kk}^{-1}(s_{\kk,i})\in \bar{\mca{T}}_{\kk}[-1]$ for any $i\in Q_0$ where $s_{\kk,i}$ is the simple $J_{\mu_{\kk}(Q,W)}$-module.
\end{itemize}
\end{thm}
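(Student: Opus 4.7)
The plan is to proceed by induction on $l$. The base case $l=0$ is trivial: take the torsion pair $(0, \bar{\mca{A}})$, so that $\bar{\mca{A}}^{(\bar{\mca{A}}, 0)} = \bar{\mca{A}}$ and every simple $s_i$ lies in $\bar{\mca{F}}$, making $(A_0)$ and $(B_0)$ hold vacuously.

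For the inductive step, assume $\ep(1), \ldots, \ep(l)$ and a torsion pair $(\bar{\mca{T}}_\kk, \bar{\mca{F}}_\kk)$ of $\bar{\mca{A}}$ have been produced so that $(A_l)$ and $(B_l)$ hold, and write $\kk^{(l+1)} = (k_1,\ldots,k_l,k_{l+1})$. The next sign is forced: applying $(B_l)$ to the index $i = k_{l+1}$, the object $\Phi_\kk^{-1}(s_{\kk, k_{l+1}})$ lies in exactly one of $\bar{\mca{F}}_\kk$ and $\bar{\mca{T}}_\kk[-1]$, since these two subcategories occupy distinct cohomological degrees of the t-structure on $\bar{\mca{A}}$ and have trivial intersection. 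Set $\ep(l+1) = +$ in the first case and $\ep(l+1) = -$ in the second. Iterating this argument gives uniqueness of the whole sequence: at each step both $(A_s)$ and $(B_s)$ together pin down a single admissible choice of sign.

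To establish $(A_{l+1})$, I would invoke the single-mutation identities \eqref{eq_tilting+} and \eqref{eq_tilting-}, applied to the QP $\mu_\kk(Q,W)$ at the vertex $k_{l+1}$. These identify $\Phi_{k_{l+1}, \ep(l+1)}^{-1}(\mr{Mod}J_{\mu_{\kk^{(l+1)}}(Q,W)})$ with a tilt of $\bar{\mca{A}}_\kk$ whose torsion class (for $\ep(l+1) = +$) or torsion-free class (for $\ep(l+1) = -$) is the subcategory of $\bar{\mca{A}}_\kk$ generated by $\Phi_\kk^{-1}(s_{\kk, k_{l+1}})$. By the choice of $\ep(l+1)$, this subcategory is contained in $\bar{\mca{F}}_\kk$ (resp.\ in $\bar{\mca{T}}_\kk[-1]$), so Lemma \ref{lem_23} (resp.\ Lemma \ref{lem_24}) applies and directly produces the desired torsion pair $(\bar{\mca{T}}_{\kk^{(l+1)}}, \bar{\mca{F}}_{\kk^{(l+1)}})$ of $\bar{\mca{A}}$ realizing $\bar{\mca{A}}_{\kk^{(l+1)}}$ as a one-step tilt of $\bar{\mca{A}}$.

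Verifying $(B_{l+1})$ is the main obstacle. Each new simple $s_{\kk^{(l+1)}, i}$ first pulled back to $\bar{\mca{A}}_\kk$ via $\Phi_{k_{l+1}, \ep(l+1)}$ admits an explicit description dictated by single-step tilting (compare with \eqref{eq_phi_for_s}): for $i = k_{l+1}$ it is $\Phi_{k_{l+1},\ep(l+1)}^{-1}(s_{\kk,k_{l+1}})$, shifted appropriately by $\pm 1$; for $i \neq k_{l+1}$ it is a universal extension of $\Phi_{k_{l+1},\ep(l+1)}^{-1}(s_{\kk,i})$ by copies of $\Phi_{k_{l+1},\ep(l+1)}^{-1}(s_{\kk,k_{l+1}})$, with multiplicity governed by the arrows $Q_\kk(k_{l+1},i)$ or $Q_\kk(i,k_{l+1})$. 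I would then pull back further by $\Phi_\kk$ and locate each such object in the explicit torsion pair $(\bar{\mca{T}}_{\kk^{(l+1)}}, \bar{\mca{F}}_{\kk^{(l+1)}})$ constructed in the composition-of-tilting lemmas. The delicate point is that for $i \neq k_{l+1}$ the pullback is a non-trivial extension, and one must check that the two constituents — sign-coherent with respect to $(\bar{\mca{T}}_\kk, \bar{\mca{F}}_\kk)$ by $(B_l)$ — combine into an object that is sign-coherent with respect to the newly built torsion pair. This is the categorical avatar of sign coherence of $c$-vectors, and is the real heart of the argument; it should follow by tracing the extension through the explicit definitions of $\bar{\mca{T}}''$ and $\bar{\mca{F}}''$ in Lemmas \ref{lem_23}--\ref{lem_24} and exploiting that the new torsion subcategory at stage $l+1$ is generated by a single simple lying on a predetermined side of $(\bar{\mca{T}}_\kk,\bar{\mca{F}}_\kk)$.
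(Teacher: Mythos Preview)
Your setup of the induction, the choice of $\ep(l+1)$, and the derivation of $(A_{l+1})$ from Lemmas \ref{lem_23}--\ref{lem_24} and the single-mutation identities \eqref{eq_tilting+}--\eqref{eq_tilting-} all match the paper's proof. The divergence is in how you handle $(B_{l+1})$, and here you have missed the key simplification.

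The paper does not prove $(B_{l+1})$ by tracking the new simples through extensions. Instead it observes that $(A_l)$ alone already implies $(B_l)$, for \emph{every} $l$, via a two-line argument: if $\bar{\mca{A}}_\kk$ is the tilt of $\bar{\mca{A}}$ at the torsion pair $(\bar{\mca{T}}_\kk,\bar{\mca{F}}_\kk)$, then $(\bar{\mca{F}}_\kk,\bar{\mca{T}}_\kk[-1])$ is a torsion pair \emph{in} $\bar{\mca{A}}_\kk$. Any object of $\bar{\mca{A}}_\kk$ therefore sits in a short exact sequence with a subobject in $\bar{\mca{F}}_\kk$ and a quotient in $\bar{\mca{T}}_\kk[-1]$; a simple object has no nontrivial subobjects or quotients, so it must lie entirely on one side. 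That is all that is needed. Thus the inductive scheme is really $(A_l)\Rightarrow(B_l)\Rightarrow(A_{l+1})$, and $(B_{l+1})$ comes for free once $(A_{l+1})$ is established.

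Your proposed verification of $(B_{l+1})$ --- locating nontrivial extensions of old simples inside the new torsion pair --- is genuinely delicate (it is essentially the categorical sign-coherence statement), and you do not carry it out. So as written there is a gap; but it is filled immediately once you notice the ``simple objects split torsion pairs'' trick.
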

\begin{proof}
We prove the claim by induction with respect to the length $l$ of the sequence.
First of all, $(A_1)$ is hold if we take $\ep(1)=+$ by \eqref{eq_tilting+}.

\vspace{1mm}

\noindent \underline{$(A_l)\Longrightarrow (B_l)$} :

\vspace{2mm}

Since $(\bar{\mca{F}}_{\kk},\bar{\mca{T}}_{\kk}[-1])$ give a torsion pair for $\bar{\mca{A}}_{\kk}$, an exact sequence is associated to $\Phi_{\kk}^{-1}(s_{\kk,i})$.
Because $\Phi_{\kk}^{-1}(s_{\kk,i})$ is simple in $\bar{\mca{A}}_{\kk}$, we have
\[
\Phi_{\kk}^{-1}(s_{\kk,i})\in \bar{\mca{F}}_{\kk}
\]
or
\[
\Phi_{\kk}^{-1}(s_{\kk,i})\in \bar{\mca{T}}_{\kk}[-1].
\]

\vspace{1mm}

\noindent \underline{$(B_l)\Longrightarrow (A_{l+1})$} :

\vspace{2mm}

We define $\ep(l)$ by 
\[
\ep(l)=
\begin{cases}
+ & \text{if }\Phi_{\kk}^{-1}(s_{\kk,i})\in \bar{\mca{F}}_{\kk},\\
- & \text{if }\Phi_{\kk}^{-1}(s_{\kk,i})\in \bar{\mca{T}}_{\kk}[-1].\\
\end{cases}
\]
Then the claim follows Lemma \ref{lem_23} and Lemma \ref{lem_24}.
\end{proof}
\begin{rem}
A similar statement has been shown in \cite[Theorem 2.15]{plamondon}.
\end{rem}
For $1\leq r\leq l$, let $\kk^{(r)}$ denote the truncated sequence $(k_1,\ldots,k_r)$.
For $i\in I$, we define $s^{(r)}_{i}\in \bar{\mca{A}}$
\[
s^{(r)}_{i}:=
\begin{cases}
\Phi_{\kk^{(r)}}^{-1}(s_{\kk^{(r)},i}) & \text{if } \Phi_{\kk^{(r)}}^{-1}(s_{\kk^{(r)},i})\in \bar{\mca{F}}_{\kk^{(r)}},\\
\Phi_{\kk^{(r)}}^{-1}(s_{\kk^{(r)},i})[1] & \text{if } \Phi_{\kk^{(r)}}^{-1}(s_{\kk^{(r)},i})\in \bar{\mca{T}}_{\kk^{(r)}}[-1].\\
\end{cases}
\]
We put $s^{(r)}:=s^{(r)}_{k_r}$.

The canonical t-structure of $\da$ induces a t-structure of $\dfa$ whose core is $\mca{A}:=\mr{mod}J_{Q,W}$. 
Since $s^{(r)}\in \mca{A}$ for any $r$, we have $\bar{\mca{T}}_\kk\in \mca{A}$. 
We put 
\[
\mca{A}_\kk:=\Phi_{\kk}^{-1}(\mr{mod}J_{\mu_\kk(Q,W)}),\quad 
\mca{T}_\kk:=\bar{\mca{T}}_\kk,\quad
\mca{F}_\kk:=\bar{\mca{F}}_\kk\cap\mca{A}.
\]
Then we can verify the following :
\begin{cor}
The pair of the full subcategories $(\mca{T}_\kk,\mca{F}_\kk)$ gives a torsion pair of $\mca{A}$ and 
\[
\mca{A}_\kk
=
\mca{A}^{(\mca{F}_\kk,\mca{T}_\kk[-1])}.
\]
\end{cor}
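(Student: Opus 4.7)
The plan is to deduce the corollary directly from Theorem \ref{thm_tilting} together with the observation, already noted in the text, that $\bar{\mca{T}}_\kk \subset \mca{A}$. The only genuine work is to check that restricting the big torsion pair $(\bar{\mca{T}}_\kk,\bar{\mca{F}}_\kk)$ on $\bar{\mca{A}} = \mr{Mod}\,J_{Q,W}$ to the finite-dimensional subcategory $\mca{A} = \mr{mod}\,J_{Q,W}$ both preserves the torsion-pair axioms and is compatible with the tilt.

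First I would verify that $(\mca{T}_\kk,\mca{F}_\kk)$ is a torsion pair of $\mca{A}$. Axiom (TP1) is immediate from the inclusions $\mca{T}_\kk = \bar{\mca{T}}_\kk$ and $\mca{F}_\kk \subset \bar{\mca{F}}_\kk$, applied to the corresponding axiom of $(\bar{\mca{T}}_\kk,\bar{\mca{F}}_\kk)$. For (TP2), given $X \in \mca{A}$, apply the big torsion-pair decomposition to get $0 \to T \to X \to F \to 0$ with $T \in \bar{\mca{T}}_\kk$ and $F \in \bar{\mca{F}}_\kk$; since $X$ is finite-dimensional, both $T$ and $F$ are finite-dimensional, so $T \in \bar{\mca{T}}_\kk = \mca{T}_\kk$ automatically, and $F \in \bar{\mca{F}}_\kk \cap \mca{A} = \mca{F}_\kk$.

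Next I would establish the equality $\mca{A}_\kk = \mca{A}^{(\mca{F}_\kk,\mca{T}_\kk[-1])}$ inside $\mf{D}^{\mr{fd}}\Gamma$. The key observation is that the canonical t-structure on $\mf{D}^{\mr{fd}}\Gamma$ is induced from the one on $\mf{D}\Gamma$, so cohomology of an object $E \in \mf{D}^{\mr{fd}}\Gamma$ with values in $\bar{\mca{A}}$ automatically lands in $\mca{A}$. Using the description
\[
\bar{\mca{A}}_\kk = \bar{\mca{A}}^{(\bar{\mca{F}}_\kk,\bar{\mca{T}}_\kk[-1])}
= \Bigl\{ E \in \mf{D}\Gamma \;\Bigm|\; H^0_{\bar{\mca{A}}}(E) \in \bar{\mca{F}}_\kk,\; H^1_{\bar{\mca{A}}}(E) \in \bar{\mca{T}}_\kk,\; H^i_{\bar{\mca{A}}}(E)=0 \text{ else}\Bigr\}
\]
from Theorem \ref{thm_tilting}, and the fact that $\Phi_\kk$ restricts to an equivalence $\mf{D}^{\mr{fd}}\Gamma \overset{\sim}{\to} \mf{D}^{\mr{fd}}(\mu_\kk\Gamma)$ (Theorem \ref{thm_KY}), I would intersect both sides with $\mf{D}^{\mr{fd}}\Gamma$. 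For $E \in \mf{D}^{\mr{fd}}\Gamma$ the cohomologies $H^0(E), H^1(E)$ lie in $\mca{A}$, so $H^0(E) \in \bar{\mca{F}}_\kk$ is equivalent to $H^0(E) \in \bar{\mca{F}}_\kk \cap \mca{A} = \mca{F}_\kk$, while $H^1(E) \in \bar{\mca{T}}_\kk$ is automatic given $H^1(E) \in \mca{A}$ (and equivalent to $H^1(E) \in \mca{T}_\kk$ by definition).

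The main obstacle, such as it is, lies in handling the compatibility of the two standard t-structures and verifying that $\Phi_\kk^{-1}(\mr{mod}\,J_{\mu_\kk(Q,W)}) = \Phi_\kk^{-1}(\mr{Mod}\,J_{\mu_\kk(Q,W)}) \cap \mf{D}^{\mr{fd}}\Gamma$; this follows because $\Phi_\kk$ preserves the property of having finite-dimensional cohomologies. Once this is in place, the two displayed characterizations match term by term and the corollary follows.
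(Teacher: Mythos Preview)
Your proposal is correct and matches what the paper does: the paper provides no proof at all, merely stating ``we can verify the following'' after noting $\bar{\mca{T}}_\kk \subset \mca{A}$, and your argument is exactly the routine verification being alluded to. One minor wording quibble: the phrase ``$H^1(E) \in \bar{\mca{T}}_\kk$ is automatic given $H^1(E) \in \mca{A}$'' is misleading---membership in $\bar{\mca{T}}_\kk$ is certainly not automatic from membership in $\mca{A}$; what you mean (and what your parenthetical clarifies) is that because $\bar{\mca{T}}_\kk \subset \mca{A}$ already, the intersection $\bar{\mca{T}}_\kk \cap \mca{A}$ equals $\bar{\mca{T}}_\kk = \mca{T}_\kk$, so the condition passes unchanged to the finite-dimensional side.
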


\begin{figure}[htbp]
  \centering
  \input{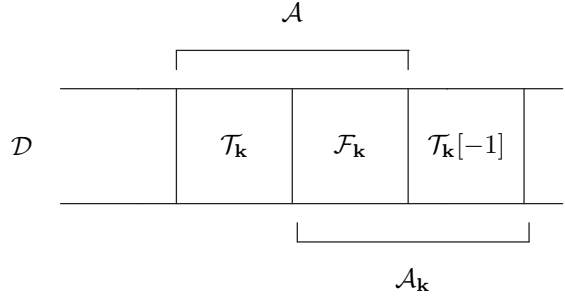}
  \caption{Composition of mutation and tilting}
  \label{}
\end{figure}

\section{Stability condition on $\dfa$}\label{subsec_stab}
In this section, we study the space of stability conditions on $\dfa$.
For a subcategory $\mca{C}\subset \dfa$,
let $C_{\mca{C}}\subset L_\R$ be the minimal cone containing all the classes of elements in $\mca{C}$ and we define its dual cone $C_{\mca{C}}^*$ by
\[
C_{\mca{C}}^*:=\bigl\{\theta\in (L_\R)^*=M_\R \mid \langle\theta,\mathbf{v}\rangle>0\text{ for any }\mathbf{v}\in C_\mca{C}\bigr\}.
\]

Throughout this section, we fix an element $\delta\in \mca{C}^*_\A$.
For $\theta\in (L_\R)^*=M_\R$, let
\[
Z_\theta\colon L\to \C
\]
denote the group homomorphism given by $Z_\theta:=\langle-\delta+\sqrt{-1}\theta,\bullet\rangle$.

\subsection{Embedding of $M_\R$}\label{subsec_embed}
If $\theta\in C_\A^*$, the pair $\zeta(\theta):=(\mca{A},Z_\theta)$
gives a Bridgeland's stability condition on $\dfa$.
This gives an embedding 
\[
\zeta \colon C_\A^* \hookrightarrow \mathrm{Stab}(\dfa)
\]
where the right hand side is the space of Bridgeland stability conditions on $\dfa$. 
We will extend this to an embedding of $(L_\R)^*=M_\R$.

For two real numbers $t$ and $\phi$, we define $t^*\phi\in \R$ so that 
\[
\mr{tan}((t^*\phi)\pi)=\mr{tan}(\phi\pi)+t, \quad 0^*\phi=\phi
\]
and so that the map $(t,\phi)\mapsto t^*\phi$ is continuous.
For $\theta\in C_\A^*$, let $\mca{P}_\theta$ the slicing of $\dfa$ corresponding to the stability condition $\zeta(\theta)$ (\cite[Definition 5.1 and Proposition 5.3]{bridgeland-stability}).
That is, $\mca{P}_\theta(\phi)$ is the full subcategory of semistable objects with phase $\phi\in \R$ with respect to the stability condition $\zeta(\theta)$. 
We define the slicing $t^*\mca{P}_\theta$ by
\[
t^*\mca{P}_\theta(\phi):=\mca{P}_\theta(t^*\phi).
\]
Then the pair $(t^*\mca{P}_\theta,Z_{\theta-t\delta})$ gives a stability condition (Figure \ref{fig_stability}). 
\begin{figure}[htbp]
\hspace{-25mm}
  \input{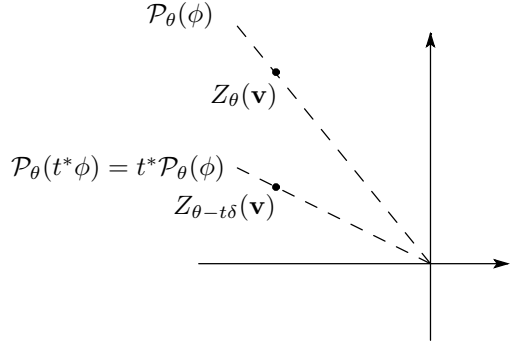}
  \caption{${\zeta}(\theta-t\delta)$}
  \label{fig_stability}
\end{figure}
We define the map
\[
\zeta \colon (L_\R)^*=M_\R \to \mathrm{Stab}(\dfa).
\]
by 
\[
{\zeta}(\theta-t\delta):=(t^*\mca{P}_\theta,Z_{\theta-t\delta})
\]
for any $\theta\in C_\A^*$ and $t\in \mathbb{R}$. 
We can verify that this is well-defined and injective.


\subsection{T-structures}\label{subsec_Tstr}
In this subsection, we describe the t-structures corresponding to some stability conditions in $\zeta(M_\R)$.
For a stability condition $\zeta$, let $\mathcal{A}_\zeta$ denote the core of the t-structure corresponding to $\zeta$, i.e. the full subcategory of objects whose HN factors have phases in $[0,1)$. 
\begin{prop}\label{prop_27}
For $\theta\in C_{\A_\kk}^*$, we have $\mathcal{A}_{\zeta(\theta)}=\mathcal{A}_{\kk}$.
\end{prop}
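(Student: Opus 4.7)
The plan is to prove this by induction on the length $l$ of $\kk$. For $l = 0$, we have $\mca{A}_{()} = \mca{A}$ and $C^{*}_{\mca{A}_{()}} = C_{\mca{A}}^{*}$, and the claim reduces to the defining property of $\zeta$ on $C_{\mca{A}}^{*}$ given in \S \ref{subsec_embed}.

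For the inductive step, let $\kk' := (k_{1}, \ldots, k_{l-1})$ and assume $\mca{A}_{\zeta(\theta')} = \mca{A}_{\kk'}$ for every $\theta' \in C^{*}_{\mca{A}_{\kk'}}$. By Theorem \ref{thm_tilting}, specifically the implication $(B_{l-1}) \Rightarrow (A_{l})$ in its proof (which invokes Lemma \ref{lem_23} or Lemma \ref{lem_24} according to the sign $\ep(l)$), the heart $\mca{A}_{\kk}$ is obtained from $\mca{A}_{\kk'}$ by a single tilt at the torsion pair of $\mca{A}_{\kk'}$ generated by the simple $S := \Phi_{\kk'}^{-1}(s_{\kk', k_{l}})$. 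Hence the cones $C^{*}_{\mca{A}_{\kk}}$ and $C^{*}_{\mca{A}_{\kk'}}$ share the codimension-one wall $W := \{\theta \in M_{\R} \mid \langle \theta, [S] \rangle = 0\}$ and lie on opposite sides of it. Given $\theta \in C^{*}_{\mca{A}_{\kk}}$, I would join $\theta$ to a point $\theta_{*} \in C^{*}_{\mca{A}_{\kk'}}$ by a path crossing $W$ transversally at a single generic interior point. By the inductive hypothesis, $\mca{A}_{\zeta(\cdot)} = \mca{A}_{\kk'}$ along the $C^{*}_{\mca{A}_{\kk'}}$-portion of the path. At the crossing, the phase of $S$ in the slicing of $\zeta$ sweeps across an integer while the phases of the remaining simples of $\mca{A}_{\kk'}$ stay away from integers; the rotation formula $t^{*}\phi$ from \S \ref{subsec_embed} then shows that the heart changes by precisely the tilt at $S$, which by Theorem \ref{thm_tilting} is the tilt producing $\mca{A}_{\kk}$ from $\mca{A}_{\kk'}$.

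The hard part is the wall-crossing analysis. Two local assertions must be verified: (i) along the path, no simple of $\mca{A}_{\kk'}$ other than $S$ has its central charge leave the strict upper half plane, so that no additional wall is crossed; and (ii) the local change of heart across $W$ is the single tilt at $S$ described in Lemma \ref{lem_23} (if $\ep(l) = +$) or Lemma \ref{lem_24} (if $\ep(l) = -$), rather than some composite tilt. Both reduce to explicit computations with $t^{*}\phi$ at $\theta_{*}$, combined with the characterization of the simples of $\mca{A}_{\kk'}$ provided by Theorem \ref{thm_tilting}.
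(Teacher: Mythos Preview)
Your proposal is correct and follows essentially the same approach as the paper's proof: induction on the length of $\kk$, with the inductive step handled by crossing the single wall $W$ perpendicular to the class of the tilting simple $S = s^{(l)}$, and identifying the resulting change of heart with the tilt from Theorem \ref{thm_tilting}. The paper phrases the inductive step slightly differently---rather than connecting an arbitrary $\theta$ to the previous chamber by a path, it picks $\theta'\in C^{*}_{\mca{A}_\kk}$ close to $W$ and far from the other walls $W^{(l-1)}_{i}$, declares it sufficient to check the claim at this single $\theta'$, and then reads off the tilt directly from the signs of $\mathrm{Re}\,Z_{\theta'}([S])$ and $\mathrm{Im}\,Z_{\theta'}([S])$---but the content is the same as your points (i) and (ii).
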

\begin{proof}
We will prove by induction with respect to the length $l$ of the sequence $\kk$. 
For $1\leq r\leq l$ 
we put $\mca{A}^{(r)}:=\mca{A}_{\kk^{(r)}}$, where $\kk^{(r)}$ is the truncated sequence.

For $i\in I$, 
let $W^{(r-1)}_{i}$ denote the hyperplane which is perpendicular to $s^{(r)}_{i}$:
\[
W^{(r-1)}_{i}:=
\Bigl\{\theta\,\Big|\, \bigl\langle\theta,[s^{(r)}_{i}]\bigr\rangle = 0 \Bigr\}.
\]
Note that the boundary of $C_{\A^{{(r-1)}}}^*$ is contained in the union of $W^{(r-1)}_{i}$'s.

Assume that $\mathcal{A}_{\zeta(\theta)}=\A^{(r-1)}$ for $\theta\in C_{\A^{(r-1)}}^*$.
Take $\theta'\in C_{\A_{(r)}}^*$ which is sufficiently close to the hyperplane $W^{(r-1)}_{k_r}=W^{(r)}_{k_r}$ and which is sufficiently far from the other hyperplanes.
It is enough to show that $\A_{\zeta(\theta')}=\A^{(r)}$.

In the case of $\ep(r)=+$, we have
\[
\mr{Re}Z_{\theta}(s(r)),\mr{Re}Z_{\theta'}(s(r))<0,\quad 
\mr{Im}Z_{\theta}(s(r))>0,\quad \mr{Im}Z_{\theta'}(s(r))<0
\]
(see Figure \ref{fig_WC}).
\begin{figure}[htbp]
  \centering
  \input{WC+.tpc}
  \caption{the case of $\ep(r)=+$.}
  \label{fig_WC}
\end{figure}So the core $\A_{\zeta(\theta')}$ is given by tilting the core $\A_{\zeta(\theta)}$ with respect to the torsion pair
\[
\left(\mca{S}{(r)},(\mca{S}{(r)})^\bot\right)
\]
where 
\[
\mca{S}{(r)}:=\bigl\{\bigl(s^{(r)}\bigr)^{\oplus n}\mid n\geq 0\bigr\}.
\]
By \eqref{eq_tilting-}, we get $\A_{\zeta(\theta')}=\A^{(r)}$. We can see in the case of $\ep(r)=-$ in the same way.
\end{proof}
\begin{thm}
Assume we have ${C}_\kk^*={C}_{\kk'}^*$. 
Then, the equivalence $\Phi_{\kk'}\circ\Phi_\kk^{-1}$ induces an equivalence from $\mr{mod}J_{\kk}$ to $\mr{mod}J_{\kk'}$.
Moreover, there is a unique permutation $\kappa\in \mathfrak{S}_I$ of $I$ such that 
\[
\Phi_{\kk'}\circ\Phi_\kk^{-1}(s_{\kk,i})=s_{\kk',\kappa(i)}
\]
\end{thm}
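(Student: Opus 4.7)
The plan is to exploit the embedding $\zeta\colon M_\R \hookrightarrow \mr{Stab}(\dfa)$ constructed in \S\ref{subsec_embed}, together with Proposition \ref{prop_27}, which identifies the heart of the t-structure of $\zeta(\theta)$ with $\mca{A}_\kk$ whenever $\theta\in C_{\mca{A}_\kk}^*$. The key observation is that if $C_\kk^* = C_{\kk'}^*$ then the same stability condition $\zeta(\theta)$ witnesses both $\mca{A}_\kk$ and $\mca{A}_{\kk'}$ as its heart, forcing them to coincide as subcategories of $\dfa$.

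First, I would fix any $\theta$ in the common cone $C_\kk^* = C_{\kk'}^*$. Applying Proposition \ref{prop_27} to the sequence $\kk$ yields $\mca{A}_{\zeta(\theta)} = \mca{A}_\kk$, while applying it to $\kk'$ yields $\mca{A}_{\zeta(\theta)} = \mca{A}_{\kk'}$. Since the heart $\mca{P}(0,1]$ is part of the data of a Bridgeland stability condition, and $\zeta(\theta)$ is a single element of $\mr{Stab}(\dfa)$, we conclude $\mca{A}_\kk = \mca{A}_{\kk'}$ as full subcategories of $\dfa$.

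Next, unraveling the definition $\mca{A}_\kk = \Phi_\kk^{-1}(\mr{mod}\,J_\kk)$ and $\mca{A}_{\kk'} = \Phi_{\kk'}^{-1}(\mr{mod}\,J_{\kk'})$, the equality $\mca{A}_\kk = \mca{A}_{\kk'}$ gives
\[
\Phi_{\kk'}\circ\Phi_\kk^{-1}(\mr{mod}\,J_\kk) \;=\; \mr{mod}\,J_{\kk'},
\]
so the triangle equivalence $\Phi_{\kk'}\circ\Phi_\kk^{-1}$ restricts to an equivalence of Abelian categories $\mr{mod}\,J_\kk \overset{\sim}{\to} \mr{mod}\,J_{\kk'}$. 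Any such equivalence sends simple objects bijectively to simple objects. Since the simples of $\mr{mod}\,J_\kk$ are precisely $\{s_{\kk,i}\}_{i\in I}$ (and analogously for $\kk'$), this bijection furnishes a permutation $\kappa\in\mathfrak{S}_I$ with $\Phi_{\kk'}\circ\Phi_\kk^{-1}(s_{\kk,i}) = s_{\kk',\kappa(i)}$; uniqueness follows because the $s_{\kk',j}$ are pairwise non-isomorphic.

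The main subtlety lies entirely in the first step: it must be genuinely true that Proposition \ref{prop_27} holds uniformly for both sequences with respect to the same global embedding $\zeta$, so that the hearts $\mca{A}_\kk$ and $\mca{A}_{\kk'}$ are literally identified inside $\dfa$ rather than only up to equivalence. Once that identification is secured, the remainder is a formal consequence of the Jordan--H\"older principle for module categories of finite-dimensional algebras.
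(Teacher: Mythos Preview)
Your argument is correct and follows the same route as the paper: both pick $\theta$ in the common chamber $C_\kk^*=C_{\kk'}^*$ and invoke Proposition~\ref{prop_27} to conclude $\mca{A}_\kk=\mca{A}_{\zeta(\theta)}=\mca{A}_{\kk'}$, which immediately yields the equivalence $\mr{mod}\,J_\kk\simeq\mr{mod}\,J_{\kk'}$. The only minor difference is in how the permutation is read off: the paper points to the description of the boundary of the chamber (each facet of $\overline{C_\kk^*}$ lies on a hyperplane $W_{\kk,i}$ orthogonal to a unique simple $s_{\kk,i}$, so matching walls of the common chamber matches simples), whereas you use the general fact that an abelian equivalence sends simples bijectively to simples; both arguments are valid and yield the same $\kappa$. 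Your closing worry about $\zeta$ is unfounded, since $\zeta\colon M_\R\to\mr{Stab}(\dfa)$ is constructed in \S\ref{subsec_embed} from the fixed initial data $(\mca{A},\delta)$ alone and does not depend on the mutation sequence.
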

\begin{proof}
The equivalence for $\mr{mod}J_{\kk}$ is a consequence of Proposition \ref{prop_27}. 
The permutation is induced by the description of the boundary of the chamber.
\end{proof}

\section{Statements}\label{sec_statement}
\subsection{Quiver Grassmannian}\label{subsec_grass}
Let 
$\Gamma_{\kk,i}$ denote the direct summand of $\Gamma_\kk$ and 
$P_{\kk,i}$ denote the projective indecomposable $J_\kk$-module. 
We put
\[
R_{\kk,i}:=H^{1}_{\bar{\A}}(\Phi_{\kk}^{-1}(\Gamma_{\kk,i}))=H^{1}_{\bar{\A}}(\Phi_{\kk}^{-1}(P_{\kk,i}))\in \mca{T}_\kk\subset \A.
\]
\begin{defn}
For $\mathbf{v}\in L$, let $\mathrm{Grass}(\mathbf{k};i,\mathbf{v})$ be the moduli scheme which parametrizes elements $V$ in $\A$ equipped with surjections from $R_{\kk,i}$ such that $[V]=\mathbf{v}$\textup{:}
\[
\mathrm{Grass}(\mathbf{k};i,\mathbf{v}):=\{R_{\kk,i}\twoheadrightarrow V\mid V\in \A,\ [V]=\mathbf{v}\}.
\]
\end{defn}
We call $\mathrm{Grass}(\mathbf{k};i,\mathbf{v})$ as a {\it quiver Grassmannian}.
\begin{rem}
We can construct the moduli scheme as a GIT quotient \textup{(see \cite[\S 5.1]{ncdt-vo})}.
\end{rem}
Let $\mca{M}_\A$ be the moduli stack of objects in $\A$ and $\nu_{\mca{M}_\A}$ be the Behrend function on it (\cite{behrend-dt}).
We define 
\begin{align*}
e_+(\mathrm{Grass}(\mathbf{k};i,\mathbf{v}))&:=e(\mathrm{Grass}(\mathbf{k};i,\mathbf{v})),\\
e_-(\mathrm{Grass}(\mathbf{k};i,\mathbf{v}))&:=
\int_{\mathrm{Grass}(\mathbf{k};i,\mathbf{v})}\pi^*(\nu_{\mca{M}_\A})\cdot\mr{d}e\\
&=\sum_{n\in \Z} n\cdot e(\pi^*(\nu_{\mca{M}_\A})^{-1}(n)),
\end{align*}
where $\pi\colon \mathrm{Grass}(\mathbf{k};i,\mathbf{v})\to \mca{M}_\A$ is the forgetful morphism and $e(-)$ represents the topological Euler characteristics.

\subsection{On non-commutative Donaldson-Thomas invariants}\label{subsec_ncdt}
\subsubsection{Non-commutative Donaldson-Thomas invariants}\label{subsub_ncdt}
For a vertex $i\in I$ and an element $\mathbf{v}\in L$, let $\mr{Hilb}_J(i;\mathbf{v})$ be the moduli scheme which parametrizes elements in $V\in \moda$ equipped with a surjection from $P_i$ such that $[V]=\mathbf{v}$:
\[
\mr{Hilb}_J(i;\mathbf{v}):=\{P_i\twoheadrightarrow V\mid V\in \A, [V]=\mathrm{v} \}.
\]
\begin{defn}[\protect{\cite{szendroi-ncdt}}]\label{defn_ncdt}
We define invariants by
\begin{align*}
&\mr{DT}_{J,+}(i;\mathbf{v}):=e({\mr{Hilb}_J(i;\mathbf{v})}),\\
&\mr{DT}_{J,-}(i;\mathbf{v}):=
\int_{\mr{Hilb}_J(i;\mathbf{v})}\pi^*(\nu_{\mca{M}_\A})\cdot \mr{d}e=\sum_{n\in Z}n\cdot e(\pi^*(\nu_{\mca{M}_\A})^{-1}(n)),
\end{align*}
where $\pi\colon {\mr{Hilb}_J(i;\mathbf{v})}\to \mca{M}_\A$ is the forgetful morphism.
\end{defn}
\begin{rem}
The non-commutative Donaldson-Thomas invariants in \cite{szendroi-ncdt} are defined using the Behrend function on $\mr{Hilb}_J(i;\mathbf{v})$\textup{:}
\[
\mr{DT}_{J,\mr{Sze}}(i;\mathbf{v}):=
\int_{\mr{Hilb}_J(i;\mathbf{v})}\nu_{\mr{Hilb}_J(i;\mathbf{v})}\cdot \mr{d}e:=\sum_{n\in Z}n\cdot e(\nu_{\mr{Hilb}_J(i;\mathbf{v})}^{-1}(n)).
\]
We have $\pi^*(\nu_{\mca{M}_\A})=(-1)^{v_i}\cdot\nu_{\mr{Hilb}_J(i;\mathbf{v})}$ and 
\[
\mr{DT}_{J,\mr{Sze}}(i;\mathbf{v})=(-1)^{v_i}\cdot\mr{DT}_{J,-}(i;\mathbf{v}).
\]
\end{rem}
We define generating functions by
\[
\mca{Z}_{J,\sigma}^{i}:=\sum_{\mathbf{v}}\mr{DT}_{J,\sigma}(i;\mathbf{v})\cdot \mathbf{y}^{\mathbf{v}}_\sigma.
\]

\subsubsection{Torus automorphism via ncDT invariants}\label{subsub_auto}
For a full subcategory $\mca{C}\subset\dfa$, we define $\hT_{\mca{C},\sigma}$ 
and $\hDT_{\mca{C},\sigma}$ by
\begin{align*}
&\hT_{\mca{C},\sigma}:=\biggl(\,\prod_{\mathbf{v}\in L\cap C_\mca{C}} \C\cdot \mathbf{y}_\sigma^\mathbf{v}\biggr) \oplus \biggl(\,\bigoplus_{\mathbf{v}\in L-C_\mca{C}} \C\cdot \mathbf{y}_\sigma^\mathbf{v}\biggr),\\
&\hDT_{\mca{C},\sigma}:=\T_{\mca{C},\sigma}^\vee\otimes \hT_{\mca{C},\sigma}
\end{align*}
where $C_\mca{C}\subset L_\R$ is the minimal cone which contains all the classes of elements in $\mca{C}$.
They are called {\it the completions with respect to $\mca{C}$}. 
If $\mca{C}$ is a subcategory of a core of a t-structure, then the products extend to these completions.
Moreover, if $\mca{C'}\subset\mca{C}$ then the completions with respect to $\mca{C'}$ give subalgebras of the ones with respect to $\mca{C}$.
If an automorphism of the completion with respect to $\mca{C}'$ lifts to the one for $\mca{C}$, we use the same symbol as the original automorphism for the lift.
Note that $\mca{Z}_{J,\sigma}^{i}$ gives an elements in $\hT_{\A,\sigma}$.
\begin{defn}
We define torus automorphisms
\[
\mca{DT}_{J,\sigma}\colon \hDT_{\A,\sigma} \overset{\sim}{\too} \hDT_{\A,\sigma}
\]
by
\[
\mca{DT}_{J,\sigma}(x_{i,\sigma}):=x_{i,\sigma}\cdot \mca{Z}_{J,\sigma}^{i},\quad 
\mca{DT}_{J,\sigma}(y_{i,\sigma}):=y_{i,\sigma}\cdot \prod_j(\mca{Z}_{J,\sigma}^{j})^{\bar{Q}(j,i)}.
\]
\end{defn}

\subsubsection{Transformation formula of ncDT invariants}\label{subsub_trans}
\begin{defn}
We define torus automorphisms
\[
\mr{Ad}_{\mca{T}_\kk[-1],\sigma}\colon \DT_{\sigma} \overset{\sim}{\too} \DT_{\sigma}
\]
by
\begin{align}
\mr{Ad}_{\mca{T}_\kk[-1],\sigma}(x_{\kk,i,\sigma})
&:=x_{\kk,i,\sigma}\cdot \Biggl(\sum_\mathbf{v}e_\sigma\Bigr(\mathrm{Grass}(\mathbf{k};i,\mathbf{v})\Bigr)\cdot\mathbf{y}_\sigma^{-\mathbf{v}}\Biggr),\\
\mr{Ad}_{\mca{T}_\kk[-1],\sigma}(y_{\kk,i,\sigma})
&:=y_{\kk,i,\sigma}\cdot \prod_j\Biggl(\sum_\mathbf{v}e_\sigma\Bigr(\mathrm{Grass}(\mathbf{k};j,\mathbf{v})\Bigr)\cdot\mathbf{y}_\sigma^{-\mathbf{v}}\Biggr)^{\bar{Q}(j,i)}.
\end{align}
They lift to the completions with respect to $\A_\kk$. 
We also define $\mr{Ad}_{\mca{T}_\kk,\sigma}$ by 
\[
\mr{Ad}_{\mca{T}_\kk,\sigma}
:=\Sigma\circ \mr{Ad}_{\mca{T}_\kk[-1],\sigma}\circ \Sigma
\]
which lift to the completions with respect to $\A$. \textup{(}See \eqref{eq_Sigma} for the definition of $\Sigma$.\textup{)} 
\end{defn}
\begin{thm}\label{thm_trans}
The composition 
\[
\mr{Ad}_{\mca{T}_\kk,\sigma}^{-1}\circ \mca{DT}_{J,\sigma}
\]
preserves $\hDT_{\mca{F}_\kk,\sigma}$ and lifts to the automorphism of $\hDT_{\A_\kk,\sigma}$.
Moreover, we have the following identity of automorphisms of $\hDT_{\A_\kk,\sigma}$\textup{:}
\[
\mca{DT}_{J_\kk,\sigma}
=
\mr{Ad}_{\mca{T}_\kk,\sigma}^{-1}\circ \mca{DT}_{J,\sigma}\circ \mr{Ad}_{\mca{T}_\kk[-1],\sigma}.
\]
\end{thm}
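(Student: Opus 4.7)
The strategy is to follow the Reineke-Joyce-Bridgeland philosophy outlined in the introduction: first prove an identity in the motivic Hall algebra of $\bar{\A}$, then push it through the integration map reviewed in \S\ref{sec_MH} to obtain the claimed equality of automorphisms of $\hDT_{\A_\kk,\sigma}$. It is sufficient to verify the identity on the generators $x_{\kk,i,\sigma}$ (the identity on $y_{\kk,i,\sigma}$ then follows formally from the definitions of $\mca{DT}_{J,\sigma}$, $\mr{Ad}_{\mca{T}_\kk[-1],\sigma}$ and the fact that $\phi_\kk$ is compatible with the Euler form, together with \eqref{eq_phi_for_s}).

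The categorical heart of the argument lies in analyzing surjections $P_{\kk,i}\twoheadrightarrow V$ in $\A_\kk$. By Theorem \ref{thm_tilting} and its corollary, $\A_\kk=\A^{(\mca{F}_\kk,\mca{T}_\kk[-1])}$, so under $\Phi_\kk^{-1}$ the projective $P_{\kk,i}\in\mr{mod}\,J_\kk$ is an object of $\A_\kk\subset\da$ concentrated in cohomological degrees $0$ and $1$ with
\[
H^0_{\bar{\A}}(\Phi_\kk^{-1}(P_{\kk,i}))\in\mca{F}_\kk,\qquad H^1_{\bar{\A}}(\Phi_\kk^{-1}(P_{\kk,i}))=R_{\kk,i}\in\mca{T}_\kk .
\]
Given an epimorphism in $\A_\kk$ with kernel $K\in\A_\kk$, the long exact sequence for the original t-structure decomposes this datum into a pair: a surjection $R_{\kk,i}\twoheadrightarrow V'$ with $V'\in\mca{T}_\kk$ (parametrized by $\mathrm{Grass}(\kk;i,\mathbf v)$), and, given such a quotient, a surjection in $\bar{\A}$ from the extension $H^0_{\bar{\A}}(\Phi_\kk^{-1}(P_{\kk,i}))$ into $\mca{F}_\kk$. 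Conversely, such pairs uniquely reconstruct an $\A_\kk$-surjection.

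This bijection is upgraded to an identity in the motivic Hall algebra $\mr{MH}(\bar{\A})$: the stack parametrizing $P_{\kk,i}$-pointed objects of $\A_\kk$ factors as a Hall-algebra convolution of the stack of $R_{\kk,i}$-pointed objects in $\mca{T}_\kk$ and a stack of $H^0_{\bar{\A}}(\Phi_\kk^{-1}(P_{\kk,i}))$-pointed objects in $\mca{F}_\kk$; the relevant $\Ext^1$-vanishings needed for convolution come from $\Hom(\mca{T}_\kk,\mca{F}_\kk)=0=\Hom(\mca{T}_\kk,\mca{T}_\kk[-1])$ built into the tilting. Pushing this identity to $\hDT_{\A,\sigma}$ via the integration map turns the three factors into $x_{\kk,i,\sigma}\cdot\mca{Z}_{J_\kk,\sigma}^i$, the Grassmannian sum $\sum_{\mathbf v}e_\sigma(\mathrm{Grass}(\kk;i,\mathbf v))\mathbf y_\sigma^{-\mathbf v}$, and the component of $\mca{DT}_{J,\sigma}$ responsible for $\mca{F}_\kk$-quotients respectively. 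Rearranging the resulting identity — using that $\Sigma$ implements the sign change between ``quotients of'' and ``subobjects of'' in the tilted picture — produces the stated commutation $\mca{DT}_{J_\kk,\sigma}=\mr{Ad}_{\mca{T}_\kk,\sigma}^{-1}\circ\mca{DT}_{J,\sigma}\circ\mr{Ad}_{\mca{T}_\kk[-1],\sigma}$.

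The lifting/convergence statement requires showing that $\mr{Ad}_{\mca{T}_\kk,\sigma}^{-1}\circ\mca{DT}_{J,\sigma}$ sends $\hDT_{\mca{F}_\kk,\sigma}$ into $\hDT_{\A_\kk,\sigma}$; this follows because $\A_\kk$ is the extension closure of $\mca{F}_\kk\cup\mca{T}_\kk[-1]$, so any dimension vector arising in the relevant generating functions lies in $L\cap C_{\A_\kk}$, by the description of the t-structure in Proposition \ref{prop_27} and the stability discussion of \S\ref{subsec_stab}. The main obstacle is the Hall-algebra factorization: one must verify that every epimorphism in $\A_\kk$ canonically decomposes as above with well-defined Behrend-function weights. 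For $\sigma=+$ this is a straightforward fibration argument on Euler characteristics, but for $\sigma=-$ it requires multiplicativity of $\nu_{\mca{M}_\A}$ along the convolution stack, which in turn relies on the $3$-Calabi-Yau property of $\da$ (so that $\Phi_\kk$ respects the Behrend function pulled back from $\mca{M}_{\A_\kk}$) and on the finiteness assumption on $(Q,W)$ ensuring that the Hall-algebra elements live in the completion and the integration map is defined.
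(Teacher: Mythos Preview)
Your overall philosophy is right, but the execution has a genuine gap at the passage from Hall algebra to torus. You write that ``pushing this identity to $\hDT_{\A,\sigma}$ via the integration map turns the three factors into'' three explicit products. This treats $I_\sigma$ as if it were a ring homomorphism carrying Hall products to torus products. It is not: $I_\sigma$ is only a \emph{Poisson} algebra map on the semiclassical quotient $\mr{MH}_{\mr{sc}}$ (Theorem~\ref{thm_integral}), and elements like $\mca{M}_{\mca{C}}$ or $\mr{Hilb}_J(i)$ typically do not lie in $\mr{MH}_0$ at all. The paper's entire \S\ref{subsec_auto} is devoted to circumventing this: one takes $\ep_{\mca{C}}=\log(1+\mca{M}_\mca{C})$, invokes Joyce's ``no poles'' theorem (Theorem~\ref{thm_pole}) to know $\hep_{\mca{C}}\in\widehat{\mr{MH}}_{\mr{sc}}$, and then \emph{defines} $\widehat{\mr{Ad}}_{\mca{C},\sigma}:=\exp(\mathrm{ad}_{I_\sigma(\hep_{\mca{C}})})$. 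The matching of $\widehat{\mr{Ad}}_{\A,\sigma}$ with $\mca{DT}_{J,\sigma}$ and of $\widehat{\mr{Ad}}_{\mca{T}_\kk[-1],\sigma}$ with $\mr{Ad}_{\mca{T}_\kk[-1],\sigma}$ is then done through the ``infinitesimal commutator identity'' (Proposition~\ref{prop_commutator}) together with the Hilbert scheme and quiver Grassmannian identities of \S\ref{subsec_eq_in_MH}. None of this is visible in your sketch.

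Separately, the categorical factorisation you propose is not the one the paper uses and is not clean as stated. You claim a bijection between $\A_\kk$-surjections $P_{\kk,i}\twoheadrightarrow V$ and pairs consisting of a quotient of $R_{\kk,i}$ and ``a surjection from $H^0_{\bar\A}(\Phi_\kk^{-1}(P_{\kk,i}))$ into $\mca{F}_\kk$''; but the long exact sequence only gives that $H^1_{\A}(P_{\kk,i})\to H^1_{\A}(V)$ is onto, while $H^0_{\A}(P_{\kk,i})\to H^0_{\A}(V)$ need not be, so the second datum is not what you say. More seriously, ``the component of $\mca{DT}_{J,\sigma}$ responsible for $\mca{F}_\kk$-quotients'' is not a well-defined object: $\mca{DT}_{J,\sigma}$ is built from quotients of $P_i$, not of $H^0_{\bar\A}(\Phi_\kk^{-1}(P_{\kk,i}))$, and there is no natural splitting of $\mca{DT}_{J,\sigma}$ along the torsion pair at the level of generating functions. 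The paper instead factorises the \emph{moduli stacks} via the torsion pair identity $\mca{M}_{\A}=\mca{M}_{\mca{T}_\kk}*\mca{M}_{\mca{T}_\kk^\bot}$ and $\mca{M}_{\A_\kk}=\mca{M}_{\mca{T}_\kk^\bot}*\mca{M}_{\mca{T}_\kk[-1]}$ (Proposition~\ref{prop_mtpi}), which via Lemma~\ref{lem_factor} yields the factorisations $\widehat{\mr{Ad}}_{\A,\sigma}=\widehat{\mr{Ad}}_{\mca{T}_\kk,\sigma}\circ\widehat{\mr{Ad}}_{\mca{T}_\kk^\bot,\sigma}$ and $\widehat{\mr{Ad}}_{\A_\kk,\sigma}=\widehat{\mr{Ad}}_{\mca{T}_\kk^\bot,\sigma}\circ\widehat{\mr{Ad}}_{\mca{T}_\kk[-1],\sigma}$ (Theorem~\ref{thm_factor1}); eliminating the common middle factor $\widehat{\mr{Ad}}_{\mca{T}_\kk^\bot,\sigma}$ gives the theorem. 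The preservation and lifting statement then comes for free from this factorisation, since $\widehat{\mr{Ad}}_{\mca{T}_\kk^\bot,\sigma}$ is by construction an automorphism of the completion with respect to $\mca{T}_\kk^\bot\subset\mca{F}_\kk$.
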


\subsection{Caldero-Chapoton formula}
In this subsection, we put $\sigma=+$ and use notations without ``+''.
We identify $\C(x_1,\ldots,x_n)$ with the fractional field of $T_Q$. 
\begin{thm}\label{thm_CC}
We have
\begin{equation}\label{eq_CC}
\mathrm{FZ}_{\kk,i}(\underline{x})=
x_{\kk,i}\cdot \Biggl(\sum_\mathbf{v}e\Bigr(\mathrm{Grass}(\mathbf{k};i,\mathbf{v})\Bigr)\cdot\mathbf{y}^{-\mathbf{v}}\Biggr).
\end{equation}
where $(\underline{y})^{-\mathbf{v}}=\prod_j (y_j)^{-v_j}$ and $y_j=\prod_i(x_i)^{\bar{Q}(i,j)}$
\end{thm}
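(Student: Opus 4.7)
The plan is to deduce Theorem \ref{thm_CC} from the transformation formula in Theorem \ref{thm_trans} by projecting through the surjection $\pi_+\colon\mathbb{T}_{Q,+}\twoheadrightarrow T_{Q,+}^\vee$ of \eqref{eq_pi}. Applying $\pi_+$ to the explicit formula for $\mr{Ad}_{\mca{T}_\kk[-1],+}(x_{\kk,i,+})$ in Theorem \ref{thm_trans} already produces the right-hand side of \eqref{eq_CC}: indeed $\pi_+(x_{\kk,i,+})=x_{\kk,i}$ and $\pi_+(y_{j,+})=\mathbf{x}^{[s_j]}=\prod_i x_i^{\bar{Q}(i,j)}=y_j$, so $\pi_+(\mathbf{y}_+^{-\mathbf{v}})=\mathbf{y}^{-\mathbf{v}}$. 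Thus it suffices to verify
\[
\pi_+\bigl(\mr{Ad}_{\mca{T}_\kk[-1],+}(x_{\kk,i,+})\bigr)=FZ_{\kk,i}(\underline{x}).
\]

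I would establish this identity by induction on the length $l$ of $\kk$. For $l=1$, equation \eqref{eq_preCT} gives $\mr{Ad}_{\mca{T}_{(k)}[-1],+}(x_{(k),k,+})=x_{(k),k,+}(1+y_{k,+}^{-1})$, and \eqref{eq_yy'} yields $x_{(k),k,+}=x_k^{-1}\prod_j x_j^{Q(k,j)}$. Using $\pi_+(y_{k,+}^{-1})=\prod_i x_i^{Q(i,k)-Q(k,i)}$ (which follows from $\bar{Q}(i,k)=Q(k,i)-Q(i,k)$), a direct calculation gives
\[
\pi_+\bigl(\mr{Ad}_{\mca{T}_{(k)}[-1],+}(x_{(k),k,+})\bigr)=x_k^{-1}\Bigl(\prod_i x_i^{Q(k,i)}+\prod_i x_i^{Q(i,k)}\Bigr),
\]
which is exactly $FZ_{(k),k}(\underline{x})$ by \eqref{eq_cluster}; the case $i\neq k$ is trivial since $R_{(k),i}=0$ and $\phi_{k,+}^{-1}([\Gamma_i'])=[\Gamma_i]$.

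For the inductive step, write $\kk=\kk'\cdot(k)$ and set $(Q',W'):=\mu_{\kk'}(Q,W)$. The inductive hypothesis applied to $\kk'$ gives $FZ_{\kk',j}(\underline{x})=\pi_+(\mr{Ad}_{\mca{T}_{\kk'}[-1],+}(x_{\kk',j,+}))$ for every vertex $j$. Applying the base case to the single mutation at $k$ of $(Q',W')$ then describes the cluster transformation from the seed $(Q_{\kk'}\mid FZ_{\kk',\cdot}(\underline{x}))$. These two pieces are combined using the categorical compositionality of iterated tilting described in \S\ref{subsec_comp} via Lemmas \ref{lem_23} and \ref{lem_24}: on the torus side this translates into a factorization of $\mr{Ad}_{\mca{T}_\kk[-1],+}$ through $\mr{Ad}_{\mca{T}_{\kk'}[-1],+}$ composed with the single-step Ad-operator for $\mu_k$ on $(Q',W')$. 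Projecting via $\pi_+$ then reproduces the iterated Fomin-Zelevinsky mutation $FZ_{\kk,i}(\underline{x})$, closing the induction.

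The main obstacle is bookkeeping of the various completions: $\mr{Ad}_{\mca{T}_\kk[-1],+}$ lives on a completion of $\mathbb{T}_{Q,+}$ adapted to $(\mca{T}_\kk,\mca{F}_\kk)$, and in the inductive step one must check that the two-step factorization takes place inside compatible completions so that both the composition and the projection through $\pi_+$ make sense. The sign-coherence of the simples $s^{(r)}$ furnished by Theorem \ref{thm_tilting}, together with the nested description of $\mca{T}_\kk$ in \S\ref{subsec_comp}, supply this compatibility, after which Theorem \ref{thm_CC} follows by pushing the identity in Theorem \ref{thm_trans} down through $\pi_+$.
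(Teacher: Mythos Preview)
Your overall strategy coincides with the paper's: identify the right-hand side of \eqref{eq_CC} with $\pi_+\bigl(\mr{Ad}_{\mca{T}_\kk[-1],+}(x_{\kk,i,+})\bigr)$, then show that $\mr{Ad}_{\mca{T}_\kk[-1],+}$ factors as a composition of single-step operators, each of which projects under $\pi_+$ to a Fomin--Zelevinsky mutation. The paper carries this out via Theorem~\ref{thm_521} (the Grassmannian description) together with the factorization identity Theorem~\ref{thm_factor2}, whose proof rests on the motivic Hall algebra computations of \S\ref{subsec_eq_in_MH}--\S\ref{subsec_auto} (in particular Proposition~\ref{prop_mfi}, Corollary~\ref{cor_hom_factor}, and Lemma~\ref{lem_factor}).

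The genuine gap in your argument is the inductive step. You assert that the categorical compositionality of tiltings in Lemmas~\ref{lem_23} and~\ref{lem_24} ``translates'' into a factorization
\[
\mr{Ad}_{\mca{T}_\kk[-1],+}=\mr{Ad}_{\mca{T}_{\kk'}[-1],+}\circ\bigl(\text{single-step }\mr{Ad}\text{ for }\mu_k\text{ on }(Q',W')\bigr),
\]
but this is precisely the nontrivial content of Theorem~\ref{thm_factor2}, and it does \emph{not} follow from Lemmas~\ref{lem_23}--\ref{lem_24} together with Theorem~\ref{thm_trans}. Those lemmas only describe how the torsion subcategories $\mca{T}_\kk$ are nested; passing from that categorical statement to an identity of torus automorphisms requires the integration map $I_+$ and the Hall-algebra identities (notably $\mca{M}_{\mca{T}_\kk}=\mca{M}_{\mca{S}(1)}^{\ep(1)}*\cdots*\mca{M}_{\mca{S}(l)}^{\ep(l)}$ and its $[\mathbf{w}]$-twisted version). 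Nor can you extract the factorization by applying Theorem~\ref{thm_trans} to $\kk$, $\kk'$, and the single step separately: combining those three instances yields an equality with $\mca{DT}_J$ sandwiched in the middle, and $\mca{DT}_J$ cannot be cancelled. The ``bookkeeping of completions'' you flag at the end is a secondary issue; the missing ingredient is the Hall-algebra proof of the factorization itself.
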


\begin{ex}\label{ex_cluster}
If we take a sequence $\kk=(k)$ of length $1$, then we have
\[
R_{(k),i}=\begin{cases}
0 & i\neq k,\\
s_k & i=k.
\end{cases}
\]
Hence we have
\begin{equation}\label{eq_preCT}
FZ_{i,k}(\underline{x})=
\begin{cases}
x_i' & i\neq k,\\
x_k'(1+(y_k)^{-1}) & i=k,
\end{cases}
\end{equation}
where $y_k=\prod_j (x_j)^{\bar{Q}(j,k)}$, $x_i':=\mathbf{x}^{[\Gamma'_i]}$ and $\Gamma'_i$ is the direct summand of $\mu_k\Gamma$.
Note that by \eqref{eq_yy'} we have
\[
x_i'=x_i\ (i\neq k),\quad 
x_k'=(x_k)^{-1}\prod_j (x_j)^{Q(j,k)},\quad
\]
Substituting these for \eqref{eq_preCT}, we get the cluster transformation \eqref{eq_cluster}.
\end{ex}

\section{Review: Motivic Hall algebra}\label{sec_MH}
\subsection{Motivic Hall algebra and its limit}
\subsubsection{Relative Grothendieck ring of stacks}
For an algebraic stack $\sS$, let $\mathrm{St}/\sS$ denote the category whose objects are finite type stacks $\X$ over $\C$ equipped with a morphism to $\sS$.

A morphism of stacks $f\colon \to Y$ is said to be a 
{\it geometric bijection} if it is representable and the induced functor on groupoids of $\C$-valued points
\[
f(\C)\colon X(\C) \to Y(\C)
\]
is an equivalence of categories (\cite[Definition 3.1]{bridgeland-hall}).

A morphism of stacks $f\colon \to Y$ is said to be a {\it Zariski fibration} if its pullback
to any scheme is a Zariski fibration of schemes (\cite[Definition 3.3]{bridgeland-hall}).

We define $K(\mathrm{St}/\sS)$ by the free Abelian group spanned by isomorphism classes of $\mathrm{St}/\sS$ modulo the following relations:
\begin{enumerate}
\item[(1)] 
$[\X_1\sqcup \X_2\overset{f_1\sqcup f_2}{\too}\sS]=
[\X_1\overset{f_1}{\too}\sS]+
[\X_2\overset{f_2}{\too}\sS]$,
\item[(2)] 
$[\X_1\overset{f_1}{\too}\sS]=[\X_2\overset{f_2}{\too}\sS]$ if there is a geometric bijection $g\colon\X_1\to\X_2$ with $f_1=f_2\circ g$,
\item[(3)] 
$[\X_1\overset{f_1}{\too}\sS]=[\X_2\overset{f_2}{\too}\sS]$ if there is a factorisations $f_i=g\circ h_i$ such that $h_i\colon \X_i\to \Y$ are Zariski fibrations with the same fibres
\end{enumerate}
(\cite[Definition 3.6]{bridgeland-hall}). We call $K(\mathrm{St}/\sS)$ as the {\it relative Grothendieck ring of stacks over $\sS$}.

A morphism of stacks $\psi\colon\mathcal{T}\to\mathcal{S}$ induces a map 
\[
\psi_*\colon K(\mr{St}/\tT)\to K(\mr{St}/\sS)
\]
sending $[g\colon \Y\to\tT]$ to $[\psi \circ g\colon \X\to\sS]$.
If $\psi$ is of finite type it also induces a map 
\[
\psi^*\colon K(\mr{St}/\sS)\to K(\mr{St}/\tT)
\]
sending $[f\colon \X\to\sS]$ to the map $[g\colon \Y\to\tT]$ in the following Cartesian diagram:
\[
\xymatrix{
\mathcal{Y} \ar[d] \ar[r]^g \ar@{}[dr]|\square & \tT \ar[d]^{\psi}  & \\
\mathcal{X} \ar[r]_f & \sS \ar@{}[r]_. &
}
\]

\subsubsection{Motivic Hall algebra}
Let $\M$ be the moduli stack of all objects in $\Afd=\mathrm{mod}J$ and $\MM$ be the moduli stack of all exact sequences in $\Afd$. 
Let $p_\varepsilon\colon \MM\to\M$ ($\varepsilon=1,2,3$) be the morphism given by taking $\varepsilon$-th terms of exact sequences.  
Then, $p_2$ is of finite type.
Using the diagram
\[
\xymatrix{
\MM \ar[d]_{p_1\times p_3} \ar[r]^{p_2} & \M \\
\M\times\M & 
}
\]
we define a product $*$ on $K(\mathrm{St}/\M)$ by
\begin{equation}\label{eq_hall}
*:=(p_2)_*(p_1\times p_3)^*\colon K(\mathrm{St}/\M)\otimes K(\mathrm{St}/\M)\to K(\mathrm{St}/\M).
\end{equation}
We put $\mathrm{MH}(\Afd):=K(\mathrm{St}/\M)$. 
The algebra $(\mathrm{MH}(\Afd),*)$ is called the {\it motivic Hall algebra} of the Abelian category $\Afd$.
\begin{thm}[\protect{\cite{joyce-2},\cite[Theorem 4.1]{bridgeland-hall}}]
The motivic Hall algebra $(\mathrm{MH}(\Afd),*)$ is associative.
\end{thm}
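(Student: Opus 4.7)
The plan is to prove associativity by introducing the moduli stack $\M^{(3)}$ of $3$-step filtrations in $\Afd$ and showing that both triple products $(f*g)*h$ and $f*(g*h)$ coincide with a single push-pull through $\M^{(3)}$. The core reason this works is that a $3$-step filtration $0 \subset X_1 \subset X_2 \subset X_3$ can be decomposed as a composable pair of short exact sequences in two essentially symmetric ways: one records the short exact sequence $0 \to X_1 \to X_2 \to X_2/X_1 \to 0$ first and then the extension of $X_3/X_2$ by $X_2$, the other records the extension $0 \to X_2/X_1 \to X_3/X_1 \to X_3/X_2 \to 0$ first and then extends $X_3/X_1$ by $X_1$.

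Concretely, first I would define $\M^{(3)}$ along with the natural morphisms $q_{1},q_{2},q_{3}\colon \M^{(3)} \too \M$ sending a filtration to its subquotients $X_1$, $X_2/X_1$, $X_3/X_2$, and $q_{\mr{tot}}\colon \M^{(3)} \too \M$ sending it to $X_3$. Then I would produce two Cartesian squares of stacks:
\[
\xymatrix{
\M^{(3)} \ar[r] \ar[d] \ar@{}[dr]|\square & \MM \ar[d]^{p_1\times p_3} \\
\MM\times\M \ar[r]^-{(p_2,\mr{id})} & \M\times\M
}
\qquad
\xymatrix{
\M^{(3)} \ar[r] \ar[d] \ar@{}[dr]|\square & \MM \ar[d]^{p_1\times p_3} \\
\M\times\MM \ar[r]^-{(\mr{id},p_2)} & \M\times\M
}
\]
expressing $\M^{(3)}$ as a moduli of either a short exact sequence whose subobject is itself the middle term of a short exact sequence, or of a short exact sequence whose quotient is itself the middle term of a short exact sequence.

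Using base change (which holds in $K(\mr{St}/-)$ for the class of morphisms involved because $p_2$ is of finite type, so $p_2^*$ is defined and interacts correctly with $(p_2)_*$), both $(f*g)*h$ and $f*(g*h)$ become the class of the span
\[
(q_1\times q_2 \times q_3)^*(f\times g\times h)
\]
pushed forward along $q_{\mr{tot}}\colon \M^{(3)} \too \M$. Unwinding the definition \eqref{eq_hall}, the two associators correspond precisely to traversing the two Cartesian squares above in the two possible orders.

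The main obstacle is verifying that the two squares are genuinely Cartesian, and that the relevant push-pulls commute; concretely, one must check that giving a point of the fibre product in each square is the same as giving a $3$-step filtration, up to the equivalence used in the definition of $K(\mr{St}/\M)$ (geometric bijections). This is essentially the standard fact that extensions of extensions are filtrations, but it needs to be written down stack-theoretically — making sure that automorphism groupoids match and that the representability/finite-type hypotheses needed to apply the base-change relation in the relative Grothendieck ring are satisfied at each step. Once these squares are in place, associativity is a direct diagram chase; I would formally conclude by computing $(f*g)*h$ and $f*(g*h)$ as
\[
(q_{\mr{tot}})_*\,(q_1\times q_2\times q_3)^*(f\boxtimes g\boxtimes h),
\]
so that the two are manifestly equal in $\MH(\Afd)$.
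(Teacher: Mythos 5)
The paper does not actually prove this statement---it is quoted from \cite{joyce-2} and \cite[Theorem 4.1]{bridgeland-hall}---and your argument is precisely the proof given in the cited reference: realize both bracketings $(f*g)*h$ and $f*(g*h)$ as one push-pull through the stack of three-term filtrations via the two Cartesian squares over $\M\times\M$, using base change in the relative Grothendieck ring, so the approach is correct and essentially identical to the source. The one small slip is attributing the existence of the pullbacks to $p_2$ being of finite type: what the base-change step actually requires is that $p_1\times p_3\colon \MM\to\M\times\M$ (and hence its base changes, the left-hand vertical maps in your squares) is of finite type, which holds because its fibres are governed by finite-dimensional extension spaces.
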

\begin{rem}
The $3$-Calabi-Yau property of the category $\A$ is not necessary for this theorem.
\end{rem}

\subsubsection{Semi-classical limit of the motivic Hall algebra}
Let $\mr{MH}_0(\A)\subset\mr{MH}(\A)$ be the $K(\mr{Var}/\C)[\mathbb{L}^{-1}]$-submodule generated by classes 
\[
[X\overset{f}{\too}\mca{M}_{\A}]
\]
with $X$ a variety.
\begin{thm}[\protect{\cite[Theorem 5.2]{bridgeland-hall}}]
\begin{itemize}
\item[\textup{(1)}] $\mathrm{MH}_0(\A)\subset \mathrm{MH}(\A)$ is a subring.
\item[\textup{(2)}] The product induced on the quotient
\[
\mathrm{MH}_{\mr{sc}}(\A):=\mathrm{MH}_{0}(\A)/(\mathbb{L}-1)\mathrm{MH}_{0}(\A)
\]
is commutative $K(\mr{Var}/\C)$-algebra.
\end{itemize}
\end{thm}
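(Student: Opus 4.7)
The strategy is to analyze how the Hall product interacts with the generating classes $[X \to \M]$ with $X$ a variety, and to isolate precisely where the stackiness enters. The governing geometric observation is that the morphism $p_2\colon \MM \to \M$ is essentially representable: the automorphism group of a short exact sequence $0 \to A \to E \to B \to 0$ (of objects in $\A$ regarded as a diagram) embeds into $\mr{Aut}(E)$, so the fibres of $p_2$ over $\C$-points of $\M$ are schemes, namely Grassmannians of subobjects of $E$.

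For part (1), take variety generators $a_i = [X_i \overset{f_i}{\to} \M]$. By definition $a_1 * a_2 = (p_2)_*(p_1\times p_3)^*([X_1\times X_2 \to \M\times\M])$. The pullback stack $\mathcal{Z} := (X_1\times X_2)\times_{\M\times\M}\MM$ projects to $X_1\times X_2$ with fibre over $(x_1,x_2)$ being the extension quotient stack $[\mr{Ext}^1(f_2(x_2),f_1(x_1))/\mr{Hom}(f_2(x_2),f_1(x_1))]$. I would stratify $X_1\times X_2$ by the constructible functions $(x_1,x_2)\mapsto \dim\mr{Hom}$ and $\dim\mr{Ext}^1$; on each stratum $\mathcal{Z}$ becomes Zariski-locally a product with such a quotient stack. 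Composing with $p_2$ and using the representability observation above, the $\mr{Hom}$-action is ``absorbed'' into the automorphism group of the middle term, so the image class in $K(\mr{St}/\M)$ is represented by the total space of an $\mr{Ext}^1$-affine bundle over the stratum, which is a variety. Summing over strata produces a representative of $a_1*a_2$ by a variety, so $\mr{MH}_0(\A)$ is closed under $*$.

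For part (2), I would compute the difference $a_1*a_2 - a_2*a_1$ fibre-by-fibre over $[E]\in\M$. Using the same stratification, over a stratum where $E_i := f_i(x_i)$ has fixed Hom and Ext dimensions, the local Hall-product contribution is the motivic class of the extension quotient $[\mr{Ext}^1(E_2,E_1)/\mr{Hom}(E_2,E_1)]$, which in $K(\mr{St}/\C)[\LL^{-1}]$ equals $\LL^{\dim\mr{Ext}^1(E_2,E_1)-\dim\mr{Hom}(E_2,E_1)} = \LL^{-\chi(E_2,E_1)}$. Reducing mod $(\LL-1)$ replaces this by $1$, eliminating the asymmetry between the two orderings; thus the commutator lies in $(\LL-1)\mr{MH}_0(\A)$, and the quotient is commutative.

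The main obstacle is bookkeeping at the level of the relative Grothendieck ring: the stratifications of $X_1\times X_2$ by Hom- and Ext-dimensions must be shown to induce identities under the Zariski-fibration relation used in the definition of $K(\mr{St}/\M)$, and the local quotient-stack model for the fibres of $(p_1\times p_3)^*$ must be lifted to a global identity compatible with pushforward by $p_2$. Handling this uniformly across (possibly infinitely many) strata, and ensuring no contribution is lost to non-reduced scheme structure on the moduli, is where the technical work concentrates; the $(\LL-1)$ factor in (2) arises exactly because it is needed to invert the motivic class $[B\mathbb{G}_m]$ that measures the residual scalar automorphisms left over after this bookkeeping.
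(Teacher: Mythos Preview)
The paper does not prove this theorem at all: it is quoted verbatim as \cite[Theorem 5.2]{bridgeland-hall} and no argument is given. So there is nothing in the present paper for your proposal to be compared against; the result is imported wholesale from Bridgeland's note on motivic Hall algebras.

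That said, your sketch is a fair outline of the argument that appears in Bridgeland's paper. The essential mechanism you identify---stratify $X_1\times X_2$ by the constructible functions $\dim\Hom$ and $\dim\Ext^1$, model the fibre of $(p_1\times p_3)^*$ as the extension stack $[\Ext^1(E_2,E_1)/\Hom(E_2,E_1)]$, and use representability of $p_2$ to see that the pushforward is carried by a variety---is exactly what Bridgeland does. For part~(2) your observation that the local weight is $\LL^{-\chi(E_2,E_1)}$, hence congruent to $1$ modulo $(\LL-1)$, is also the heart of his proof of commutativity. One point you gloss over is the precise sense in which the $\Hom$-action is ``absorbed'' by $p_2$: in Bridgeland's treatment this is made rigorous by noting that the relevant morphism of stacks is a $\Hom$-gerbe composed with an affine fibration, and both of these contribute known powers of $\LL$ in $K(\mr{St}/\M)$. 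Your final paragraph correctly flags this as the place where the real work sits, but since the paper under review simply cites the result, no further detail is expected here.
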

We define a Poisson bracket $\{-,-\}$ on $\mathrm{MH}_{\mr{sc}}(\A)$ by 
\[
\{f,g\}=\frac{f*g-g*f}{\mathbb{L}-1}\quad \mathrm{mod}(\mathbb{L}-1).
\]

\subsubsection{Completion of the motivic Hall algebra}
Note that the moduli stack has the canonical decomposition 
\[
\M=\bigsqcup_{\mathbf{v}\in C_{\A}\cap L} \mathcal{M}_{\A}(\mathbf{v}).
\]
We put
\[
\hMH(\Afd):=\prod_{\mathbf{v}\in C_{\Afd}\cap L} K(\mr{St}/\M(\mathbf{v})),
\]
then the $*$-product canonically extends to $\hMH(\Afd)$.

Let $\mca{C}\subset\mca{A}$ be an extension closed full subcategory.
Assume that the moduli stack $\mca{M}_{\mca{C}}\subset\mca{M}_{\A}$ of objects in $\mca{C}$ is algebraic.
Let $\mathrm{MH}(\mca{C})$ denote the subalgebra consisting of the elements $[f\colon \mca{X}\to\mca{M}_{\A}]$ such that $f$ factors through $\mca{M}_{\mca{C}}\subset\mca{M}_{\A}$.
We put
\[
\hMH(\mca{C}):=\prod_{\mathbf{v}\in C_\mca{C}\cap L}\Bigl(\MH(\mca{C})\cap K(\mr{St}/\M(\mathbf{v}))\Bigr).
\]
We define $\hMH_{0}(\A)$, $\hMH_{\mr{sc}}(\A)$ $\hMH_{0}(\mca{C})$ and $\hMH_{\mr{sc}}(\mca{C})$ in the same way.

\subsection{Quantum torus and integration map}
\subsubsection{Quantum torus and semi-classical limit}
The {\it quantum torus}, {\it dual quantum torus} and {\it double quantum torus}  for $\dfa$ is the $\C(t)$-vector spaces
\[
\mathrm{QT}:=\sum_{\mathbf{v}\in L}\C(t)\cdot \mathbf{y}^{\mathbf{v}}, \quad
\mathrm{QT}^\vee:=\sum_{\mathbf{w}\in M}\C(t)\cdot \mathbf{x}^{\mathbf{w}}, \quad
\mathrm{Q}\mathbb{T}:=\mathrm{QT}^\vee\otimes\mathrm{QT}
\]
with the following product structure:
\[
\mathbf{y}^\mathbf{v}\cdot\mathbf{y}^{\mathbf{v}'}=t^{\chi(\mathbf{v}',\mathbf{v})}\mathbf{y}^{\mathbf{v}+\mathbf{v}'},\quad 
\mathbf{x}^\mathbf{w}\cdot\mathbf{x}^{\mathbf{w}'}=\mathbf{x}^{\mathbf{w}+\mathbf{w}'},\quad 
y_i\cdot x_j=t^{\delta_{i,j}}x_j\cdot y_i
\]
where $y_i:=\mathbf{y}^{[s_i]}$ and $x_j:=\mathbf{x}^{[P_i]}$.

We define the surjective algebra homomorphism $\pi\colon \DQT\twoheadrightarrow \mathrm{QT}^\vee$ by
\[
x_i \otimes 1  \longmapsto  x_i,\quad 1 \otimes y_i  \longmapsto  \mathbf{x}^{[s_i]}.
\]
The kernel of $\pi$ is generated by $\{(\mathbf{x}^{[s_i]}\otimes 1) - (1\otimes y_i)\mid i \in I\}$.

Let $\QT_{0}$, $\QT_{0}^\vee$ and $\DQT_{0}$ denote the $\C[t^\pm]$-subalgebra generated by $\mathbf{y}^\mathbf{v}$'s and $\mathbf{x}^\mathbf{w}$'s.
We put 
\[
\QT_{\mr{sc},\pm}:=\QT_0/(t\pm 1)\QT_0
\]
and define $\QT_{\mr{sc},\pm}^\vee$ and $\DQT_{\mr{sc},\pm}^\vee$ in the same way. 
Let $\{-,-\}$ denote the Poisson bracket on these quotients.

\subsubsection{Integration map}
\begin{thm}[\protect{\cite[Theorem 6.12]{joyce-2}, \cite[Theorem 6.3]{bridgeland-hall}}]\label{thm_integral}
There is a unique $L$-graded linear map 
\[
I_\pm\colon \mathrm{MH}_{\mr{sc}}(\A)\to \mathrm{QT}_{\mr{sc},\pm}
\]
such that if $X$ is a variety with a map $f\colon X\to\M$ factoring through $\M(\mathbf{v})\subset \M$ then
\begin{align*}
I_+\bigl([f\colon X\to\M]\bigr)&:=e(X)\cdot \mathbf{y}^{\mathbf{v}},\\
I_-\bigl([f\colon X\to\M]\bigr)&:=\biggl(\,\sum_{n\in\Z}n\cdot e\bigl((f^*\nu_{\mca{M}_\A})^{-1}(n)\bigr)\biggr)\cdot \mathbf{y}^{\mathbf{v}}
\end{align*}
Moreover, $I_\pm$ is a Poisson algebra homomorphism.
\end{thm}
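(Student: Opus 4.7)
My plan is to follow the approach of Joyce and Bridgeland: I would first define $I_\pm$ on a generating set of $\mr{MH}_0(\A)$ via the stated formulas, then verify well-definedness, multiplicativity, and Poisson bracket compatibility in turn. Since $\mr{MH}_0(\A)$ is spanned by classes $[f\colon X \to \M]$ with $X$ a variety, it suffices to define $I_\pm$ on such generators. Well-definedness reduces to checking the three relations defining the relative Grothendieck ring: disjoint union is handled by additivity of Euler characteristic; geometric bijections are handled since $e$ and its Behrend-weighted variant depend only on the groupoid of $\C$-points; Zariski fibrations with common fibres are handled by multiplicativity of Euler characteristic under such fibrations. Descent through the semi-classical quotient $(\LL-1)\mr{MH}_0(\A)$ is automatic since $e(\mathbb{A}^1) = 1$.

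For multiplicativity $I_\pm(a*b) = I_\pm(a)\cdot I_\pm(b)$, I would unwind the Hall product \eqref{eq_hall} by stratifying $\MM$ according to the dimensions of $\Hom(B,A)$ and $\Ext^1(B,A)$, so that the fibre of $p_2$ over $(A,B) \in \M \times \M$ becomes an affine space on each stratum. In the $I_+$ case the computation is then just multiplicativity of topological Euler characteristic together with $e(\mathbb{A}^n) = 1$. In the $I_-$ case the crucial input is the Joyce--Song Behrend-function identity for $3$-Calabi-Yau categories, which gives $\nu_{\mca{M}_\A}(E) = (-1)^{\chi([A],[B])}\,\nu_{\mca{M}_\A}(A)\nu_{\mca{M}_\A}(B)$ on generic strata; this exactly produces the sign $(-1)^{\chi(\mathbf{v}_2,\mathbf{v}_1)}$ matching the quantum torus relation in $\QT_{\mr{sc},-}$.

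For Poisson bracket compatibility, I would lift the whole calculation to $\mr{MH}_0(\A)$ and $\DQT_0$ and compute $a*b - b*a$ modulo $(\LL-1)^2$. The first-order term in $\LL-1$ involves precisely the dimension differences $\dim\Ext^1(B,A) - \dim\Hom(B,A)$ accumulated in the stratification, and by the $3$-CY property these pair with $\chi$ correctly to match the derivative at $t = \pm 1$ of $t^{\chi(\mathbf{v}',\mathbf{v})} - t^{\chi(\mathbf{v},\mathbf{v}')}$ in the quantum torus. This part is mostly bookkeeping once multiplicativity is in hand.

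The main obstacle will be the Behrend-function identity in the $I_-$ case: the proof that Behrend's function on the moduli stack of objects in a $3$-Calabi-Yau category is multiplicative up to the sign $(-1)^\chi$ on extensions. This depends on the local structure of $\M$ as a critical locus (of the Chern-Simons functional on the dg category) together with Behrend's microlocal formula, and is the one place where the $3$-Calabi-Yau hypothesis is essential rather than cosmetic. Without this identity even multiplicativity of $I_-$ fails; with it, the $I_-$ case runs in parallel with the $I_+$ case.
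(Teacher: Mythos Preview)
The paper does not prove this theorem at all: it is quoted as a result from the literature, with citations to Joyce \cite[Theorem 6.12]{joyce-2} and Bridgeland \cite[Theorem 6.3]{bridgeland-hall}, and no argument is given in the present paper. So there is nothing to compare your proposal against here.

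That said, your outline is essentially the structure of the proofs in those cited references. A couple of remarks on accuracy. First, your formulation of the Behrend-function identity as $\nu_{\mca{M}_\A}(E) = (-1)^{\chi([A],[B])}\nu_{\mca{M}_\A}(A)\nu_{\mca{M}_\A}(B)$ ``on generic strata'' is too loose: the actual Joyce--Song identities (see \cite[Theorem 5.11]{joyce-song}) are statements about weighted Euler characteristics of the fibres of $p_2$ over $\M\times\M$, not pointwise identities on a dense open; you will need the full integrated form to carry the argument through. Second, in this setting the critical-locus description of $\M$ is available directly (the Jacobi algebra is by definition cut out by $\partial_a W$ inside representations of $Q$), so the Behrend-function input is more concrete here than for a general $3$-Calabi-Yau category; Bridgeland's treatment in \cite{bridgeland-hall} exploits this. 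Otherwise your identification of the main obstacle is correct: the $I_+$ case is elementary once well-definedness is in hand, and the $I_-$ case stands or falls on the Behrend-function behaviour under extensions.
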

\begin{conj}[\protect{\cite{ks}}]\label{conj_ks}
There exists\footnote{They construct $I_{\mr{KS}}$ using {\it motivic Milnor fiber} and give a proof of the conjecture modulo certain expected formula on motivic Milnor fibers.} an $L$-graded $\Lambda$-algebra homomorphism
\[
I_{\mr{KS}}\colon \mathrm{MH}(\A)\to \mathrm{QT}(\A)
\]
defined by taking ``motivic invariants''. 
\end{conj}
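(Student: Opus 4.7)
The plan is to follow the Kontsevich--Soibelman strategy, realising $\M(\mathbf{v})$ as a critical-locus stack and defining $I_{\mr{KS}}$ by taking the motivic vanishing cycle of the trace of the potential. Fix a dimension vector $\mathbf{v}\in L$, let $\mr{Rep}(Q,\mathbf{v})$ be the smooth affine space of $\mathbf{v}$-dimensional representations of the underlying quiver $Q$, acted on by $G_\mathbf{v}:=\prod_i\mr{GL}(v_i)$. The potential $W$ defines a $G_\mathbf{v}$-invariant regular function $\mr{Tr}(W)_\mathbf{v}\colon \mr{Rep}(Q,\mathbf{v})\to \mb{A}^1$, and a standard computation identifies the stacky critical locus $\mr{crit}(\mr{Tr}(W)_\mathbf{v})/G_\mathbf{v}$ with $\M(\mathbf{v})$. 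For a variety $X$ with a morphism $f\colon X\to \M(\mathbf{v})$, pull back along a smooth presentation of the stack to get a scheme $\wt{X}$ with a $G_\mathbf{v}$-action and a regular function $\wt{f}^*\mr{Tr}(W)_\mathbf{v}$, and set
\[
I_{\mr{KS}}\bigl([f\colon X\to \M(\mathbf{v})]\bigr) := \frac{[\phi_{\wt{f}^*\mr{Tr}(W)_\mathbf{v}}(\wt{X})]}{[G_\mathbf{v}]}\cdot \mathbf{y}^\mathbf{v},
\]
where $\phi$ denotes the motivic vanishing cycle and the quotient by $[G_\mathbf{v}]$ is understood in the appropriate equivariant Grothendieck ring of varieties with a $\hat{\mu}$-action, up to the normalising Lefschetz-power twist required to match the sign conventions of $\QT$.

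Next I would check that this formula descends to a well-defined map on $\MH(\A)$, namely that it respects the three defining relations of the relative Grothendieck ring of stacks. Additivity over disjoint unions is immediate from additivity of motivic vanishing cycles. Invariance under geometric bijections reduces to the fact that such morphisms induce bijections on $\C$-points and that $\phi$ depends only on the underlying reduced structure. Invariance under Zariski fibrations with common fibre follows from the product formula for motivic nearby and vanishing cycles applied fibrewise, using that $\mr{Tr}(W)$ pulls back in a fibrewise trivial way through the stacky presentation. The $L$-grading is built in via the explicit factor $\mathbf{y}^{\mathbf{v}}$, and $\Lambda$-linearity is tautological.

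The heart of the matter, and the main obstacle, is multiplicativity: for classes $\alpha,\beta\in \MH(\A)$ one must prove $I_{\mr{KS}}(\alpha*\beta)=I_{\mr{KS}}(\alpha)\cdot I_{\mr{KS}}(\beta)$ in $\DQT$. Unwinding the definition of $*$ through the correspondence $\M\times\M\xleftarrow{p_1\times p_3}\MM\xrightarrow{p_2}\M$, the left-hand side is computed by the motivic vanishing cycle of $\mr{Tr}(W)$ on a bundle over $\M\times\M$ whose fibre over $(A,C)$ is an affine torsor over $\mr{Ext}^1(C,A)$, while the right-hand side involves the vanishing cycle of the direct-sum potential on the product $\M\times\M$. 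Matching the two, up to the quantum-torus twist $t^{\chi([C],[A])}$, amounts precisely to the Kontsevich--Soibelman \emph{motivic integral identity}: for a regular function which is quadratic-plus-constant along the linear fibres of a vector bundle $V\to B$, the integral of its motivic vanishing cycle along the fibres equals, up to an explicit Lefschetz-power factor, the vanishing cycle on the zero section. This identity lies outside the presently established toolkit in the $G_\mathbf{v}$-equivariant setting, and establishing it (together with the motivic Thom--Sebastiani decomposition needed to split $\mr{Tr}(W)$ across an extension) is the decisive technical hurdle on which Conjecture \ref{conj_ks} rests; once granted, the identification of the twist factor with $t^{\chi}$ is a direct computation of the relative dimension of $\MM\to\M\times\M$ and the signs coming from the nondegenerate quadratic form on $\mr{Ext}^1\oplus \mr{Ext}^1(-)^\vee$ produced by $3$-Calabi--Yau duality.
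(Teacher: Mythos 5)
What you have written is not a proof of the statement, and indeed the paper contains none: Conjecture \ref{conj_ks} is stated as a conjecture attributed to Kontsevich--Soibelman, with the footnote recording exactly the situation you arrive at, namely that $I_{\mr{KS}}$ is constructed via motivic Milnor fibres and the homomorphism property is established only modulo an expected identity on motivic Milnor fibres. Your proposal reproduces that reduction rather than closing it: after setting up the critical-locus description of $\M(\mathbf{v})$ and the vanishing-cycle definition, you yourself defer the decisive step --- multiplicativity with respect to the Hall product $*$ --- to the Kontsevich--Soibelman motivic integral identity together with an equivariant motivic Thom--Sebastiani statement, and you acknowledge these are not available in the required generality. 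That deferred identity is precisely why the statement is a conjecture, so the proposal is a construction plus a reduction to an open problem, not a proof. Note also that the paper is deliberately organized so as not to depend on this conjecture: Conjecture \ref{conj_ks} is used only in \S\ref{subsec_idea} to explain the idea, while the actual arguments run through the integration map $I_\pm$ into the semiclassical limit $\QT_{\mr{sc},\pm}$ of Theorem \ref{thm_integral} (Joyce, Bridgeland), which is a theorem.

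Two further points would need repair even if the integral identity were granted. First, the motivic vanishing cycle must be taken on the smooth ambient space $\mr{Rep}(Q,\mathbf{v})$ (producing a constructible motivic weight supported on the critical locus) and then pulled back along $f\colon X\to\M(\mathbf{v})$; applying $\phi$ to the pulled-back function on the generally singular $\wt{X}$ is not the correct operation. Second, invariance under geometric bijections does not follow from the assertion that $\phi$ ``depends only on the underlying reduced structure'': the motivic weight depends on the local germ of the potential, and one must show it is intrinsic to the stack $\M$, independent of the chosen critical chart and presentation (this is part of what the Kontsevich--Soibelman formalism, including orientation data, is designed to address), before the formula can be claimed to descend to $\MH(\A)$.
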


\begin{rem}
Since it is $L$-graded, the homomorphism $I$ (and $I_{\mr{KS}}$ if it exists) extends to the completion. We use the same symbol for the extended homomorphism.

\end{rem}

\subsection{Absence of poles}\label{subsec_absence}
Let $\mca{C}$ be one of the categories $\A$, $\A_\kk$, $\mca{T}_\kk$, $\mca{T}_\kk[-1]$, $\mca{T}_\kk^\bot$ and $\mca{S}(r)$.
As we showed in \S \ref{subsec_Tstr}, we have a Bridgeland's stability condition $(Z,\mca{P})$ on $\dfa$ such that
\[
\mca{P}((0,1])=\A,\quad \mca{P}((0,\phi])=\mca{C}
\]
for some $0<\phi\leq 1$.
By the results in \cite{joyce-1}, we get the algebraic moduli stacks $\mca{M}_\mca{C}$ of objects in $\mca{C}$.

We put
\begin{align}\label{eq_ep}
\ep_{\mca{C}}:=\log(1+\mca{M}_{\mca{C}}):=\sum_{l\geq 1}\frac{(-1)^l}{l}\mca{M}_{\mca{C}}*\cdots*\mca{M}_{\mca{C}}\in \hMH(\mca{C})
\end{align}
and $\tep_{\mca{C}}:=(\mathbb{L}-1)\ep_{\mca{C}}\in \hMH(\mca{C})$.
Then we have
\begin{equation}\label{eq_exp}
\mca{M}_{\mca{C}}=\exp(\ep_{\mca{C}}):=\sum_{l\geq 1}\frac{1}{l!}\,\ep_{\mca{C}}*\cdots*\ep_{\mca{C}}.
\end{equation}
\begin{thm}[\protect{\cite[Section 6.2]{joyce-4}}]\label{thm_pole}
$\tepk\in \hMH_{0}(\mca{C})$.
\end{thm}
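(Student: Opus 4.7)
The plan is to follow Joyce's framework of stack functions and the virtual-rank projection machinery developed in \cite{joyce-4}. The starting point is that the moduli stack $\mca{M}_\mca{C}$ of objects in $\mca{C}$ is an algebraic stack of finite type locally with affine geometric stabilizers (the stabilizer at $E$ is the unit group of $\End(E)$, hence an algebraic group with $\mathbb{G}_m$ sitting inside the center as the group of scalar automorphisms). Thus every class in $\MH(\mca{C})$ decomposes according to Joyce's virtual rank projections $\Pi^\mr{vi}_n$, which peel off components indexed by the minimal rank of the stabilizer's toral part. The relevant observation for this theorem is that $\ep_\mca{C} = \log(1+\mca{M}_\mca{C})$ is, by construction, the virtual rank $1$ component: the combinatorics of the logarithm formula exactly cancels contributions from filtrations whose graded pieces come from non-indecomposable configurations, leaving only the "virtually indecomposable" stratum supported on objects whose stabilizer has toral rank $\geq 1$ (realized by the central $\mathbb{G}_m$).

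The first step I would carry out is to expand \eqref{eq_ep} combinatorially: the convolution product $\mca{M}_\mca{C}*\cdots*\mca{M}_\mca{C}$ ($l$ factors) is, by definition of $p_2$ and $(p_1\times p_3)^*$, the stack of $l$-step filtrations in $\mca{C}$ mapping to $\mca{M}_\mca{C}$ via the total object. Grouping these filtrations by underlying object $E$, the alternating sum $\sum_{l\geq 1}\frac{(-1)^{l-1}}{l}\mca{M}^{*l}_\mca{C}$ reduces, over the connected component containing $E$, to a weighted count of filtration types on $E$ which by a Möbius-type identity (appearing throughout Joyce's series) collapses onto the indecomposable stratum.

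The second and central step is the stabilizer reduction. On the virtually indecomposable stratum, one can write the tautological class as $[\mca{M}^\mr{ind}_\mca{C}/\mathbb{G}_m]$ times a coarse moduli representable by a variety (or a $K(\mr{Var}/\C)$-class), after a stratification by the dimension of the automorphism group. Multiplying by $\mathbb{L}-1 = [\mathbb{G}_m]$ kills exactly the residual gerbe structure and produces a class of the form $[X\to \mca{M}_\mca{C}]$ with $X$ a variety. This is the only place one uses the factor $(\mathbb{L}-1)$, and it explains why $\ep_\mca{C}$ itself need not lie in $\MH_0(\mca{C})$ while $\tep_\mca{C}$ does. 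Finally, one sums over $\mathbf{v}\in C_\mca{C}\cap L$ to land inside $\hMH_0(\mca{C})$.

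The main obstacle is the second step: controlling the stabilizers uniformly across the strata and showing that the logarithm really lands in the image of the virtual-rank-$1$ projection. Joyce handles this via the projection operators $\Pi^\mr{vi}_n$ acting on the algebra of stack functions with algebra stabilizers and an explicit identity $\log(1+\mca{M}_\mca{C}) = \Pi^\mr{vi}_1(\mca{M}_\mca{C})$ (after passing to the appropriate framework), combined with the statement that $(\mathbb{L}-1)\cdot \Pi^\mr{vi}_1$ factors through classes representable by varieties. I would therefore invoke these results of Joyce directly rather than reconstruct them, and merely verify that the hypotheses (finite-type, affine stabilizers, existence of the moduli stack via the stability condition discussion of \S\ref{subsec_Tstr}) are met in each of the cases $\mca{C}\in\{\A,\A_\kk,\mca{T}_\kk,\mca{T}_\kk[-1],\mca{T}_\kk^\bot,\mca{S}(r)\}$.
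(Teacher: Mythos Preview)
The paper does not prove this theorem at all: it is stated with attribution to \cite[Section 6.2]{joyce-4} and no proof environment follows. Your proposal is therefore already more detailed than the paper's treatment, and the outline you give---identify $\ep_\mca{C}$ with Joyce's virtual-rank-$1$ projection of $\mca{M}_\mca{C}$, then use that $(\mathbb{L}-1)\Pi^\mr{vi}_1$ lands in classes representable by varieties---is the correct sketch of Joyce's argument and matches the citation.
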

We put
\[
\hep_\mca{C}:=\tep_\mca{C}|_{\mathbb{L}-1}\in \widehat{\mr{MH}}_{\mr{sc}}.
\]

\section{Proof}\label{sec_proof}
\subsection{Hall algebra identities}\label{subsec_eq_in_MH}
Throughout this subsection. let $\mca{C}$ be one of the categories $\A$, $\A_\kk$, $\mca{T}_\kk$, $\mca{T}_\kk[-1]$, $\mca{T}_\kk^\bot$ and $\mca{S}_{(r)}$.
Let $\mca{M}_\mca{C}$ be the moduli stack of objects in $\mca{C}$.

For an element $P\in\pera$ we define the following moduli stacks:
\[
\mathfrak{Hom}(P,{\mca{C}}) := \{(f,E)\mid E\in {\mca{C}}, f\in \Hom(P,E)\}.
\]
\begin{prop}[\protect{\cite[Lemma 4.3]{bridgeland-dtpt}}]\label{prop_mhsi}
\[
\mr{Hilb}_{J}(i)=\mf{Hom}(P_i,{\mca{A}})*\mca{M}_\A^{-1}\quad \text{\textup{(motivic Hilbert scheme identity)}}.
\]
\end{prop}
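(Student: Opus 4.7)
The plan is to identify both sides with the class of a single moduli stack in $\widehat{\mr{MH}}(\mca{A})$ via an explicit geometric bijection, and then appeal to the defining relations of the relative Grothendieck ring of stacks.

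First I would rewrite the claim in the equivalent form
\[
\mf{Hom}(P_i,\mca{A}) \;=\; \mr{Hilb}_J(i) * \mca{M}_\A
\]
in $\widehat{\mr{MH}}(\mca{A})$. This rearrangement is legitimate because the degree-zero part of $\mca{M}_\A$ is the class of the zero object, so $\mca{M}_\A$ is of the form $1 + (\text{terms of strictly positive class})$ and is invertible in the completed algebra. By the definition of the Hall product via the span
\[
\mca{M}_\A \times \mca{M}_\A \xleftarrow{p_1 \times p_3} \MM \xrightarrow{p_2} \mca{M}_\A,
\]
the right-hand side is represented by the stack parametrizing tuples
\[
\bigl(\,P_i \twoheadrightarrow V,\ \ 0 \to V \to E \to F \to 0\,\bigr)
\]
with $V, E, F \in \mca{A}$, mapped to $E\in\mca{M}_\A$.

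The key step is to construct mutually inverse morphisms of stacks, compatible with the forgetful maps to $\mca{M}_\A$, between the stack above and $\mf{Hom}(P_i,\mca{A})$. In one direction I would send $(P_i \twoheadrightarrow V,\ 0 \to V \to E \to F \to 0)$ to the composition $f : P_i \twoheadrightarrow V \hookrightarrow E$, viewed as an element of $\mf{Hom}(P_i,\mca{A})$. In the other direction, I would send $(f,E)$ to its canonical image factorisation $P_i \twoheadrightarrow \im f \hookrightarrow E$ together with the short exact sequence $0 \to \im f \to E \to \coker f \to 0$. On $\C$-valued groupoids these operations are inverse to one another and commute with the projections to $\mca{M}_\A$; thus the morphism is a geometric bijection in the sense of \cite[Definition 3.1]{bridgeland-hall}, and by relation (2) in the definition of $K(\mr{St}/\mca{M}_\A)$ the two classes agree.

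The main subtlety I expect is verifying that the construction gives an equivalence of groupoids in families, not merely a bijection of isomorphism classes on $\C$-points: two tuples producing the same $f$ up to isomorphism of $E$ must be related by a unique isomorphism of the extra data covering the identity on $E$, and this reduces to the uniqueness (up to unique isomorphism) of the image factorisation in the abelian category $\mca{A}$. Representability is immediate since only extra structure on a fixed object $E$ is being compared. Once these standard abelian-category checks are carried out in families, the claimed identity $\mr{Hilb}_J(i) = \mf{Hom}(P_i,\mca{A}) * \mca{M}_\A^{-1}$ follows by multiplying through by $\mca{M}_\A^{-1}$ in $\widehat{\mr{MH}}(\mca{A})$.
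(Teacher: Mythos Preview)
Your argument is correct and is precisely the one the paper has in mind: the paper does not supply its own proof here but simply cites \cite[Lemma~4.3]{bridgeland-dtpt}, and the image-factorisation bijection you describe is exactly Bridgeland's argument (the same template is invoked again in the paper's one-line proof of Proposition~\ref{prop_grass} and spelled out for Proposition~\ref{prop_hom2}). One minor wording issue: you do not need, and will not in general get, a strict inverse \emph{morphism of stacks} from $\mf{Hom}(P_i,\mca{A})$ back to the tuple stack; it suffices that the forward map (composition) is representable and induces an equivalence on $\C$-valued groupoids, which is what you actually verify.
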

\begin{prop}[\protect{\cite[Lemma 4.1]{bridgeland-dtpt}}]\label{prop_mtpi}
\[
\mca{M}_{\A}=\mca{M}_{\mathcal{T}_{\kk}}*\mca{M}_{\mathcal{T}_{\kk}^\bot},\quad \mca{M}_{\A_\kk}=\mca{M}_{\mathcal{T}_{\kk}^\bot}*\mca{M}_{\mathcal{T}_{\kk}[-1]}\quad \text{\textup{(motivic torsion pair identity)}}.
\]
\end{prop}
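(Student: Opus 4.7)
The plan is to realize both identities as motivic lifts of the canonical short exact sequence attached to a torsion pair, following Bridgeland's strategy in \cite[Lemma 4.1]{bridgeland-dtpt}.

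For the first identity, the corollary at the end of \S\ref{subsec_comp} furnishes a torsion pair $(\mca{T}_\kk,\mca{F}_\kk)$ of $\mca{A}$; interpreting $\mca{T}_\kk^\bot$ as $\mca{F}_\kk$ (the right orthogonal of $\mca{T}_\kk$ inside $\mca{A}$), every $X\in\mca{A}$ sits in a unique short exact sequence
\[
0\to T\to X\to F\to 0,\qquad T\in\mca{T}_\kk,\ F\in\mca{F}_\kk.
\]
By the definition \eqref{eq_hall} of the Hall product, $\mca{M}_{\mca{T}_\kk}*\mca{M}_{\mca{F}_\kk}$ is represented by the stack of such short exact sequences, mapped to $\mca{M}_\A$ via the middle term $p_2$. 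Uniqueness and functoriality of the torsion filtration promote $p_2$ to a geometric bijection: on $\C$-valued points, sending $X$ to its torsion filtration is inverse to the middle-term map, so the induced functor of groupoids is an equivalence of categories.

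For the second identity, I would apply the same argument inside the heart $\mca{A}_\kk=\mca{A}^{(\mca{F}_\kk,\mca{T}_\kk[-1])}$, which by construction carries the tilted torsion pair $(\mca{F}_\kk,\mca{T}_\kk[-1])$. Every $X\in\mca{A}_\kk$ fits uniquely, inside $\mca{A}_\kk$, into
\[
0\to F\to X\to T[-1]\to 0,\qquad F\in\mca{F}_\kk,\ T[-1]\in\mca{T}_\kk[-1],
\]
and the identical geometric-bijection argument yields $\mca{M}_{\mca{A}_\kk}=\mca{M}_{\mca{T}_\kk^\bot}*\mca{M}_{\mca{T}_\kk[-1]}$.

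The main obstacle is ensuring the bookkeeping is valid at the level of stacks rather than just $\C$-points. One needs (i) algebraicity of the moduli stacks $\mca{M}_{\mca{T}_\kk}$, $\mca{M}_{\mca{F}_\kk}$, $\mca{M}_{\mca{T}_\kk[-1]}$, $\mca{M}_{\mca{A}_\kk}$; (ii) algebraicity of the corresponding stacks of short exact sequences, so that the Hall product is actually defined in $\mathrm{MH}$; and (iii) representability of the middle-term morphism. All three are supplied by the stability-theoretic characterization of the relevant hearts from \S\ref{subsec_Tstr}, combined with Joyce's results invoked in \S\ref{subsec_absence}: once every heart involved is realized as $\mca{P}((0,\phi])$ for some Bridgeland stability condition on $\dfa$, Joyce's general machinery produces the required algebraic stacks and extension stacks, and the geometric bijection then follows formally from uniqueness of the torsion decomposition in each heart.
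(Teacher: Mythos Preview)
Your argument is correct and is exactly the content of the cited lemma: the paper gives no independent proof here but simply invokes \cite[Lemma 4.1]{bridgeland-dtpt}, whose proof is precisely the geometric-bijection argument from the torsion filtration that you have written out. Your remarks on algebraicity via \S\ref{subsec_Tstr} and \S\ref{subsec_absence} also match how the paper sets things up, so there is nothing to add.
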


For an element $R\in \A$, let $\mr{Grass}(R,{\Afd})$ be the moduli stack of elements in $\A$ equipped with surjections from $R$:
\[
\mr{Grass}(R,{\Afd})  := \{(f,E)\mid E\in {\Afd}, f\in \Hom(R,E), \text{$f$\,:\,surjective}\}.
\]
\begin{prop}\label{prop_grass}
\begin{equation}\label{eq_grass}
\mf{Hom}(R,{\Afd})=
\mf{Grass}(R,{\Afd})*
\mca{M}_{\Afd}^{-1}.
\end{equation}
\end{prop}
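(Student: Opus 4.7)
The plan is to reduce the proposition to a geometric bijection between the Hall-algebra product $\mf{Grass}(R,\Afd) * \mca{M}_\Afd$ and $\mf{Hom}(R,\Afd)$; the stated identity then follows by right-multiplying by $\mca{M}_\Afd^{-1}$ in the completion $\hMH(\Afd)$. This is precisely the same mechanism as in Proposition \ref{prop_mhsi}, where the projectivity of $P_i$ played no essential role; replacing $P_i$ by an arbitrary $R \in \A$ gives the present statement.

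Unwinding the defining diagram for $*$, the $\C$-valued points of $\mf{Grass}(R,\Afd) * \mca{M}_\Afd$ are tuples $(f\colon R \twoheadrightarrow E_1,\; 0 \to E_1 \to E_2 \to E_3 \to 0)$ with structure map recording the middle term $E_2$. I would construct a morphism to $\mf{Hom}(R,\Afd)$ sending such a datum to the composite $g := (E_1 \hookrightarrow E_2) \circ f$. Its inverse is given by the canonical image--cokernel factorization: a morphism $g\colon R \to E_2$ is sent to
\[
\bigl(R \twoheadrightarrow \im(g),\; 0 \to \im(g) \to E_2 \to \coker(g) \to 0\bigr).
\]
Surjectivity of $f$ forces $E_1 = \im(g)$, and uniqueness of the image--cokernel factorization in the Abelian category $\A$ ensures that the two assignments are mutually inverse on $\C$-valued groupoids.

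The main obstacle is not the combinatorial bookkeeping above but the stack-theoretic promotion of the backward assignment: forming $\im(g)$ in families requires a stratification by the dimension of the image, since the fibrewise image can jump along a family. Nevertheless, by construction a geometric bijection in the sense of \cite[Definition 3.1]{bridgeland-hall} only has to be an equivalence on $\C$-valued groupoids, so once the pointwise inversion is in place, the relations imposed in the definition of $K(\mr{St}/\M)$ yield the desired equality in the Hall algebra. Multiplying on the right by $\mca{M}_\Afd^{-1}$, which is a well-defined element of $\hMH(\Afd)$ via the geometric series expansion around the identity component, concludes the proof.
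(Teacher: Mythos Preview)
Your argument is correct and is precisely what the paper's one-line proof (``We can prove in the same way as the motivic Hilbert scheme identities'') intends you to do: the image/cokernel factorisation of an arbitrary $R\to E$ gives the geometric bijection, exactly as in Proposition~\ref{prop_mhsi}. One bookkeeping point: your bijection $\mf{Grass}(R,\Afd)*\mca{M}_\Afd \cong \mf{Hom}(R,\Afd)$, after right-multiplying by $\mca{M}_\Afd^{-1}$, yields $\mf{Grass}(R,\Afd)=\mf{Hom}(R,\Afd)*\mca{M}_\Afd^{-1}$ rather than the displayed~\eqref{eq_grass}; this is the form actually used in the proof of Proposition~\ref{prop_hom4} and the true analogue of Proposition~\ref{prop_mhsi}, so the two sides of~\eqref{eq_grass} appear to be transposed in the paper.
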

\begin{proof}
We can prove in the same way as the motivic Hilbert scheme identities.
\end{proof}
\begin{prop}\label{prop_hom2}
\begin{equation}\label{eq_hom2}
\mf{Hom}(P_{\kk,i}[1],{\mca{T}_\kk})*\mca{M}_{\mca{T}^\bot_\kk}
=
\mf{Hom}(P_{\kk,i}[1],\A).
\end{equation}
\end{prop}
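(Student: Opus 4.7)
The plan is to exhibit a geometric bijection between the two stacks by combining the torsion pair $(\mca{T}_\kk,\mca{F}_\kk=\mca{T}_\kk^\bot)$ of $\A$ from Proposition \ref{prop_mtpi} with a $\mr{Hom}$-vanishing that comes from $P_{\kk,i}$ being projective in $\A_\kk$.

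First I would unpack the Hall product on the left hand side. By the definition of $*$ in \eqref{eq_hall}, the stack $\mf{Hom}(P_{\kk,i}[1],\mca{T}_\kk)*\mca{M}_{\mca{T}_\kk^\bot}$ parametrizes tuples $(T,F,f,\sigma)$ where $T\in\mca{T}_\kk$, $F\in\mca{T}_\kk^\bot$, $f\in\Hom(P_{\kk,i}[1],T)$, and $\sigma$ is a short exact sequence $0\to T\to E\to F\to 0$ in $\A$, with the forgetful map to $\mca{M}_\A$ recording $E$. The right hand side parametrizes pairs $(E,g)$ with $E\in\A$ and $g\in\Hom(P_{\kk,i}[1],E)$. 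There is a tautological morphism $\Phi$ from the first stack to the second sending $(T,F,f,\sigma)$ to $(E,\iota\circ f)$, where $\iota\colon T\hookrightarrow E$ is the inclusion in $\sigma$.

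Next I would show that $\Phi$ is a geometric bijection. By Proposition \ref{prop_mtpi}, any $E\in\A$ has a canonical torsion sequence $0\to T_E\to E\to F_E\to 0$ with $T_E\in\mca{T}_\kk$, $F_E\in\mca{T}_\kk^\bot$, and this is functorial. So it suffices to check, for each $E\in\A$, that the composition map
\[
\iota_*\colon \Hom(P_{\kk,i}[1],T_E)\longrightarrow \Hom(P_{\kk,i}[1],E)
\]
is a bijection. Applying $\Hom_{\da}(P_{\kk,i}[1],-)$ to the triangle $T_E\to E\to F_E\to T_E[1]$ yields a long exact sequence, and $\iota_*$ is an isomorphism provided that $\Hom_{\da}(P_{\kk,i}[1],F_E)=0$ and $\Hom_{\da}(P_{\kk,i}[1],T_E[1])=0$.

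These two vanishings are the core of the argument, and both follow from the projectivity of $P_{\kk,i}$ in $\A_\kk$ combined with the placement of $\mca{F}_\kk$ and $\mca{T}_\kk[-1]$ inside $\A_\kk$. Explicitly, since $F_E\in\mca{F}_\kk\subset\A_\kk$ one has $H^0_{\A_\kk}(F_E[-1])=0$, and therefore
\[
\Hom_{\da}(P_{\kk,i}[1],F_E)=\Hom_{\da}(P_{\kk,i},F_E[-1])=\Hom_{\A_\kk}(P_{\kk,i},H^0_{\A_\kk}(F_E[-1]))=0.
\]
Similarly, $T_E[-1]\in\mca{T}_\kk[-1]\subset\A_\kk$, so
\[
\Hom_{\da}(P_{\kk,i}[1],T_E[1])=\Hom_{\da}(P_{\kk,i},T_E)=\Ext^1_{\A_\kk}(P_{\kk,i},T_E[-1])=0
\]
by projectivity. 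I do not foresee a real obstacle here: once the pointwise bijection of $\Hom$-sets is established, the standard Bridgeland-style argument (as used for Propositions \ref{prop_mhsi}, \ref{prop_mtpi} and \ref{prop_grass}) upgrades it to a geometric bijection of stacks, yielding the stated identity in $\hMH(\A)$.
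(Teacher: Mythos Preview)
Your approach is the same as the paper's: construct the composition map to $\mf{Hom}(P_{\kk,i}[1],\A)$, then use the torsion decomposition together with two $\Hom$-vanishings to conclude that it induces an equivalence on $\C$-points. The paper's two vanishings are $\Hom(P_{\kk,i}[1],Z)=\Hom(P_{\kk,i}[1],Z[-1])=0$ for $Z\in\mca{T}_\kk^\bot$, i.e.\ exactly $\Hom(P_{\kk,i}[1],F_E)=0$ and $\Hom(P_{\kk,i}[1],F_E[-1])=0$.

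There is one slip in your write-up. From the triangle $T_E\to E\to F_E\to T_E[1]$ the long exact sequence reads
\[
\cdots\to\Hom(P_{\kk,i}[1],F_E[-1])\to\Hom(P_{\kk,i}[1],T_E)\xrightarrow{\iota_*}\Hom(P_{\kk,i}[1],E)\to\Hom(P_{\kk,i}[1],F_E)\to\cdots,
\]
so the condition needed for \emph{injectivity} of $\iota_*$ is $\Hom(P_{\kk,i}[1],F_E[-1])=0$, not $\Hom(P_{\kk,i}[1],T_E[1])=0$. Your vanishing $\Hom(P_{\kk,i}[1],T_E[1])=\Ext^1_{\A_\kk}(P_{\kk,i},T_E[-1])=0$ is correct but sits one step further along the sequence and gives no information about $\iota_*$. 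The fix is immediate: since $P_{\kk,i}\in\A_\kk$ and $F_E\in\mca{F}_\kk\subset\A_\kk$, one has $\Hom_{\da}(P_{\kk,i}[1],F_E[-1])=\Hom_{\da}(P_{\kk,i},F_E[-2])=0$ directly from the t-structure axiom $\Hom(\A_\kk,\A_\kk[-2])=0$, without invoking projectivity. With this correction your argument goes through and matches the paper's proof.
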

\begin{proof}
As in \cite[\S 4]{bridgeland-dtpt}, a $\C$-valued point of $\mf{Hom}(P_{\kk,i}[1],{\mca{T}_\kk})*\mca{M}_{\mca{T}^\bot_\kk}
$ is represented by a diagram 
\[
\xymatrix{
        & P_{\kk,i}[1]\ar[d] &         &         & \\
0\ar[r] & Y\ar[r]            & X\ar[r] & Z\ar[r] & 0
}
\]
with $F\in \mca{T}_\kk$ and $Z \in \mca{T}{}^\bot_\kk$.
By composing the morphisms in the diagram, we get a family of morphism $P_{\kk,i}[1]\to X$ on $\mf{Hom}(P_{\kk,i}[1],{\mca{T}_\kk})*\mca{M}_{\mca{T}^\bot_\kk}
$, which induces a morphism from $\mf{Hom}(P_{\kk,i}[1],{\mca{T}_\kk})*\mca{M}_{\mca{T}^\bot_\kk}
$ to the moduli stack $\mf{Hom}(P_{\kk,i}[1],\A)$.

Since $\Hom(P_{\kk,i}[1],Z)=\Hom(P_{\kk,i}[1],Z[-1])=0$, we have 
\[
\Hom(P_{\kk,i}[1],X)=\Hom(P_{\kk,i}[1],Y).
\]
The axiom of the torsion pair and the equation above provide an equivalence of $\C$-valued points induced by the morphism from $\mf{Hom}(P_{\kk,i}[1],{\mca{T}_\kk})*\mca{M}_{\mca{T}^\bot_\kk}
$ to  $\mf{Hom}(P_{\kk,i}[1],\A)$.
\end{proof}
The following lemma is clear:
\begin{lem}
\begin{equation}\label{eq_hom3}
\mf{Hom}(P_{\kk,i}[1],\A)=\mf{Hom}(R_{\kk,i},\A).
\end{equation}
\end{lem}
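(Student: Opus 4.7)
The plan is to reduce the asserted equality of moduli stacks to the object-level identification $\Phi_\kk^{-1}(P_{\kk,i})[1]\simeq R_{\kk,i}$ in $\mca{D}\Gamma$. Given this, both $\mf{Hom}(P_{\kk,i}[1],\mca{A})$ and $\mf{Hom}(R_{\kk,i},\mca{A})$ parametrize the same groupoid of pairs $(f,E)$ with $E\in\mca{A}$ and $f$ a morphism in $\mca{D}\Gamma$ out of the (now common) source, and the equality of stacks is automatic.

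To establish the identification, the starting point is that $\Phi_\kk^{-1}(P_{\kk,i})\in\bar{\mca{A}}_\kk=\bar{\mca{A}}^{(\bar{\mca{F}}_\kk,\bar{\mca{T}}_\kk[-1])}$, so its canonical-t-structure cohomology is concentrated in degrees $0$ and $1$, with $H^{1}_{\bar{\mca{A}}}(\Phi_\kk^{-1}(P_{\kk,i}))=R_{\kk,i}\in\bar{\mca{T}}_\kk$ by definition. The claim is therefore equivalent to the vanishing of the torsion-free component $H^{0}_{\bar{\mca{A}}}(\Phi_\kk^{-1}(P_{\kk,i}))\in\bar{\mca{F}}_\kk$, which would upgrade $\Phi_\kk^{-1}(P_{\kk,i})$ to an object of $\bar{\mca{T}}_\kk[-1]$ and yield $\Phi_\kk^{-1}(P_{\kk,i})[1]\simeq R_{\kk,i}\in\mca{T}_\kk\subset\mca{A}$.

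I would extract this vanishing by applying $\Phi_\kk^{-1}$ to the canonical truncation triangle $\tau^{\leq -1}(\Gamma_{\kk,i})\to\Gamma_{\kk,i}\to P_{\kk,i}$ in $\mca{D}\Gamma_\kk$ and chasing the long exact sequence of $\bar{\mca{A}}$-cohomology. Because $\Phi_\kk^{-1}$ intertwines the canonical t-structure of $\mca{D}\Gamma_\kk$ with the tilted one on $\mca{D}\Gamma$ whose heart is $\bar{\mca{A}}_\kk$, the term $\Phi_\kk^{-1}(\tau^{\leq -1}\Gamma_{\kk,i})$ sits in the non-positive part of the $\bar{\mca{A}}_\kk$-t-structure; this is exactly what forces the equality $H^{1}_{\bar{\mca{A}}}(\Phi_\kk^{-1}(\Gamma_{\kk,i}))=H^{1}_{\bar{\mca{A}}}(\Phi_\kk^{-1}(P_{\kk,i}))$ already recorded in the definition of $R_{\kk,i}$, and the analogous amplitude estimate combined with the fact that $P_{\kk,i}$ is the projective cover of $s_{\kk,i}$ should deliver the $H^{0}$-vanishing.

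The main obstacle is this last step: obtaining the $\bar{\mca{F}}_\kk$-vanishing cleanly rather than by an explicit induction along $\kk=(k_1,\ldots,k_l)$ via Lemmas \ref{lem_23}--\ref{lem_24}. Once it is in hand the two moduli stacks are literally identified by the canonical isomorphism $\Phi_\kk^{-1}(P_{\kk,i})[1]\simeq R_{\kk,i}$, and the lemma follows.
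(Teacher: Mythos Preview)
Your approach rests on an object-level identification $\Phi_\kk^{-1}(P_{\kk,i})[1]\simeq R_{\kk,i}$ that is false in general, so the ``main obstacle'' you flag is not merely hard to overcome---the $H^0$-vanishing simply does not hold. Take $\kk=(k)$ of length one and $i\neq k$: the paper records $R_{(k),i}=0$, so $\Phi_{(k)}^{-1}(P_{(k),i})$ lies entirely in $\bar{\mca{F}}_{(k)}\subset\bar{\mca{A}}$ and its $H^0_{\bar{\mca{A}}}$ is the whole (nonzero) object. More generally $P_{\kk,i}$ is typically infinite-dimensional while $R_{\kk,i}\in\mca{T}_\kk\subset\mca{A}$ is finite-dimensional, so they cannot be isomorphic up to shift.

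The lemma needs no such identification; it is a statement about $\Hom$-functors, not objects. Write $P':=\Phi_\kk^{-1}(P_{\kk,i})\in\bar{\mca{A}}_\kk$ and use the canonical triangle for the $\bar{\mca{A}}$-t-structure,
\[
H^0_{\bar{\mca{A}}}(P')[1]\longrightarrow P'[1]\longrightarrow R_{\kk,i}\longrightarrow H^0_{\bar{\mca{A}}}(P')[2].
\]
For any $E\in\mca{A}\subset\bar{\mca{A}}$, applying $\Hom(-,E)$ gives an exact sequence whose outer terms are $\Hom(H^0_{\bar{\mca{A}}}(P'),E[-2])$ and $\Hom(H^0_{\bar{\mca{A}}}(P'),E[-1])$; both vanish because $H^0_{\bar{\mca{A}}}(P')$ and $E$ lie in the same heart $\bar{\mca{A}}$. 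Hence pullback along $P'[1]\to R_{\kk,i}$ is a natural isomorphism $\Hom(R_{\kk,i},E)\overset{\sim}{\to}\Hom(P'[1],E)$ for all $E\in\mca{A}$, and this identifies the two moduli stacks. This is why the paper declares the lemma ``clear'' without further proof.
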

The following equation plays a principal role in this paper:
\begin{prop}\label{prop_hom4}
We have 
\begin{equation}\label{eq_hom4}
\mf{Hom}(P_{\kk,i}[1],{\mca{T}_\kk})*\mca{M}_{\mca{T}_\kk}^{-1}=
\mr{Grass}(R_{\kk,i},\A)
\end{equation}
\textup{(motivic quiver Grassmannian identity)}. 
In particular, $\mf{Hom}(P_{\kk,i}[1],{\mca{T}_\kk})*\mca{M}_{\mca{T}_\kk}^{-1}\in \MH_0$.
\end{prop}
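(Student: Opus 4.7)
The plan is to establish the identity by a chain of substitutions in the (completed) motivic Hall algebra, using only the preparatory identities already stated in \S\ref{subsec_eq_in_MH}, namely Proposition \ref{prop_grass} (motivic quiver Grassmannian reformulation of $\mf{Hom}$), Proposition \ref{prop_hom2} (truncation of the target from $\A$ to $\mca{T}_\kk$), equation \eqref{eq_hom3} (replacement of $P_{\kk,i}[1]$ by $R_{\kk,i}$), and Proposition \ref{prop_mtpi} (motivic torsion pair identity). There is no categorical content beyond what these already provide; the task is purely one of Hall-algebra bookkeeping.

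Concretely, I would begin from the right-hand side and rewrite
\[
\mr{Grass}(R_{\kk,i},\A) \;=\; \mf{Hom}(R_{\kk,i},\A)*\mca{M}_\A^{-1}
\]
by Proposition \ref{prop_grass}, then apply \eqref{eq_hom3} to replace $\mf{Hom}(R_{\kk,i},\A)$ by $\mf{Hom}(P_{\kk,i}[1],\A)$. Proposition \ref{prop_hom2} then gives
\[
\mf{Hom}(P_{\kk,i}[1],\A) \;=\; \mf{Hom}(P_{\kk,i}[1],\mca{T}_\kk)*\mca{M}_{\mca{T}_\kk^\bot},
\]
and Proposition \ref{prop_mtpi} yields $\mca{M}_\A^{-1}=\mca{M}_{\mca{T}_\kk^\bot}^{-1}*\mca{M}_{\mca{T}_\kk}^{-1}$. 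Assembling these and invoking associativity of $*$ cancels the factor $\mca{M}_{\mca{T}_\kk^\bot}*\mca{M}_{\mca{T}_\kk^\bot}^{-1}$ in the middle, producing $\mf{Hom}(P_{\kk,i}[1],\mca{T}_\kk)*\mca{M}_{\mca{T}_\kk}^{-1}$, which is the claimed identity.

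The main obstacle, such as it is, is ensuring that the inverses $\mca{M}_\A^{-1}$, $\mca{M}_{\mca{T}_\kk^\bot}^{-1}$, $\mca{M}_{\mca{T}_\kk}^{-1}$ are meaningful: for this I would work throughout in the completed Hall algebra $\hMH(\A)$, where each of these moduli stacks has constant term $1$ on the zero class and so admits a formal geometric series inverse, exactly as used elsewhere in \S\ref{subsec_absence}. The associativity of $*$ on $\hMH(\A)$ then makes the cancellation rigorous. Finally, for the ``In particular'' assertion, the quiver Grassmannian $\mr{Grass}(R_{\kk,i},\A)$ is a scheme (see the Remark in \S\ref{subsec_grass}), so its class lies in $\MH_0$, and hence so does the equal element $\mf{Hom}(P_{\kk,i}[1],\mca{T}_\kk)*\mca{M}_{\mca{T}_\kk}^{-1}$.
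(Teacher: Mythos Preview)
Your proof is correct and is essentially the paper's own argument run in the opposite direction: the paper starts from $\mf{Hom}(P_{\kk,i}[1],\mca{T}_\kk)*\mca{M}_{\mca{T}_\kk}^{-1}$, inserts $\mca{M}_{\mca{T}_\kk^\bot}*\mca{M}_{\mca{T}_\kk^\bot}^{-1}$, and then applies \eqref{eq_hom2}, the torsion pair identity, \eqref{eq_hom3}, and \eqref{eq_grass} in that order to reach $\mr{Grass}(R_{\kk,i},\A)$. Your justification of the ``In particular'' clause via the scheme structure of the quiver Grassmannian is also exactly what the paper intends.
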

\begin{proof}
\begin{align*}
\mf{Hom}(P_{\kk,i}[1],{\mca{T}_\kk})*\mca{M}_{\mca{T}_\kk}^{-1}
\ \ =\  &\ 
\mf{Hom}(P_{\kk,i}[1],{\mca{T}_\kk})*
\mca{M}_{\mca{T}_\kk{}^\bot}*
\mca{M}_{\mca{T}_\kk^\bot}^{-1}*
\mca{M}_{\mca{T}_\kk}^{-1}\\
\overset{\eqref{eq_hom2}}{=} &\ 
\mf{Hom}(P_{\kk,i}[1],{\A})*\mca{M}_{\A}^{-1}\\
\overset{\eqref{eq_hom3}}{=} &\ 
\mf{Hom}(R_{\kk,i},{\A})*\mca{M}_{\A}^{-1}\\
\overset{\eqref{eq_grass}}{=} &\ 
\mr{Grass}(R_{\kk,i},{\A}[-1]).
\end{align*}
\end{proof}
\begin{prop}\label{prop_mfi}
We have
\[
\mca{M}_{\mathcal{T}_{\kk}}=\bigl(\mca{M}_{\mathcal{S}{(1)}}\bigr)^{\ep(1)}*\cdots *\bigl(\mca{M}_{\mathcal{S}{(l)}}\bigr)^{\ep(l)}\quad (\mr{motivic\ factorization\ identity}).
\]
\end{prop}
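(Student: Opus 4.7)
The plan is to prove the identity by induction on $r\le l$, establishing the intermediate identity
\[
\mca{M}_{\mca{T}_{\kk^{(r)}}} = \bigl(\mca{M}_{\mca{S}^{(1)}}\bigr)^{\ep(1)} * \cdots * \bigl(\mca{M}_{\mca{S}^{(r)}}\bigr)^{\ep(r)}.
\]
The base case $r=1$ is immediate: by construction $\mca{T}_{\kk^{(1)}} = \mca{S}_{k_1} = \mca{S}^{(1)}$ and $\ep(1)=+$. For the inductive step, it suffices to prove the one-step identity
\[
\mca{M}_{\mca{T}_{\kk^{(r)}}} = \mca{M}_{\mca{T}_{\kk^{(r-1)}}} * \bigl(\mca{M}_{\mca{S}^{(r)}}\bigr)^{\ep(r)},
\]
which will be deduced from an appropriate torsion pair on an extension-closed subcategory together with the motivic torsion pair identity (the obvious variant of Proposition \ref{prop_mtpi}). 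Note that $\mca{M}_{\mca{S}^{(r)}}$ has leading term $1$ in the completed Hall algebra, so it is invertible.

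\emph{Case $\ep(r)=+$.} Here $s^{(r)}\in\mca{F}_{\kk^{(r-1)}}$, and Lemma \ref{lem_23} identifies $\mca{T}_{\kk^{(r)}}$ with the full subcategory of those $X\in\mca{A}$ whose torsion-free quotient with respect to $(\mca{T}_{\kk^{(r-1)}},\mca{F}_{\kk^{(r-1)}})$ lies in $\mca{S}^{(r)}$. Equivalently, $\mca{T}_{\kk^{(r)}}$ consists of extensions $0\to T\to X\to S\to 0$ with $T\in\mca{T}_{\kk^{(r-1)}}$ and $S\in\mca{S}^{(r)}$. Since $\Hom(\mca{T}_{\kk^{(r-1)}},\mca{S}^{(r)})=0$ (as $\mca{S}^{(r)}\subset\mca{F}_{\kk^{(r-1)}}$), the pair $(\mca{T}_{\kk^{(r-1)}},\mca{S}^{(r)})$ is a torsion pair on the Abelian subcategory $\mca{T}_{\kk^{(r)}}$, yielding the factorization.

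\emph{Case $\ep(r)=-$.} Here $s^{(r)}\in\mca{T}_{\kk^{(r-1)}}$, and Lemma \ref{lem_24} identifies $\mca{T}_{\kk^{(r)}}$ with the full subcategory of $X\in\mca{T}_{\kk^{(r-1)}}$ having no $\mca{S}^{(r)}$-quotient. Given $X\in\mca{T}_{\kk^{(r-1)}}$, I apply the torsion pair $(\mca{T}_{\kk^{(r)}},\mca{F}_{\kk^{(r)}})$ of $\mca{A}$ to get $0\to Y\to X\to S\to 0$ with $Y\in\mca{T}_{\kk^{(r)}}$ and $S\in\mca{F}_{\kk^{(r)}}$. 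Since $\mca{T}_{\kk^{(r-1)}}$ is closed under quotients in $\mca{A}$, we have $S\in\mca{T}_{\kk^{(r-1)}}\cap\mca{F}_{\kk^{(r)}}$, and I claim this intersection coincides with $\mca{S}^{(r)}$. This gives a torsion pair $(\mca{T}_{\kk^{(r)}},\mca{S}^{(r)})$ on $\mca{T}_{\kk^{(r-1)}}$, whence $\mca{M}_{\mca{T}_{\kk^{(r-1)}}}=\mca{M}_{\mca{T}_{\kk^{(r)}}}*\mca{M}_{\mca{S}^{(r)}}$, and rearranging yields the required identity with exponent $-1$.

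\emph{Main obstacle.} The delicate point is the identification $\mca{T}_{\kk^{(r-1)}}\cap\mca{F}_{\kk^{(r)}}=\mca{S}^{(r)}$ in the $\ep(r)=-$ case. The strategy is to transport the problem via $\Phi_{\kk^{(r-1)}}$: under this derived equivalence, $\mca{F}_{\kk^{(r-1)}}$ becomes a piece of $\mr{mod}J_{\kk^{(r-1)}}$ and $\mca{T}_{\kk^{(r-1)}}[-1]$ the shift of the complementary piece, so that the further tilt at $s_{\kk^{(r-1)},k_r}$ is the classical one. An object of $\mca{T}_{\kk^{(r-1)}}\cap\mca{F}_{\kk^{(r)}}$ then corresponds to a $J_{\kk^{(r-1)}}$-module supported on the vertex $k_r$, i.e.\ a direct sum of copies of $s_{\kk^{(r-1)},k_r}$, which is exactly the defining description of $\mca{S}^{(r)}$.
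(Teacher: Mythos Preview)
Your argument is correct and is precisely the fleshed-out version of what the paper intends: its proof is the single line ``We can prove in the same way as the torsion pair identities,'' and your induction, applying the torsion pair identity (Proposition~\ref{prop_mtpi}) at each step via Lemmas~\ref{lem_23} and~\ref{lem_24}, is exactly that. Two minor remarks: (i) $\mca{T}_{\kk^{(r)}}$ is a torsion class, not an abelian subcategory, but the motivic torsion pair identity only needs the unique-filtration property, which you have; (ii) your ``main obstacle'' $\mca{T}_{\kk^{(r-1)}}\cap\mca{F}_{\kk^{(r)}}=\mca{S}^{(r)}$ follows directly from Lemma~\ref{lem_24}'s description of $\mca{F}''$ (an object of $\mca{T}_{\kk^{(r-1)}}$ is its own $\mca{T}_{\kk^{(r-1)}}$-torsion part, which lies in $\mca{F}_{\kk^{(r)}}$ iff it lies in $\mca{F}'[1]=\mca{S}^{(r)}$), so the transport via $\Phi_{\kk^{(r-1)}}$ is not needed.
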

\begin{proof}
We can prove in the same way as the torsion pair identities.
\end{proof}
For $\mathbf{w}\in M$, we define
\[
\mca{M}_{\mca{C}}[\mathbf{w}]:= \sum_{\mathbf{v}}\mathbb{L}^{\chi(\mathbf{w},\mathbf{v})}\cdot\mca{M}_{\mca{C}}(\mathbf{v}).
\]
We put $\mathbf{w}_{\kk,i}:=[\Gamma_{\kk,i}]$.
\begin{prop}
\[
\mf{Hom}(P_{\kk,i},{\mca{C}})
=
\mca{M}_{\mca{C}}[\mathbf{w_{\kk,i}}]\]
\end{prop}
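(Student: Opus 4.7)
The strategy is to exhibit the forgetful map
\[
\pi \colon \mf{Hom}(P_{\kk,i}, \mca{C}) \longrightarrow \mca{M}_\mca{C}, \qquad (f, E) \longmapsto E,
\]
as a Zariski-locally trivial fibration whose fibre over $E\in\mca{M}_\mca{C}(\mathbf{v})$ is an affine space of dimension $\chi(\mathbf{w}_{\kk,i},\mathbf{v})$, and then to apply the Zariski-fibration relation (3) from the definition of $K(\mr{St}/\mca{M}_\mca{C})$. Passing to the completion $\hMH(\mca{C})$ and summing over $\mathbf{v}$ then yields the claimed identity.

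First I would replace $P_{\kk,i}$ by its lift $\Gamma_{\kk,i}\in\pera$, using that $H^0_{\A_\kk}(\Gamma_{\kk,i})=P_{\kk,i}$ and that the two objects induce the same $\Hom$-functor on the subcategories $\mca{C}$ at issue. The key technical input is then the single-degree concentration
\[
\Hom(\Gamma_{\kk,i}, E[j])\ =\ 0 \quad (j\neq 0), \qquad E\in\mca{C},
\]
which combined with the Euler pairing extended to $M_Q\times L_Q$ as in \S \ref{subsec_Gro} (and compatible with the identification $\phi_\kk\colon L_Q\simeq L_{Q_\kk}$ of \S \ref{subsub_derived_equiv}) upgrades to the numerical equality $\dim_\C\Hom(P_{\kk,i},E)=\chi(\mathbf{w}_{\kk,i},[E])$. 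This in turn follows from the projectivity of $\Gamma_{\kk,i}$ inside the heart $\A_\kk=\Phi_\kk^{-1}(\mr{mod}\,J_\kk)$, combined with the description of $\mca{C}$ relative to $\A_\kk$ obtained in \S \ref{subsec_comp}: the categories $\A_\kk$, $\mca{T}_\kk[-1]$, and $\mca{F}_\kk=\mca{T}_\kk^\bot$ sit literally in $\A_\kk$, whereas $\mca{T}_\kk=(\mca{T}_\kk[-1])[1]$, $\mca{S}_{(r)}$, and general $E\in\A$ are reached by applying the torsion pair $(\mca{T}_\kk,\mca{F}_\kk)$ and chasing the resulting long exact sequence.

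Second I would invoke representability of $E\mapsto\Hom(P_{\kk,i},E)$: on any smooth atlas of $\mca{M}_\mca{C}(\mathbf{v})$ it is represented by a vector bundle of constant rank $\chi(\mathbf{w}_{\kk,i},\mathbf{v})$, so the restriction of $\pi$ to $\mf{Hom}(P_{\kk,i},\mca{C})(\mathbf{v})$ is a Zariski fibration with fibre $\C^{\chi(\mathbf{w}_{\kk,i},\mathbf{v})}$. Relation (3) then gives
\[
\bigl[\mf{Hom}(P_{\kk,i},\mca{C})(\mathbf{v})\bigr]\ =\ \mathbb{L}^{\chi(\mathbf{w}_{\kk,i},\mathbf{v})}\cdot\bigl[\mca{M}_\mca{C}(\mathbf{v})\bigr]
\]
in $K(\mr{St}/\mca{M}_\mca{C}(\mathbf{v}))$, and summing over $\mathbf{v}$ in $\hMH(\mca{C})$ recovers the proposition.

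The main obstacle is the uniform Ext-vanishing across the whole list $\mca{C}\in\{\A,\A_\kk,\mca{T}_\kk,\mca{T}_\kk[-1],\mca{T}_\kk^\bot,\mca{S}_{(r)}\}$. For the $\mca{C}$'s literally inside $\A_\kk$ the vanishing is immediate from projectivity of $\Gamma_{\kk,i}$; the remaining cases rely on the sign-coherence statement (Theorem \ref{thm_tilting} and its corollary) which controls in which shifted copy of $\A_\kk$ each object of $\mca{C}$ can live, so that only one degree of $\Hom^\bullet(\Gamma_{\kk,i},E)$ contributes. Once this case analysis is dispatched, the passage from fibre dimensions to the Hall-algebra identity is formal.
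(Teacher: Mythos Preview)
Your overall strategy---exhibit $\pi$ as a Zariski-locally trivial affine bundle of rank $\chi(\mathbf{w}_{\kk,i},\mathbf{v})$ and invoke relation~(3)---is the paper's as well. The paper's execution, however, is shorter and more concrete: it presents $\mca{M}_\mca{C}(\mathbf{v})$ as the quotient stack $[\X/GL(\mathbf{v})]$, observes that $\mf{Hom}(\Gamma_i,\mca{C})(\mathbf{v})$ is the $i$-th \emph{tautological} bundle of rank $v_i$ (so its pullback to the atlas $\X$ is literally trivial), and then invokes specialness of $GL(\mathbf{v})$ to conclude Zariski local triviality on the quotient. No Ext case-analysis enters: for the basic projective one has $\Hom(\Gamma_i,E)=E_i$ on the nose for every module $E$ in every one of the listed $\mca{C}$'s.

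Your route through Ext-vanishing for $\Gamma_{\kk,i}$ runs into an actual problem. The displayed concentration ``$\Hom(\Gamma_{\kk,i},E[j])=0$ for $j\neq 0$'' fails for $\mca{C}\in\{\A,\mca{T}_\kk,\mca{S}_{(r)}\}$: objects of $\mca{T}_\kk$ and $\mca{S}_{(r)}$ lie in $\A_\kk[1]$, so the Hom sits in degree $-1$; a general $E\in\A$ has nontrivial contributions in both degrees $0$ and $-1$. You half-acknowledge this (``only one degree contributes''), but that weaker statement no longer yields $\dim\Hom(P_{\kk,i},E)=\chi(\mathbf{w}_{\kk,i},[E])$---for $E\in\mca{T}_\kk$ the left side is $0$ while the right side is negative. (Downstream, Corollary~\ref{cor_hom_factor} only ever applies the result with the shift $P_{\kk,i}[1]$, which does restore degree-$0$ concentration; but your argument as written does not cover the Proposition for the full list of $\mca{C}$'s.) Separately, the step ``vector bundle on a smooth atlas $\Rightarrow$ Zariski fibration on the stack'' is exactly where the paper inserts the specialness of $GL(\mathbf{v})$; you assert it without justification.
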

\begin{proof}

We can realize $\mca{M}_{\mca{C}}(\vv)$ as a quotient stack $[\X/GL(\vv)]$, where $GL(\vv)$ is a direct product of $\mr{GL}(v_i)$'s.
Note that $\mf{Hom}(\Gamma_i,{\mca{C}})(\vv)$ is a vector bundle of rank $v_i$ on $\mca{M}_{\mca{C}}(\vv)$, whose pull-back on $\X$ is trivial. 
Since $GL(\vv)$ is special, $\mf{Hom}(\Gamma_i,{\mca{C}})(\vv)$ is Zariski locally trivial.
\end{proof}
\begin{cor}\label{cor_hom_factor}
\[
\mf{Hom}(P_{\kk,i}[1],{\mca{T}_{\kk}})=\bigl(\mf{Hom}(P_{\kk,i}[1],{\mca{S}(1)})\bigr)^{\ep(1)}*\cdots *\bigl(\mf{Hom}(P_{\kk,i}[1],{\mca{S}(l)})\bigr)^{\ep(l)}.
\]
\end{cor}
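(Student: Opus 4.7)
The plan is to deduce this corollary from two ingredients already available: the motivic factorization identity of Proposition~\ref{prop_mfi}, together with the preceding Proposition (expressing $\mf{Hom}$-stacks as twisted moduli stacks $\mca{M}_\mca{C}[\mathbf{w}_{\kk,i}]$). The bridge between them is a simple compatibility: the twist $[\mathbf{w}]$ is multiplicative with respect to the motivic Hall product.

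First I would establish the shifted analogue of the preceding Proposition. For $\mca{C}$ equal to $\mca{T}_\kk$ or to any $\mca{S}(r)$, I claim
\[
\mf{Hom}(P_{\kk,i}[1],\mca{C})=\mca{M}_\mca{C}[-\mathbf{w}_{\kk,i}].
\]
The argument is essentially the one given for the proposition above: $\mf{Hom}(P_{\kk,i}[1],\mca{C})(\vv)$ is a vector bundle over $\mca{M}_\mca{C}(\vv)$ whose rank equals $\dim\Hom(P_{\kk,i}[1],E)$ for any $E$ with $[E]=\vv$. Since $E\in\mca{T}_\kk$ forces $E[-1]\in\mca{A}_\kk$ by Theorem~\ref{thm_tilting}, and $P_{\kk,i}$ is projective in $\mca{A}_\kk$, this rank is additive in $\vv$ and coincides with $\chi(-\mathbf{w}_{\kk,i},\vv)$. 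Realizing $\mca{M}_\mca{C}(\vv)$ as a quotient $[\X/GL(\vv)]$, the pull-back bundle on $\X$ is trivial, and Zariski-local triviality then follows from the specialness of $GL(\vv)$, exactly as in the proof above.

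Next I would transport the twist through the factorization identity. The key formal fact is that for any $\mathbf{w}\in M$ and any two Hall algebra elements $\mca{A},\mca{B}$,
\[
(\mca{A}*\mca{B})[\mathbf{w}]=\mca{A}[\mathbf{w}]*\mca{B}[\mathbf{w}],
\]
which is immediate from bilinearity of the Euler form combined with the fact that the Hall product sums $L$-degrees: on each graded piece the powers $\mathbb{L}^{\chi(\mathbf{w},\vv_1)}$ and $\mathbb{L}^{\chi(\mathbf{w},\vv_2)}$ combine to $\mathbb{L}^{\chi(\mathbf{w},\vv_1+\vv_2)}$. The same linearity extends the compatibility to the formal $*$-inverses $\mca{M}_{\mca{S}(r)}^{-1}$ that appear when $\ep(r)=-$. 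Applying the twist $[-\mathbf{w}_{\kk,i}]$ to both sides of the identity of Proposition~\ref{prop_mfi} and invoking the shifted formula from the previous step on each factor yields
\[
\mf{Hom}(P_{\kk,i}[1],\mca{T}_\kk)=\mca{M}_{\mca{T}_\kk}[-\mathbf{w}_{\kk,i}]=\bigl(\mca{M}_{\mca{S}(1)}[-\mathbf{w}_{\kk,i}]\bigr)^{\ep(1)}*\cdots*\bigl(\mca{M}_{\mca{S}(l)}[-\mathbf{w}_{\kk,i}]\bigr)^{\ep(l)},
\]
which is the desired identity.

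The only delicate point is the first step, namely verifying that $\dim\Hom(P_{\kk,i}[1],E)$ is locally constant on each connected component of $\mca{M}_\mca{C}$; this uses the projectivity of $P_{\kk,i}$ in the tilted heart $\mca{A}_\kk$ together with the fact that $\mca{T}_\kk\subset\mca{A}_\kk[1]$. Once that observation is in place, the rest is a purely formal manipulation inside the motivic Hall algebra.
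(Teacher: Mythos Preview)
Your proposal is correct and follows exactly the route the paper intends: the corollary is presented immediately after the proposition $\mf{Hom}(P_{\kk,i},\mca{C})=\mca{M}_\mca{C}[\mathbf{w}_{\kk,i}]$ precisely because one obtains it by observing that the twist $[\mathbf{w}]$ is multiplicative with respect to the Hall product and then applying it to the motivic factorization identity of Proposition~\ref{prop_mfi}. The paper gives no explicit proof, but your argument is the evident one being invoked.
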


\subsection{Idea}\label{subsec_idea}
The purpose of this subsection is to show the idea of the proof. 
In this subsection we {\it assume} Conjecture \ref{conj_ks} is true, since it would make the argument clearer.
The actual proof starts from the next subsection, which is independent from Conjecture \ref{conj_ks}.

We define the torus automorphism
\[
\begin{array}{ccccc}
\widehat{\mr{q\text{-}Ad}}_{\A}:=\mathrm{Ad}_{I_{\mr{KS}}(\mca{M}_{\mca{A}})} & \colon & \hDQT(\mca{A}) & \overset{\sim}{\longrightarrow} & \hDQT(\mca{A})\\
&& \bullet & \longmapsto & I_{\mr{KS}}(\mca{M}_\mca{A})\times \bullet \times I_{\mr{KS}}(\mca{M}_\mca{A})^{-1}.
\end{array}
\]

\begin{PROP}
\begin{itemize}
\item[\textup{(1)}]We have
\[
\widehat{\mr{q\text{-}Ad}}_{\mca{A}}(x_i)=
x_i\cdot I_{\mr{KS}}\Bigl(\Hom(P_i,\mca{A})*\mca{M}_{\mca{A}}^{-1}\Bigr).
\]
\item[\textup{(2)}]
We have
\[
\widehat{\mr{q\text{-}Ad}}_{\A}(x_i)=
x_i\cdot I_{\mr{KS}}(\mr{Hilb}_J(i)).
\]
In particular, the non-commutative Donaldson-Thomas invariants for $J$ are encoded in the torus automorphism $\widehat{\mr{q\text{-}Ad}}_{\A}$
\end{itemize}
\end{PROP}
\begin{proof}
Note that we have 
\begin{equation}\label{eq_commutator}
\mca{E}\cdot x_i=x_i\cdot \mca{E}|_{y_j=y_j\cdot t^{2\delta_{ij}}}
\end{equation}
for $\mca{E}\in\hQT(\A)$, where $\mca{E}|_{y_j=y_j\cdot t^{2\delta_{ij}}}$ is given by substituting $y_j\cdot t^{2\delta_{ij}}$ for $y_j$.
We call this as {\it the commutator identity}.
The first claim is a consequence of the commutator identity.
The second one follows from the ``motivic Hilbert scheme identity'' (Proposition \ref{prop_mhsi}).
\end{proof}
We define the torus automorphism
\[
\widehat{\mr{q\text{-}Ad}}_{\mca{C}}:=\mathrm{Ad}_{I_{\mr{KS}}(\mca{M}_{\mca{C}})}\colon\hDQT(\mca{C})\overset{\sim}{\longrightarrow}\hDQT(\mca{C})
\]
in the same way. 
\begin{PROP}
We have the factorization identities
\begin{align}
&\widehat{\mr{q\text{-}Ad}}_{\A}=
\widehat{\mr{q\text{-}Ad}}_{\mca{T}_\kk}\circ
\widehat{\mr{q\text{-}Ad}}_{\mca{T}_\kk^\bot},\label{eq_q_factor1}\\
&\widehat{\mr{q\text{-}Ad}}_{\A_\kk}=
\widehat{\mr{q\text{-}Ad}}_{\mca{T}_\kk{}^\bot}\circ
\widehat{\mr{q\text{-}Ad}}_{\mca{T}_\kk[-1]},\label{eq_q_factor2}.
\end{align}
In particular, $\widehat{\mr{q\text{-}Ad}}_{\mca{T}_\kk}$ and $\widehat{\mr{q\text{-}Ad}}_{\mca{T}_\kk[-1]}$ provide a transformation formula of non-commutative Donaldson-Thomas invariants between $J$ and $J_\kk$.
\end{PROP}
\begin{proof}
They are consequences of the ``motivic torsion pair identity'' (Proposition \ref{prop_mtpi}).
\end{proof}
\begin{PROP}
\begin{itemize}
\item[\textup{(1)}]
We have
\[
\widehat{\mr{q\text{-}Ad}}_{\mca{T}_\kk[-1]}(x_{\kk,i})=
x_{\kk,i}\cdot I_{\mr{KS}}\Bigl(\Hom(P_{\kk,i},\mca{T}_\kk[-1])*\mca{M}_{\mca{T}_\kk[-1]}^{-1}\Bigr).
\]
\item[\textup{(2)}]
The torus automorphism can be described in terms of quiver Grassmannians:
\[
\widehat{\mr{q\text{-}Ad}}_{\mca{T}_\kk[-1]}(x_{\kk,i})=
x_{\kk,i}\cdot I_{\mr{KS}}\bigl(\mr{Grass}(R_{\kk,i}[-1],\A[-1])\bigr).
\]
In particular, the transformation formula of the non-commutative Donaldson-Thomas invariants can be described in terms of quiver Grassmannians.
\end{itemize}
\end{PROP}
\begin{proof}
The first claim follows by the commutator identity. 
The second one follows by the ``motivic quiver Grassmannian identity'' (Proposition \eqref{eq_hom4}).
\end{proof}
\begin{PROP}
We have the factorization identity
\[
\widehat{\mr{q\text{-}Ad}}_{\mca{T}_\kk[-1]}=
\Bigl(\widehat{\mr{q\text{-}Ad}}_{\mca{S}(1)[-1]}\Bigr)^{\ep(1)}\circ\cdots\circ
\Bigr(\widehat{\mr{q\text{-}Ad}}_{\mca{S}(l)[-1]}\Bigr)^{\ep(l)}.
\]
\end{PROP}
\begin{proof}
This is a consequences of the ``motivic factorization identity'' (Proposition \ref{prop_mfi}).
\end{proof}

\subsection{Proof}\label{subsec_auto}
\subsubsection{Definition of the automorphism}\label{subsec_531}
Let $\mca{C}$ be one of the categories $\A$, $\A_\kk$, $\mca{T}_\kk$, $\mca{T}_\kk[-1]$, $\mca{T}_\kk^\bot$ and $\mca{S}(r)$.
Recall that we have $\hep_\mca{C}\in \widehat{\mr{MH}}_0(\mca{C})$.
\begin{defn}
\[
\widehat{\mr{Ad}}_{\mca{C},\sigma}
:=
\exp\Bigl(\mathrm{ad}_{I_\sigma\bigl(\hep_\mca{C}\bigr)}\Bigr)\colon \hDQT_{\mr{sc},\sigma}(\mca{C})\overset{\sim}{\too}\hDQT_{\mr{sc},\sigma}(\mca{C})
\]
\end{defn}
We will prove Theorem \ref{thm_520}, \ref{thm_521}, \ref{thm_factor1} and \ref{thm_factor2} which induce all the results in \S \ref{sec_statement}.

\subsubsection{Infinitesimal commutator identity}
For $\ep=\sum\ep(\mathbf{v})\in \hMH(\A)$ and $\hep=\sum\hep(\mathbf{v})\in \hMH_{\mr{sc}}(\A)$ we define $\ep[\mathbf{w}_{i}]\in \hMH(\A)$ and $\hep\{\mathbf{w}_i\}\in \hMH_{\mr{sc}}(\A)$ by
\[
\ep[\mathbf{w}_i]:=\sum \mathbb{L}^{\chi([\mathbf{w}_i],\mathbf{v})}\cdot \ep(\mathbf{v}),\quad
\hep\{\mathbf{w}_i\}:=\sum {\chi(\mathbf{w}_i,\mathbf{v})}\cdot \hep(\mathbf{v})
\]
respectively.
Then we have the following:
\begin{lem}
We have $\ep_{\A}[\mathbf{w}_i]-\ep_{\A}\in \hMH_{0}({\A})$ and 
\[
(\ep_{\A}[\mathbf{w}_i]-\ep_{\A})|_{\mathbb{L}=1}=\hep_{\A}\{\mathbf{w}_i\}.
\]
(See \S \ref{subsec_absence} for the definitions.)
\end{lem}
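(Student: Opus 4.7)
The plan is a direct graded computation once one unpacks the definitions. By the grading decomposition $\ep_\A=\sum_{\mathbf{v}}\ep_\A(\mathbf{v})$ and the definition of the bracket $[\mathbf{w}_i]$ we have
\[
\ep_\A[\mathbf{w}_i]-\ep_\A
=
\sum_{\mathbf{v}}\bigl(\mathbb{L}^{\chi(\mathbf{w}_i,\mathbf{v})}-1\bigr)\cdot\ep_\A(\mathbf{v}).
\]
The key elementary observation is that for any integer $n\in \Z$ the element $\mathbb{L}^n-1$ is divisible by $\mathbb{L}-1$ inside $K(\mr{Var}/\C)[\mathbb{L}^{-1}]$: writing $[n]_\mathbb{L}:=(\mathbb{L}^n-1)/(\mathbb{L}-1)$ one has $[n]_\mathbb{L}=1+\mathbb{L}+\cdots+\mathbb{L}^{n-1}$ for $n\geq 1$, $[0]_\mathbb{L}=0$, and $[n]_\mathbb{L}=-\mathbb{L}^{n}(1+\mathbb{L}+\cdots+\mathbb{L}^{-n-1})$ for $n<0$. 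In particular $[n]_\mathbb{L}|_{\mathbb{L}=1}=n$.

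Using this, I would rewrite
\[
\ep_\A[\mathbf{w}_i]-\ep_\A
=
\sum_{\mathbf{v}}[\chi(\mathbf{w}_i,\mathbf{v})]_{\mathbb{L}}\cdot (\mathbb{L}-1)\,\ep_\A(\mathbf{v})
=
\sum_{\mathbf{v}}[\chi(\mathbf{w}_i,\mathbf{v})]_{\mathbb{L}}\cdot \tep_\A(\mathbf{v}).
\]
By Theorem~\ref{thm_pole} each $\tep_\A(\mathbf{v})$ lies in $\hMH_0(\A)$, and $[\chi(\mathbf{w}_i,\mathbf{v})]_\mathbb{L}\in K(\mr{Var}/\C)[\mathbb{L}^{-1}]$, so the infinite sum belongs to the completion $\hMH_0(\A)$. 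This proves the first assertion.

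For the second assertion I would reduce modulo $(\mathbb{L}-1)$ termwise: since $[\chi(\mathbf{w}_i,\mathbf{v})]_\mathbb{L}\mapsto \chi(\mathbf{w}_i,\mathbf{v})$ and $\tep_\A(\mathbf{v})\mapsto \hep_\A(\mathbf{v})$ under the projection $\hMH_0(\A)\twoheadrightarrow \hMH_{\mr{sc}}(\A)$, we obtain
\[
\bigl(\ep_\A[\mathbf{w}_i]-\ep_\A\bigr)\big|_{\mathbb{L}=1}
=\sum_{\mathbf{v}}\chi(\mathbf{w}_i,\mathbf{v})\cdot \hep_\A(\mathbf{v})
=\hep_\A\{\mathbf{w}_i\}.
\]
There is no genuine obstacle here; the only point requiring slight care is that the components $\ep_\A(\mathbf{v})$ need not individually lie in $\hMH_0(\A)$, so one must perform the factorization $\mathbb{L}^n-1=(\mathbb{L}-1)[n]_\mathbb{L}$ \emph{before} splitting the series, in order to recombine each summand with its $(\mathbb{L}-1)$ factor into $\tep_\A(\mathbf{v})$ and then invoke Theorem~\ref{thm_pole}.
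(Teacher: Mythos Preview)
Your argument is correct and is exactly the direct graded computation the paper has in mind; in fact the paper states this lemma without proof, treating it as an immediate consequence of the definitions together with Theorem~\ref{thm_pole}. Your only extra content is the explicit factorisation $\mathbb{L}^n-1=(\mathbb{L}-1)[n]_{\mathbb{L}}$ and the care taken to recombine $(\mathbb{L}-1)\ep_\A(\mathbf{v})=\tep_\A(\mathbf{v})$ before invoking Theorem~\ref{thm_pole}, which is precisely the point that makes the statement nontrivial.
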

We put
\[
\mca{E}_{i,\A}^{\langle p\rangle}:=\sum_j\frac{(-1)^jp!}{j!(p-j)!}\ep_{\A}[\mathbf{w}_i]^{*(p-j)}*\ep_{\A}^{*(j)}\in \hMH(\A).
\]
\begin{lem}
$\mca{E}_{i,\A}^{\langle p\rangle}\in \widehat{\mr{MH}}_{0}(\A)$.
\end{lem}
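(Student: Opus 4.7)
My plan is to proceed by induction on $p$, using a Pascal-type recursion for the elements $\mca{E}_{i,\A}^{\langle p\rangle}$. First I would apply the identity $\binom{p+1}{j}=\binom{p}{j-1}+\binom{p}{j}$ to the defining alternating sum; collecting terms yields the one-step recursion
\[
\mca{E}_{i,\A}^{\langle p+1\rangle}=\ep_\A[\mb{w}_i]*\mca{E}_{i,\A}^{\langle p\rangle}-\mca{E}_{i,\A}^{\langle p\rangle}*\ep_\A,
\]
and substituting $\ep_\A[\mb{w}_i]=\ep_\A+\mca{E}_{i,\A}^{\langle 1\rangle}$ lets me rewrite this as
\[
\mca{E}_{i,\A}^{\langle p+1\rangle}=\mca{E}_{i,\A}^{\langle 1\rangle}*\mca{E}_{i,\A}^{\langle p\rangle}+[\ep_\A,\mca{E}_{i,\A}^{\langle p\rangle}].
\]
The base cases $\mca{E}_{i,\A}^{\langle 0\rangle}=1$ and $\mca{E}_{i,\A}^{\langle 1\rangle}\in\hMH_0(\A)$ come for free, the latter being exactly the preceding lemma.

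For the inductive step, assume $\mca{E}_{i,\A}^{\langle p\rangle}\in\hMH_0(\A)$. The first summand in the recursion is then a $*$-product of two elements of $\hMH_0(\A)$, hence lies in $\hMH_0(\A)$ since that space is a subring (the completed version of the quoted Theorem~5.2 of \cite{bridgeland-hall}). To handle the commutator term I would invoke Theorem~\ref{thm_pole}: since $\tep_\A=(\mathbb{L}-1)\ep_\A$ lies in $\hMH_0(\A)$ and $\mathbb{L}-1$ is central in $\hMH(\A)$, one has
\[
[\tep_\A,\mca{E}_{i,\A}^{\langle p\rangle}]=(\mathbb{L}-1)\cdot[\ep_\A,\mca{E}_{i,\A}^{\langle p\rangle}].
\]
The left-hand side is a commutator of two elements of $\hMH_0(\A)$; because the semiclassical quotient $\hMH_0(\A)/(\mathbb{L}-1)\hMH_0(\A)$ is commutative, it lies in $(\mathbb{L}-1)\hMH_0(\A)$, so one may write it as $(\mathbb{L}-1)Y$ with $Y\in\hMH_0(\A)$. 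Invertibility of $\mathbb{L}-1$ in the ambient $\hMH(\A)$ (via $[\mr{B}\mathbb{G}_m]=(\mathbb{L}-1)^{-1}$) then forces $Y=[\ep_\A,\mca{E}_{i,\A}^{\langle p\rangle}]$, so this commutator lies in $\hMH_0(\A)$. The recursion now delivers $\mca{E}_{i,\A}^{\langle p+1\rangle}\in\hMH_0(\A)$, closing the induction.

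The principal technical hurdle will be the commutator step: one must make rigorous the divisibility of $[\tep_\A,\mca{E}_{i,\A}^{\langle p\rangle}]$ by $\mathbb{L}-1$ with quotient back in $\hMH_0(\A)$, which demands the completed analogue of the Bridgeland commutativity statement for $\hMH_{\mr{sc}}(\A)$ together with an unambiguous way of cancelling $\mathbb{L}-1$ inside $\hMH(\A)$. Once those foundational inputs are isolated, everything else reduces to the purely formal Pascal identity, and the induction proceeds as sketched.
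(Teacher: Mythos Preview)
Your proof is correct and follows exactly the paper's approach: the paper's one-line proof displays the same recursion
\[
\mca{E}_{i,\A}^{\langle p+1\rangle}=
(\ep_\A[\mathbf{w}_i]-\ep_\A)* \mca{E}_{i,\A}^{\langle p\rangle} + \frac{1}{\mathbb{L}-1}\bigl[\tep_\A,\mca{E}_{i,\A}^{\langle p\rangle}\bigr]
\]
(identical to yours after noting $\ep_\A[\mathbf{w}_i]-\ep_\A=\mca{E}_{i,\A}^{\langle 1\rangle}$ and $\tep_\A=(\mathbb{L}-1)\ep_\A$) and then simply says ``the claim follows by induction.'' Your write-up just makes explicit the Pascal identity and the divisibility/cancellation of $\mathbb{L}-1$ that the paper leaves implicit.
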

\begin{proof}
Since we have
\[
\mca{E}_{i,\A}^{\langle p+1\rangle}:=
(\ep_\A[\mathbf{w}_i]-\ep_\A)\cdot \mca{E}_{i,\A}^{\langle p\rangle} + \frac{1}{\mathbb{L}-1}\Bigl[\tep_\A,\mca{E}_{i,\A}^{\langle p\rangle}\Bigr],
\]
the claim follows by induction.
\end{proof}
\begin{cor}
\begin{equation}\label{eq_517}
\mca{E}_{i,\A}^{\langle p+1\rangle}\big|_{\mathbb{L}=1}=
\hep_\A\{\mathbf{w}_i\}\cdot \mca{E}_{i,\A}^{\langle p\rangle}\big|_{\mathbb{L}=1} + \Bigl\{\hep_\A,\mca{E}_{i,\A}^{\langle p\rangle}\big|_{\mathbb{L}=1}\Bigr\}.
\end{equation}
\end{cor}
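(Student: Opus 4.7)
The plan is to take the recursive identity
\[
\mca{E}_{i,\A}^{\langle p+1\rangle}
= (\ep_\A[\mathbf{w}_i]-\ep_\A)*\mca{E}_{i,\A}^{\langle p\rangle}
+ \tfrac{1}{\mathbb{L}-1}\bigl[\tep_\A,\mca{E}_{i,\A}^{\langle p\rangle}\bigr],
\]
which was already written down in the proof of the preceding Lemma, and specialize it at $\mathbb{L}=1$. So first I would justify that each piece has a well-defined specialization: by the previous Lemma $\mca{E}_{i,\A}^{\langle p\rangle}$ and $\mca{E}_{i,\A}^{\langle p+1\rangle}$ lie in $\widehat{\mr{MH}}_0(\A)$, and by the lemma just above it so does $\ep_\A[\mathbf{w}_i]-\ep_\A$; rearranging the recursion then forces the remaining quotient $\tfrac{1}{\mathbb{L}-1}[\tep_\A,\mca{E}_{i,\A}^{\langle p\rangle}]$ to lie in $\widehat{\mr{MH}}_0(\A)$ as well, so the specialization of every summand is meaningful.

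Next I would handle the two summands separately. For the first, the earlier Lemma gives $(\ep_\A[\mathbf{w}_i]-\ep_\A)|_{\mathbb{L}=1}=\hep_\A\{\mathbf{w}_i\}$, and since the product induced on $\widehat{\mr{MH}}_{\mr{sc}}(\A)$ is commutative the specialization of the first summand is exactly $\hep_\A\{\mathbf{w}_i\}\cdot \mca{E}_{i,\A}^{\langle p\rangle}|_{\mathbb{L}=1}$. For the second, one recalls that the Poisson bracket on the semiclassical limit is by definition $\{f,g\}=(f*g-g*f)/(\mathbb{L}-1)\bmod(\mathbb{L}-1)$, computed after choosing lifts to $\widehat{\mr{MH}}_0(\A)$. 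Taking the natural lifts $\tep_\A$ of $\hep_\A$ and $\mca{E}_{i,\A}^{\langle p\rangle}$ of its own specialization, this definition reads
\[
\bigl\{\hep_\A,\,\mca{E}_{i,\A}^{\langle p\rangle}\big|_{\mathbb{L}=1}\bigr\}
= \tfrac{1}{\mathbb{L}-1}\bigl[\tep_\A,\mca{E}_{i,\A}^{\langle p\rangle}\bigr]\Big|_{\mathbb{L}=1},
\]
matching the second summand.

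I do not anticipate a real obstacle here: the corollary is essentially a book-keeping step that passes the recursive identity through the quotient $\widehat{\mr{MH}}_0(\A)\to\widehat{\mr{MH}}_{\mr{sc}}(\A)$. Every nontrivial input—the absence of poles needed to define $\tep_\A$ and the identifications of the leading and Poisson parts—has already been packaged into Theorem \ref{thm_pole}, the previous Lemma, and the definition of $\{-,-\}$. The only care required is to present the argument in the order above so that one does not invoke the specialization of $\tfrac{1}{\mathbb{L}-1}[\tep_\A,\mca{E}_{i,\A}^{\langle p\rangle}]$ before establishing that it exists.
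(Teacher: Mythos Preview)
Your proposal is correct and is exactly the approach the paper intends: the Corollary is stated without proof, as an immediate specialization at $\mathbb{L}=1$ of the recursion displayed in the proof of the preceding Lemma, using the identification $(\ep_\A[\mathbf{w}_i]-\ep_\A)|_{\mathbb{L}=1}=\hep_\A\{\mathbf{w}_i\}$ and the definition of the Poisson bracket. Your write-up in fact supplies more justification than the paper does.
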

\begin{prop}\label{prop_commutator}
\begin{align*}
\widehat{\mr{Ad}}_{\A,\sigma}(x_{i,\sigma})&=x_{i,\sigma}\cdot I_\sigma\Bigl(\bigl(\mca{M}_{\A}[\mathbf{w}_i]*\mca{M}_{\A}^{-1}\bigr)\big|_{\mathbb{L}=1}\Bigr),\\
\widehat{\mr{Ad}}_{\A,\sigma}(y_{i,\sigma})&=y_{i,\sigma}\cdot \prod_{j}I_\sigma\Bigl(\bigl(\mca{M}_{\A}[\mathbf{w}_j]*\mca{M}_{\A}^{-1}\bigr)\big|_{\mathbb{L}=1}\Bigr)^{\bar{Q}(j,i)}.
\end{align*}
\end{prop}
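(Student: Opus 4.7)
The strategy is to expand the defining exponential
\[
\widehat{\mr{Ad}}_{\A,\sigma}(x_{i,\sigma})=\sum_{p\geq 0}\frac{1}{p!}\,\mathrm{ad}^{p}_{I_\sigma(\hep_\A)}(x_{i,\sigma}),
\]
to identify each iterated adjoint by induction on $p$ with $x_{i,\sigma}\cdot I_\sigma(\mca{E}_{i,\A}^{\langle p\rangle}|_{\mathbb{L}=1})$ using the combinatorial data built up just before the statement, and then resum.

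The base case $p=1$ is an \emph{infinitesimal commutator identity}
\[
\{I_\sigma(\hep_\A),\,x_{i,\sigma}\}=x_{i,\sigma}\cdot I_\sigma(\hep_\A\{\mathbf{w}_i\}),
\]
obtained by applying the quantum commutator identity \eqref{eq_commutator} to $\mca{E}=\tep_\A$, dividing by $\mathbb{L}-1$, and reducing modulo $\mathbb{L}-1$; the quotient is well-defined because $\tep_\A\in\hMH_0(\A)$ (Theorem \ref{thm_pole}) and because of the lemma $(\ep_\A[\mathbf{w}_i]-\ep_\A)|_{\mathbb{L}=1}=\hep_\A\{\mathbf{w}_i\}$. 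For the inductive step I apply the Leibniz rule for the Poisson bracket to $\mathrm{ad}_{I_\sigma(\hep_\A)}\bigl(x_{i,\sigma}\cdot I_\sigma(\mca{E}_{i,\A}^{\langle p\rangle}|_{\mathbb{L}=1})\bigr)$. The first summand equals $x_{i,\sigma}\cdot I_\sigma(\hep_\A\{\mathbf{w}_i\}\cdot\mca{E}_{i,\A}^{\langle p\rangle}|_{\mathbb{L}=1})$ by the base case, while the second, using that $I_\sigma$ is a Poisson homomorphism (Theorem \ref{thm_integral}), equals $x_{i,\sigma}\cdot I_\sigma(\{\hep_\A,\mca{E}_{i,\A}^{\langle p\rangle}|_{\mathbb{L}=1}\})$; by the recursion \eqref{eq_517} their sum is $x_{i,\sigma}\cdot I_\sigma(\mca{E}_{i,\A}^{\langle p+1\rangle}|_{\mathbb{L}=1})$.

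Once the inductive identification is established, the resummation is formal: the change of index $m=p-j$ in the binomial-type formula for $\mca{E}_{i,\A}^{\langle p\rangle}$ gives
\[
\sum_{p\geq 0}\frac{1}{p!}\mca{E}_{i,\A}^{\langle p\rangle}=\exp(\ep_\A[\mathbf{w}_i])*\exp(-\ep_\A)=\mca{M}_\A[\mathbf{w}_i]*\mca{M}_\A^{-1},
\]
where the last equality uses that the operation $(-)[\mathbf{w}_i]$ commutes with the Hall product $*$ and that $\mca{M}_\A=\exp(\ep_\A)$. Specialising at $\mathbb{L}=1$ and applying $I_\sigma$ gives the formula for $x_{i,\sigma}$. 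The $y$-case is parallel: the analogous commutator relation for $y_{i,\sigma}$ in the quantum torus uses the antisymmetric pairing $\chi$ on $L$ coming from the $3$-Calabi-Yau property, so that the role of the weight $\mathbf{w}_i$ is played by $[s_i]=\sum_j\bar Q(j,i)\mathbf{w}_j\in M$; running the same four steps produces the claimed product of $\bar Q(j,i)$-th powers.

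The main obstacle I anticipate is the careful bookkeeping around the semiclassical limit: $\mca{M}_\A$, $\ep_\A$, and $\mca{M}_\A[\mathbf{w}_i]$ individually have poles at $\mathbb{L}=1$, so every step of the induction must be organised so that the relevant differences actually land in $\hMH_0(\A)$ before dividing by $\mathbb{L}-1$. The regularity statement $\mca{E}_{i,\A}^{\langle p\rangle}\in\hMH_0(\A)$, proved inductively just before the proposition, together with Theorem \ref{thm_pole}, is what legitimises the passage, and tracing those regularity conditions through each step is the most delicate part.
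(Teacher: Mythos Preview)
Your proposal is correct and follows essentially the same route as the paper: define $E_{i,\A}^{\langle p\rangle}$ by $(\mathrm{ad}_{I_\sigma(\hep_\A)})^p(x_{i,\sigma})=x_{i,\sigma}\cdot E_{i,\A}^{\langle p\rangle}$, prove $E_{i,\A}^{\langle p\rangle}=I_\sigma(\mca{E}_{i,\A}^{\langle p\rangle}|_{\mathbb{L}=1})$ by induction via Leibniz and \eqref{eq_517}, then resum using \eqref{eq_exp}; the $y$-formula follows from multiplicativity, which the paper records as the cocycle identity $\mca{M}_\A[\mathbf{w}+\mathbf{w}']*\mca{M}_\A^{-1}=(\mca{M}_\A[\mathbf{w}]*\mca{M}_\A^{-1})[\mathbf{w}']*(\mca{M}_\A[\mathbf{w}']*\mca{M}_\A^{-1})$.

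One small expository point: you justify the base case by ``applying the quantum commutator identity \eqref{eq_commutator} to $\mca{E}=\tep_\A$'', but \eqref{eq_commutator} lives in the quantum torus and $\tep_\A$ lies in the Hall algebra, so that literal substitution requires the conjectural $I_{\mr{KS}}$ from \S\ref{subsec_idea}. What you actually need, and what the paper uses implicitly, is the unconditional Poisson relation $\{\mathbf{y}^{\mathbf{v}}_\sigma,x_{i,\sigma}\}=\chi(\mathbf{w}_i,\mathbf{v})\,x_{i,\sigma}\cdot\mathbf{y}^{\mathbf{v}}_\sigma$ in $\DQT_{\mr{sc},\sigma}$ applied to $I_\sigma(\hep_\A)$; this gives $\{I_\sigma(\hep_\A),x_{i,\sigma}\}=x_{i,\sigma}\cdot I_\sigma(\hep_\A\{\mathbf{w}_i\})$ directly, without passing through $\mathbb{L}$ at all.
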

\begin{proof}
We define ${E}_{i,\A}^{\langle p\rangle}\in \hQT_{\mr{sc},\sigma}(\A)$ by 
\[
\bigl(\{I_\sigma(\hep_\A),-\}\bigr)^p(x_{i,\sigma})={E}_{i,\A}^{\langle p\rangle}\cdot x_{i,\sigma}.
\]
By \eqref{eq_exp}, it is suffice to show that ${E}_{i,\A}^{\langle p\rangle}=I_\sigma\Bigl(\mca{E}_{i,\A}^{\langle p\rangle}\big|_{\mathbb{L}=1}\Bigr)$. Since we have
\begin{align*}
&\Bigl\{I_\sigma(\hep_\A),I_\sigma\bigl(\mca{E}_{i,\A}^{\langle p\rangle}\big|_{\mathbb{L}=1}\bigr)\cdot x_{i,\sigma}\Bigr\}\\
&\ =\ 
\Bigl(I_\sigma(\hep_\A\{\mathbf{w}_i\})\cdot I_\sigma\bigl(\mca{E}_{i,\A}^{\langle p\rangle}\big|_{\mathbb{L}=1}\bigr)
+\Bigl\{I_\sigma(\hep_\A), I_\sigma\bigl(\mca{E}_{i,\A}^{\langle p\rangle}\big|_{\mathbb{L}=1}\bigr) \Bigr\}\Bigr)\cdot x_{i,\sigma}\\
&\ =\ I_\sigma\biggl(\hep_\A\{\mathbf{w}_i\}\cdot \mca{E}_{i,\A}^{\langle p\rangle}\big|_{\mathbb{L}=1} + \Bigl\{\hep_\A, \mca{E}_{i,\A}^{\langle p\rangle}\big|_{\mathbb{L}=1} \Bigr\}\biggr)\cdot x_{i,\sigma}\\
&\overset{\eqref{eq_517}}{=}I_\sigma\bigl(\mca{E}_{i,\A}^{\langle p\rangle}\big|_{\mathbb{L}=1}\bigr)\cdot x_{i,\sigma},
\end{align*}
the first equation follows by induction.
The second one follows since we have
\[
\mca{M}_{\A}[\mathbf{w}+\mathbf{w}']*\mca{M}_{\A}^{-1}
=
\bigl(\mca{M}_{\A}[\mathbf{w}]*\mca{M}_{\A}^{-1}\bigr)[\mathbf{w}']*
\bigl(\mca{M}_{\A}[\mathbf{w}']*\mca{M}_{\A}^{-1}\bigr).
\]
\end{proof}
Similarly we have the following:
\begin{prop}\label{prop_commutator2}
Let $\mca{C}$ be one of $\A_\kk$, $\mca{T}_\kk$ and $\mca{S}{(r)}$. Then we have
\begin{align*}
\widehat{\mr{Ad}}_{\mca{C},\sigma}(x_{\kk,i,\sigma})&=x_{\kk,i,\sigma}\cdot I_\sigma\Bigl(\bigl(\mca{M}_{\mca{C}}[\mathbf{w}_{\kk,i}]*\mca{M}_{\mca{C}}^{-1}\bigr)\big|_{\mathbb{L}=1}\Bigr).\\
\widehat{\mr{Ad}}_{\mca{C},\sigma}(y_{\kk,i,\sigma})&=y_{\kk,i,\sigma}\cdot \prod_j I_\sigma\Bigl(\bigl(\mca{M}_{\mca{C}}[\mathbf{w}_{\kk,i}]*\mca{M}_{\mca{C}}^{-1}\bigr)\big|_{\mathbb{L}=1}\Bigr)^{\bar{Q}_\kk(j,i)}.
\end{align*}
\end{prop}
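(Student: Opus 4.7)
The plan is to imitate the proof of Proposition \ref{prop_commutator} with $\A$ replaced uniformly by $\mca{C}$ and $\mathbf{w}_i$ replaced by $\mathbf{w}_{\kk,i}$. The setup in \S\ref{subsec_absence} gives algebraic moduli stacks $\mca{M}_\mca{C}$ for each of the categories $\A_\kk$, $\mca{T}_\kk$, $\mca{S}(r)$, and Theorem \ref{thm_pole} ensures $\tep_\mca{C}=(\mathbb{L}-1)\ep_\mca{C}\in\hMH_0(\mca{C})$, so $\hep_\mca{C}\in\widehat{\mr{MH}}_{\mr{sc}}(\mca{C})$ and the automorphism $\widehat{\mr{Ad}}_{\mca{C},\sigma}$ is well defined on $\hDQT_{\mr{sc},\sigma}(\mca{C})$.

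The first step is to establish the analogue of the preparatory lemma $\ep_\A[\mathbf{w}_i]-\ep_\A\in\hMH_0(\A)$, i.e.\ $\ep_\mca{C}[\mathbf{w}_{\kk,i}]-\ep_\mca{C}\in\hMH_0(\mca{C})$ with semiclassical limit $\hep_\mca{C}\{\mathbf{w}_{\kk,i}\}$. This is formal: $\mathbb{L}^{\chi(\mathbf{w}_{\kk,i},\mathbf{v})}-1$ is divisible by $\mathbb{L}-1$ and pairing against $\tep_\mca{C}\in\hMH_0$ produces an element of $\hMH_0$ whose $\mathbb{L}=1$-specialization is the claimed Poisson-theoretic derivation. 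Define
\[
\mca{E}_{\kk,i,\mca{C}}^{\langle p\rangle}:=\sum_j\frac{(-1)^jp!}{j!(p-j)!}\,\ep_\mca{C}[\mathbf{w}_{\kk,i}]^{*(p-j)}*\ep_\mca{C}^{*(j)},
\]
and prove by induction the recurrence
\[
\mca{E}_{\kk,i,\mca{C}}^{\langle p+1\rangle}=(\ep_\mca{C}[\mathbf{w}_{\kk,i}]-\ep_\mca{C})\cdot\mca{E}_{\kk,i,\mca{C}}^{\langle p\rangle}+\frac{1}{\mathbb{L}-1}\bigl[\tep_\mca{C},\mca{E}_{\kk,i,\mca{C}}^{\langle p\rangle}\bigr],
\]
which yields $\mca{E}_{\kk,i,\mca{C}}^{\langle p\rangle}\in\widehat{\mr{MH}}_0(\mca{C})$ and, passing to $\mathbb{L}=1$, the Poisson-differential identity analogous to \eqref{eq_517}.

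The second step is the $x$-variable formula. Defining $E_{\kk,i,\mca{C}}^{\langle p\rangle}\in\hQT_{\mr{sc},\sigma}(\mca{C})$ by $(\{I_\sigma(\hep_\mca{C}),-\})^p(x_{\kk,i,\sigma})=E_{\kk,i,\mca{C}}^{\langle p\rangle}\cdot x_{\kk,i,\sigma}$, I would show $E_{\kk,i,\mca{C}}^{\langle p\rangle}=I_\sigma(\mca{E}_{\kk,i,\mca{C}}^{\langle p\rangle}|_{\mathbb{L}=1})$ inductively by the identical computation as in the proof of Proposition \ref{prop_commutator}: expand the Poisson bracket by Leibniz, use the fact that $I_\sigma$ is a Poisson algebra homomorphism (Theorem \ref{thm_integral}), and apply the analogue of \eqref{eq_517} just established. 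Summing through the exponential $\widehat{\mr{Ad}}_{\mca{C},\sigma}=\exp(\mathrm{ad}_{I_\sigma(\hep_\mca{C})})$ and using $\mca{M}_\mca{C}=\exp(\ep_\mca{C})$ gives the first displayed formula.

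The third step is to deduce the $y$-variable formula from the $x$-variable one via the multiplicative identity
\[
\mca{M}_\mca{C}[\mathbf{w}+\mathbf{w}']*\mca{M}_\mca{C}^{-1}=\bigl(\mca{M}_\mca{C}[\mathbf{w}]*\mca{M}_\mca{C}^{-1}\bigr)[\mathbf{w}']*\bigl(\mca{M}_\mca{C}[\mathbf{w}']*\mca{M}_\mca{C}^{-1}\bigr),
\]
together with the fact that under the pairing $\chi$ on $L_{Q_\kk}\otimes M_{Q_\kk}$ the class $[s_{\kk,i}]$ is encoded by $\mathbf{w}_{\kk,i}$ via $\bar{Q}_\kk(j,i)$, exactly as in the final step of the proof of Proposition \ref{prop_commutator}. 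The main obstacle is really just the bookkeeping for the preparatory lemma and the recurrence above in a general extension-closed subcategory $\mca{C}$; once those are in hand, the Poisson-algebra induction and the multiplicative identity are routine, and no new ingredient beyond what is already present for $\mca{C}=\A$ is required.
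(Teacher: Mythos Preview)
Your proposal is correct and follows exactly the approach the paper intends: the paper introduces Proposition~\ref{prop_commutator2} with the words ``Similarly we have the following'' and gives no separate proof, so the implicit argument is precisely the verbatim rerun of the proof of Proposition~\ref{prop_commutator} with $\A$ replaced by $\mca{C}$ and $\mathbf{w}_i$ by $\mathbf{w}_{\kk,i}$, which is what you have spelled out.
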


\subsubsection{Hilbert/Grassmann in the automorphisms}
\begin{thm}\label{thm_520}
\begin{align*}
\widehat{\mr{Ad}}_{\A,\sigma}(x_{i,\sigma})&=
x_{i,\sigma}\cdot \Biggl(\sum_\mathbf{v}e_\sigma\Bigr(\mathrm{Hilb_{J}}(i,\mathbf{v})\Bigr)\cdot\mathbf{y}_\sigma^{\mathbf{v}}\Biggr),\\
\widehat{\mr{Ad}}_{\A,\sigma}(y_{i,\sigma})&=
y_{i,\sigma}\cdot \prod_j\Biggl(\sum_\mathbf{v}e_\sigma\Bigr(\mathrm{Hilb_{J}}(j,\mathbf{v})\Bigr)\cdot\mathbf{y}_\sigma^{\mathbf{v}}\Biggr)^{\bar{Q}(j,i)}.
\end{align*}
\end{thm}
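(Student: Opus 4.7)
The plan is to reduce Theorem \ref{thm_520} to a chain of identities in the motivic Hall algebra that have already been established, followed by a single application of the integration map.

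First I would invoke Proposition \ref{prop_commutator}, which says
\[
\widehat{\mr{Ad}}_{\A,\sigma}(x_{i,\sigma}) = x_{i,\sigma}\cdot I_\sigma\Bigl(\bigl(\mca{M}_{\A}[\mathbf{w}_i]*\mca{M}_{\A}^{-1}\bigr)\big|_{\mathbb{L}=1}\Bigr),
\]
and similarly for $y_{i,\sigma}$ with an extra product over $j$ weighted by $\bar{Q}(j,i)$. So the content of the theorem is to identify the Hall-algebra element $(\mca{M}_{\A}[\mathbf{w}_i]*\mca{M}_{\A}^{-1})|_{\mathbb{L}=1}$ with the generating series of the ncDT invariants $\mca{Z}_{J,\sigma}^{i}$.

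Next I would use two identities already established in \S\ref{subsec_eq_in_MH}. The first is the ``Hom-as-weighted-moduli'' identity $\mf{Hom}(P_i,\mca{A}) = \mca{M}_{\A}[\mathbf{w}_i]$, proved by realizing $\mca{M}_\A(\vv)$ as a quotient $[\X/GL(\vv)]$ and noting that the pullback of $\mf{Hom}(P_i,\mca{A})(\vv)$ to $\X$ is trivial of rank $\chi(\mathbf{w}_i,\vv)$, with $GL(\vv)$ special (so we get a Zariski fibration). The second is the motivic Hilbert scheme identity (Proposition \ref{prop_mhsi}): $\mr{Hilb}_J(i) = \mf{Hom}(P_i,\mca{A})*\mca{M}_\A^{-1}$. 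Combining them,
\[
\mca{M}_{\A}[\mathbf{w}_i]*\mca{M}_{\A}^{-1} = \mf{Hom}(P_i,\mca{A})*\mca{M}_\A^{-1} = \mr{Hilb}_J(i)
\]
as elements of $\hMH(\A)$; by Theorem \ref{thm_pole} (or direct inspection, since $\mr{Hilb}_J(i)$ is represented by a scheme) this class lies in $\hMH_0(\A)$, so its reduction mod $\mathbb{L}-1$ makes sense.

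Finally, I apply the integration map $I_\sigma$ of Theorem \ref{thm_integral} component by component in the $L$-grading. By construction $I_\sigma(\mr{Hilb}_J(i,\vv)) = e_\sigma(\mr{Hilb}_J(i,\vv))\cdot\mathbf{y}_\sigma^{\vv}$ for both choices of sign, which delivers the first formula. The second formula follows from the first by the product-of-generating-functions expression in Proposition \ref{prop_commutator} together with the observation that the ``$[\mathbf{w}]$-shift'' is additive in $\mathbf{w}$, so no new input is needed.

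In short, this theorem is essentially a bookkeeping step assembling Proposition \ref{prop_commutator}, the motivic Hilbert scheme identity, and the integration map; the conceptual work was done earlier. The only mild obstacle I anticipate is being careful about the completion in which the equality takes place: one needs $\mr{Hilb}_J(i) = \sum_\vv \mr{Hilb}_J(i,\vv)$ to converge in $\hMH_0(\A)$, which follows from the grading by $C_\A\cap L$ used to define $\hMH$, and then needs $I_\sigma$ to extend to this completion grading-by-grading, exactly as noted in the remark after Conjecture \ref{conj_ks}.
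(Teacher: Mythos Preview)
Your proposal is correct and follows essentially the same route as the paper: the paper's proof is a one-line citation of Proposition \ref{prop_mhsi} (the motivic Hilbert scheme identity) together with Proposition \ref{prop_commutator}, and you have simply unpacked the intermediate identification $\mca{M}_{\A}[\mathbf{w}_i]=\mf{Hom}(P_i,\mca{A})$ and the application of $I_\sigma$ that are implicit in that citation. Your remarks on completions and on the second formula are accurate elaborations of what the paper leaves to the reader.
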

\begin{proof}
This is a consequence of the motivic Hilbert scheme identity (Proposition \ref{prop_mhsi}) and Proposition \ref{prop_commutator}.
\end{proof}
\begin{thm}\label{thm_521}
\begin{align*}
\widehat{\mr{Ad}}_{\mca{T}_\kk[-1],\sigma}(x_{\kk,i,\sigma})&=
x_{\kk,i,\sigma}\cdot \Biggl(\sum_\mathbf{v}e_\sigma\Bigr(\mathrm{Grass}(\kk,i,\mathbf{v})\Bigr)\cdot\mathbf{y}_\sigma^{-\mathbf{v}}\Biggr),\\
\widehat{\mr{Ad}}_{\mca{T}_\kk[-1],\sigma}(y_{\kk,i,\sigma})&=
y_{\kk,i,\sigma}\cdot \prod_j\Biggl(\sum_\mathbf{v}e_\sigma\Bigr(\mathrm{Grass}(\kk,j,\mathbf{v})\Bigr)\cdot\mathbf{y}_\sigma^{-\mathbf{v}}\Biggr)^{\bar{Q}(j,i)}.
\end{align*}
\end{thm}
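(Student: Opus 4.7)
The proof is structurally parallel to that of Theorem \ref{thm_520}, with the motivic quiver Grassmannian identity (Proposition \ref{prop_hom4}) playing the role of the motivic Hilbert scheme identity. The plan is to extend the commutator-style Proposition \ref{prop_commutator2} to the case $\mca{C}=\mca{T}_\kk[-1]$, invoke Proposition \ref{prop_hom4}, and account for the sign flip of the $L$-grading coming from the shift $[-1]$.

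First I would check that Proposition \ref{prop_commutator2} holds verbatim for $\mca{C}=\mca{T}_\kk[-1]$, that is,
\[
\widehat{\mr{Ad}}_{\mca{T}_\kk[-1],\sigma}(x_{\kk,i,\sigma}) = x_{\kk,i,\sigma}\cdot I_\sigma\Bigl(\bigl(\mca{M}_{\mca{T}_\kk[-1]}[\mathbf{w}_{\kk,i}]*\mca{M}_{\mca{T}_\kk[-1]}^{-1}\bigr)\big|_{\mathbb{L}=1}\Bigr),
\]
together with the analogous formula for $y_{\kk,i,\sigma}$ carrying the product over $j$ with exponent $\bar{Q}(j,i)$. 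The argument for Proposition \ref{prop_commutator2} uses only the infinitesimal commutator identity and the Poisson-algebra formalism on $\hDQT_{\mr{sc},\sigma}$; neither step requires $\mca{C}$ to lie inside $\A$ rather than $\A[-1]$. Next, the derived-category identity $\Hom(P_{\kk,i},E[-1])\cong\Hom(P_{\kk,i}[1],E)$ together with the shift equivalence $[-1]\colon\mca{M}_{\mca{T}_\kk}\xrightarrow{\sim}\mca{M}_{\mca{T}_\kk[-1]}$ (which sends the class-$\mathbf{v}$ stratum to the class-$(-\mathbf{v})$ stratum in $L$) identifies
\[
\mca{M}_{\mca{T}_\kk[-1]}[\mathbf{w}_{\kk,i}]*\mca{M}_{\mca{T}_\kk[-1]}^{-1} \;\cong\; \mf{Hom}(P_{\kk,i}[1],\mca{T}_\kk)*\mca{M}_{\mca{T}_\kk}^{-1},
\]
and by Proposition \ref{prop_hom4} the right-hand side equals $\mr{Grass}(R_{\kk,i},\A)$. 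Applying $I_\sigma$ and reducing mod $(\mathbb{L}-1)$, the sign flip of the $L$-grading turns each class-$\mathbf{v}$ stratum of the Grassmannian into a contribution of $\mathbf{y}_\sigma^{-\mathbf{v}}$, yielding the desired formula for $\widehat{\mr{Ad}}_{\mca{T}_\kk[-1],\sigma}(x_{\kk,i,\sigma})$; the formula for $y_{\kk,i,\sigma}$ falls out of the second half of the commutator identity in exactly the same fashion.

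The main technical obstacle is the Behrend-weighted case $\sigma=-$. The Grassmannian $\mr{Grass}(\kk;i,\mathbf{v})$ lives over $\mca{M}_\A$, and $e_-(\mr{Grass}(\kk;i,\mathbf{v}))$ is defined via $\pi^*\nu_{\mca{M}_\A}$; but in the motivic identity this Grassmannian is produced on the $\mca{T}_\kk[-1]$-side, where the integration map $I_-$ naturally uses the Behrend function of the shifted moduli stack. One must therefore verify that $I_-$ applied on the shifted side reproduces exactly the weighted Euler characteristic computed via $\pi^*\nu_{\mca{M}_\A}$. This compatibility, which is where the 3-Calabi-Yau hypothesis together with Behrend-function naturality under the shift $[-1]$ enters, is the delicate point of the argument.
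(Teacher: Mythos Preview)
Your approach matches the paper's, whose proof reads in full ``This is a consequence of the motivic quiver Grassmannian identity (Proposition \ref{prop_hom4}) and Proposition \ref{prop_commutator2}''; you have correctly unpacked the shift bookkeeping that this one-line argument leaves implicit. Your caution about the $\sigma=-$ case is reasonable but not an obstruction: under the shift the underlying stack $\mca{M}_{\mca{T}_\kk[-1]}$ is literally $\mca{M}_{\mca{T}_\kk}$ (only the $L$-grading flips sign), so the Behrend weight pulled back from $\mca{M}_\A$ is unchanged, and the paper simply takes this compatibility for granted.
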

\begin{proof}
This is a consequence of the motivic quiver Grassmannian identity (Proposition \ref{prop_hom4}) and Proposition \ref{prop_commutator2}.
\end{proof}
\begin{cor}
The automorphism $\widehat{\mr{Ad}}_{\mca{T}_\kk[-1],\sigma}$ preserves $\DQT_{\mr{sc},\sigma}$ and induces an automorphism of $\QT_{\mr{sc},\sigma}$.
\end{cor}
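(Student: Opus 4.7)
The plan is to deduce the corollary from the explicit formula in Theorem~\ref{thm_521} combined with a finiteness observation and the atomic factorization of $\widehat{\mr{Ad}}_{\mca{T}_\kk[-1],\sigma}$.

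First I would establish the key finiteness claim: the series
\[
F_{\kk,i,\sigma}(\mathbf{y}):=\sum_{\mathbf{v}}e_\sigma\bigl(\mathrm{Grass}(\kk;i,\mathbf{v})\bigr)\cdot \mathbf{y}_\sigma^{-\mathbf{v}}
\]
appearing in Theorem~\ref{thm_521} is in fact a Laurent polynomial, not merely a formal series. This follows because $R_{\kk,i}=H^{1}_{\bar{\A}}(\Phi_\kk^{-1}(\Gamma_{\kk,i}))$ belongs to $\mca{T}_\kk\subset\A=\mr{mod}J$, so $R_{\kk,i}$ is finite dimensional over $\C$; any quotient $R_{\kk,i}\twoheadrightarrow V$ has dimension vector bounded componentwise by $[R_{\kk,i}]\in L$, and so only finitely many $\mathbf{v}$ give nonempty Grassmannians. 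Substituting $F_{\kk,i,\sigma}\in\QT_{\mr{sc},\sigma}$ into the formulas of Theorem~\ref{thm_521} shows that the images of the generators $x_{\kk,i,\sigma}$ and $y_{\kk,i,\sigma}$ lie in $\DQT_{\mr{sc},\sigma}$; moreover, the image of any $y$-generator only involves $y$-variables, hence lies in the subalgebra $\QT_{\mr{sc},\sigma}$.

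The principal obstacle is to verify that the induced maps are actually \emph{automorphisms}, not merely well-defined endomorphisms, of $\DQT_{\mr{sc},\sigma}$ and $\QT_{\mr{sc},\sigma}$. A priori this is a Laurent-phenomenon-type statement: the inverse of a generic Laurent polynomial is only a rational function, so it is not evident that $\widehat{\mr{Ad}}_{\mca{T}_\kk[-1],\sigma}^{-1}$ sends generators into the non-completed torus. The cleanest route is via the forthcoming factorization (Theorems~\ref{thm_factor1} and~\ref{thm_factor2}), which through Proposition~\ref{prop_mfi} and Corollary~\ref{cor_hom_factor} identifies
\[
\widehat{\mr{Ad}}_{\mca{T}_\kk[-1],\sigma}=\bigl(\widehat{\mr{Ad}}_{\mca{S}(1)[-1],\sigma}\bigr)^{\ep(1)}\circ\cdots\circ \bigl(\widehat{\mr{Ad}}_{\mca{S}(l)[-1],\sigma}\bigr)^{\ep(l)}.
\]
Each atomic factor $\widehat{\mr{Ad}}_{\mca{S}(r)[-1],\sigma}^{\pm 1}$ arises from a single simple object $s^{(r)}$, and by the argument recovering \eqref{eq_preCT} equals the elementary Kontsevich--Soibelman transformation that modifies only $x_{k_r,\sigma}$ and $y_{k_r,\sigma}$ by multiplication by $(1+y_{k_r,\sigma}^{-1})^{\pm 1}$. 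Each such transformation is manifestly an automorphism of $\DQT_{\mr{sc},\sigma}$ preserving the subalgebra $\QT_{\mr{sc},\sigma}$, so the composition is too, yielding the corollary. The main difficulty therefore transfers to the upcoming factorization theorems, whose proofs rest on the Hall algebra identity of Corollary~\ref{cor_hom_factor}.
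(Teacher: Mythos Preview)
Your finiteness argument for the first half --- that $R_{\kk,i}\in\mca{T}_\kk\subset\mr{mod}J$ is finite-dimensional, so only finitely many dimension vectors give nonempty Grassmannians and hence each $F_{\kk,i,\sigma}$ is a genuine Laurent polynomial --- is correct and is exactly what the paper means by ``clear from Theorem~\ref{thm_521}.''

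For the second half, however, you take a substantially more laborious route than the paper, and your argument contains a genuine gap. The paper does not invoke the factorization of Theorem~\ref{thm_factor2} at all. It simply observes that $\widehat{\mr{Ad}}_{\mca{T}_\kk[-1],\sigma}=\exp\bigl(\mathrm{ad}_{I_\sigma(\hep_{\mca{T}_\kk[-1]})}\bigr)$, and that the derivation $\mathrm{ad}$ by this element preserves the kernel of the surjection $\pi_\sigma$ of \eqref{eq_pi}. Since both $\exp(\mathrm{ad})$ and its inverse $\exp(-\mathrm{ad})$ then preserve that ideal, the automorphism of the completion descends to an automorphism of the quotient. This one-line Poisson-ideal argument replaces your entire second paragraph and avoids any forward reference.

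The gap in your factorization approach is the assertion that ``each such transformation is manifestly an automorphism of $\DQT_{\mr{sc},\sigma}$.'' This is false. The elementary transformation sends $x_{k_r,\sigma}\mapsto x_{k_r,\sigma}\,(1+y_{k_r,\sigma}^{-1})$, but its inverse sends $x_{k_r,\sigma}\mapsto x_{k_r,\sigma}\,(1+y_{k_r,\sigma}^{-1})^{-1}$, and $(1+y_{k_r,\sigma}^{-1})^{-1}$ is not a Laurent polynomial. Thus the individual factors $\bigl(\widehat{\mr{Ad}}_{\mca{S}(r)[-1],\sigma}\bigr)^{\ep(r)}$ with $\ep(r)=-1$ do \emph{not} preserve $\DQT_{\mr{sc},\sigma}$, even though their ordered composite does by Theorem~\ref{thm_521}. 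This is precisely the Laurent-phenomenon subtlety you flagged --- it reappears at the level of the atoms, so decomposing does not resolve it.
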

\begin{proof}
The first half is clear from Theorem \ref{thm_521} and the second half follows since $\mathrm{ad}$ preserves the kernel of the map given in \eqref{eq_pi}.
\end{proof}
\begin{defn}
Let ${\mr{Ad}}_{\mca{T}_\kk[-1],\sigma}$ denote the automorphism on $\QT_{\mr{sc},\sigma}$ induced by $\widehat{\mr{Ad}}_{\mca{C}_\kk,\sigma}$.
\end{defn}
\begin{ex}
We put
\[
x_{(r),i,\sigma}:=\mathbf{x}_\sigma^{[\Gamma_{(r),i}]},\quad 
y_{(r),i,\sigma}:=\mathbf{y}_\sigma^{[\sigma_{(r),i}]}.
\]
Then we have
\[
\mr{Ad}_{\mca{S}(k),\sigma}(x_{(r),i,\sigma})=
\begin{cases}
x_{(r),i,\sigma} & i\neq k,\\
x_{(r),k,\sigma}(1+(y_{(r-1),k,\sigma})^{-1}) & i=k.
\end{cases}
\]
This gives the cluster transformation for the quiver $Q_{(r-1)}$ \textup{(}see Example \ref{ex_cluster}\textup{)}.
\end{ex}

\subsubsection{Factorization identity}
\begin{lem}\label{lem_factor}
For $X\in \hMH_{0}(\mca{C})$ we have
\[
\mr{Ad}_{\mca{C},\sigma}(I_\sigma(X|_{\mathbb{L}=1}))=I_\sigma\Bigl(\bigl(\mca{M}_{\mca{C}}*X*\mca{M}_{\mca{C}}^{-1}\bigr)|_{\mathbb{L}=1}\Bigr)
\]
\end{lem}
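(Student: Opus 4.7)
The plan is to derive the identity from a conjugation formula inside the motivic Hall algebra and then push it through the integration map, using the Poisson homomorphism property of $I_\sigma$ (Theorem \ref{thm_integral}).

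First, I would work in $\widehat{\mathrm{MH}}(\mca{C})$ and use \eqref{eq_exp}, which writes $\mca{M}_\mca{C} = \exp(\ep_\mca{C})$. The standard formal conjugation identity then gives
\[
\mca{M}_\mca{C} \ast X \ast \mca{M}_\mca{C}^{-1} \;=\; \sum_{n\geq 0}\frac{1}{n!}\,\mathrm{ad}_{\ep_\mca{C}}^{n}(X),
\]
where $\mathrm{ad}_{\ep_\mca{C}}(Y) = \ep_\mca{C}\ast Y - Y\ast \ep_\mca{C}$. Writing $\ep_\mca{C} = \tep_\mca{C}/(\mathbb{L}-1)$ turns this into $\sum_{n}\tfrac{1}{n!(\mathbb{L}-1)^n}\,\mathrm{ad}_{\tep_\mca{C}}^{n}(X)$. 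Since $\tep_\mca{C}$ and $X$ both lie in $\widehat{\mathrm{MH}}_0(\mca{C})$ (Theorem \ref{thm_pole} and the hypothesis on $X$) and $\widehat{\mathrm{MH}}_0$ is a subring, each single commutator $\tfrac{1}{\mathbb{L}-1}\mathrm{ad}_{\tep_\mca{C}}(\cdot)$ stays in $\widehat{\mathrm{MH}}_0(\mca{C})$, and its reduction modulo $\mathbb{L}-1$ is by definition the Poisson bracket $\{\hep_\mca{C},\cdot\}$ on $\widehat{\mathrm{MH}}_{\mr{sc}}(\mca{C})$. Iterating this observation $n$ times yields
\[
\Bigl(\mca{M}_\mca{C} \ast X \ast \mca{M}_\mca{C}^{-1}\Bigr)\Big|_{\mathbb{L}=1} \;=\; \sum_{n\geq 0}\frac{1}{n!}\,\{\hep_\mca{C},\,\cdot\,\}^{n}\bigl(X|_{\mathbb{L}=1}\bigr)
\]
inside $\widehat{\mathrm{MH}}_{\mr{sc}}(\mca{C})$.

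Finally, applying the Poisson homomorphism $I_\sigma$ (Theorem \ref{thm_integral}) term by term gives
\[
I_\sigma\Bigl(\bigl(\mca{M}_\mca{C}\ast X \ast \mca{M}_\mca{C}^{-1}\bigr)\big|_{\mathbb{L}=1}\Bigr) \;=\; \sum_{n\geq 0}\frac{1}{n!}\,\mathrm{ad}_{I_\sigma(\hep_\mca{C})}^{n}\bigl(I_\sigma(X|_{\mathbb{L}=1})\bigr) \;=\; \exp\!\bigl(\mathrm{ad}_{I_\sigma(\hep_\mca{C})}\bigr)\bigl(I_\sigma(X|_{\mathbb{L}=1})\bigr),
\]
which is exactly $\widehat{\mr{Ad}}_{\mca{C},\sigma}\bigl(I_\sigma(X|_{\mathbb{L}=1})\bigr)$ by the definition in \S\ref{subsec_531}, and descends to $\mr{Ad}_{\mca{C},\sigma}$ on $\QT_{\mr{sc},\sigma}$ by the corollary following Theorem \ref{thm_521}.

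The main technical point to check carefully is the divisibility and reduction step: one must verify that $\mathrm{ad}_{\tep_\mca{C}}^{n}(X)$ is indeed divisible by $(\mathbb{L}-1)^{n}$ in $\widehat{\mathrm{MH}}_0(\mca{C})$ and that successive reductions modulo $\mathbb{L}-1$ genuinely produce the iterated Poisson bracket $\{\hep_\mca{C},\cdots\}^n$ rather than a correction term. Convergence of the infinite sum in the completion $\widehat{\mathrm{MH}}_0(\mca{C})$ and its commutation with $I_\sigma$ are handled by the $L$-grading: $\ep_\mca{C}$ has no component of class $0$, so on each fixed graded piece only finitely many terms in the exponential series contribute. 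Everything else is formal manipulation of exponentials and conjugation.
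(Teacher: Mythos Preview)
Your proposal is correct and follows essentially the same route as the paper: write $\mca{M}_\mca{C}=\exp(\ep_\mca{C})$, convert conjugation into $\exp(\mathrm{ad}_{\ep_\mca{C}})$, substitute $\ep_\mca{C}=\tep_\mca{C}/(\mathbb{L}-1)$, reduce modulo $\mathbb{L}-1$ to obtain $\exp(\{\hep_\mca{C},-\})$, and then push through the Poisson homomorphism $I_\sigma$. The paper records this in three displayed lines; your version simply spells out the divisibility and convergence issues (iterated commutators landing in $\widehat{\mathrm{MH}}_0$, finiteness on each graded piece) that the paper leaves implicit.
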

\begin{proof}
Note that we have
\begin{align*}
\mca{M}_{\mca{C}}*{X}*\mca{M}_{\mca{C}}^{-1}|_{\mathbb{L}=1} 
&=
\bigl(\exp([\ep_{\mca{C}},-])({X})\bigr)|_{\mathbb{L}=1}\\
&=
\Bigl(\exp\Bigl(\frac{1}{\mathbb{L}-1}[\tep_{\mca{C}},-]\Bigr)({X})\Bigr)|_{\mathbb{L}=1}\\
&=
\exp(\{\hep_{\mca{C}},-\})({X}|_{\mathbb{L}=1}).
\end{align*}
Then the claim follows since $I_\sigma$ respects the Poisson bracket.
\end{proof}
\begin{thm}\label{thm_factor1}
\begin{align*}
\widehat{\mr{Ad}}_{\A,\sigma}=
\widehat{\mr{Ad}}_{\mca{T}_\kk,\sigma}\circ
\widehat{\mr{Ad}}_{\mca{T}_\kk^\bot,\sigma},\quad
\widehat{\mr{Ad}}_{\A_\kk,\sigma}=
\widehat{\mr{Ad}}_{\mca{T}_\kk^\bot,\sigma}\circ
\widehat{\mr{Ad}}_{\mca{T}_\kk[-1],\sigma}.
\end{align*}
\end{thm}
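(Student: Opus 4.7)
The plan is to verify both identities by evaluating them on the torus generators $x_{\bullet,\sigma}$ and $y_{\bullet,\sigma}$, exploiting the motivic torsion pair identities (Proposition \ref{prop_mtpi}) together with Lemma \ref{lem_factor}. Since the semiclassical double torus is commutative as an algebra, an automorphism is determined by its values on these generators.

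For the first identity, I would apply Proposition \ref{prop_commutator} (and its analogue, Proposition \ref{prop_commutator2}, for $\mca{T}_\kk$ and $\mca{T}_\kk^\bot$) to express each side on $x_{i,\sigma}$ as $x_{i,\sigma}$ times an explicit image of the integration map. The key algebraic observation is that the twist $\bullet[\mathbf{w}_i]$ is multiplicative on $\hMH$, so combining it with the torsion pair identity $\mca{M}_\A = \mca{M}_{\mca{T}_\kk}*\mca{M}_{\mca{T}_\kk^\bot}$ yields
\[
\mca{M}_\A[\mathbf{w}_i]*\mca{M}_\A^{-1}
= \bigl(\mca{M}_{\mca{T}_\kk}[\mathbf{w}_i] * \mca{M}_{\mca{T}_\kk}^{-1}\bigr) * \mca{M}_{\mca{T}_\kk} * \bigl(\mca{M}_{\mca{T}_\kk^\bot}[\mathbf{w}_i] * \mca{M}_{\mca{T}_\kk^\bot}^{-1}\bigr) * \mca{M}_{\mca{T}_\kk}^{-1}.
\]
Applying $I_\sigma$ in the semiclassical limit and invoking Lemma \ref{lem_factor} to rewrite the conjugation by $\mca{M}_{\mca{T}_\kk}$ of the middle bracketed factor as the automorphism $\widehat{\mr{Ad}}_{\mca{T}_\kk,\sigma}$ acting on the image of $I_\sigma$, the right-hand side reassembles into $\widehat{\mr{Ad}}_{\mca{T}_\kk,\sigma}\bigl(\widehat{\mr{Ad}}_{\mca{T}_\kk^\bot,\sigma}(x_{i,\sigma})\bigr)$. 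The identity on $y_{i,\sigma}$ follows from the same calculation applied with each $\mathbf{w}_j$ in place of $\mathbf{w}_i$, taking the $\bar{Q}(j,i)$-weighted product, since the formula for $\widehat{\mr{Ad}}(y_{i,\sigma})$ in Proposition \ref{prop_commutator} is built out of the $\widehat{\mr{Ad}}(x_{j,\sigma})$ formulas.

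The second identity $\widehat{\mr{Ad}}_{\A_\kk,\sigma} = \widehat{\mr{Ad}}_{\mca{T}_\kk^\bot,\sigma}\circ\widehat{\mr{Ad}}_{\mca{T}_\kk[-1],\sigma}$ follows by the identical recipe, using now $\mca{M}_{\A_\kk}=\mca{M}_{\mca{T}_\kk^\bot}*\mca{M}_{\mca{T}_\kk[-1]}$ and evaluating on the generators $x_{\kk,i,\sigma}$, $y_{\kk,i,\sigma}$ of $\hDQT(\A_\kk)$ via the commutator identity with weight $\mathbf{w}_{\kk,i}$. The main technical obstacle I anticipate is a bookkeeping issue around completions: Lemma \ref{lem_factor} is stated for $X \in \hMH_0(\mca{C})$, but in the computation above one needs to conjugate $\mca{M}_{\mca{T}_\kk^\bot}[\mathbf{w}_i]*\mca{M}_{\mca{T}_\kk^\bot}^{-1}\in \hMH_0(\mca{T}_\kk^\bot)$ by $\mca{M}_{\mca{T}_\kk}\in \hMH(\mca{T}_\kk)$, so all manipulations have to be performed inside the ambient completion $\hMH_0(\A)$. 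The proof that the semiclassical limit commutes with this conjugation is already the content of Lemma \ref{lem_factor} (via the identity $\mca{M}_{\mca{T}_\kk}*X*\mca{M}_{\mca{T}_\kk}^{-1}|_{\LL=1}=\exp(\{\hep_{\mca{T}_\kk},-\})(X|_{\LL=1})$ together with Theorem \ref{thm_pole}), and simply needs to be applied in this slightly enlarged ambient setting.
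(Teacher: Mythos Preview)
Your approach is correct and is essentially the same as the paper's. The paper actually does not write out a separate proof of Theorem \ref{thm_factor1}: it proves the multi-factor identity (Theorem \ref{thm_factor2}) by evaluating the composite automorphism on $\mathbf{x}_\sigma^{\mathbf{w}}$, using Proposition \ref{prop_commutator} and Lemma \ref{lem_factor} inductively to produce a telescoping product, and then applying the motivic factorization identity; it then simply remarks that Theorem \ref{thm_factor1} is proved ``in the same way'' with Proposition \ref{prop_mtpi} in place of Proposition \ref{prop_mfi}. Your write-up is exactly this two-factor specialization, and your flagged bookkeeping issue about applying Lemma \ref{lem_factor} in the ambient completion $\hMH_0(\A)$ rather than $\hMH_0(\mca{T}_\kk)$ is a genuine but minor point that the paper leaves implicit.
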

\begin{thm}\label{thm_factor2}
\[
{\mr{Ad}}_{\mca{T}_\kk[-1],\sigma}=
\Bigl({\mr{Ad}}_{\mca{S}(1)[-1],\sigma}\Bigr)^{\ep(1)}
\circ\cdots\circ
\Bigl({\mr{Ad}}_{\mca{S}(l)[-1],\sigma}\Bigr)^{\ep(l)}.
\]
\end{thm}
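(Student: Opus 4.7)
The plan is to derive Theorem \ref{thm_factor2} as a direct parallel of Theorem \ref{thm_factor1}, replacing the motivic torsion pair identity with the motivic factorization identity (Proposition \ref{prop_mfi}). The three ingredients are: the exponential definition $\widehat{\mr{Ad}}_{\mca{C},\sigma} = \exp(\mathrm{ad}_{I_\sigma(\hep_\mca{C})})$, the Poisson homomorphism property of the integration map $I_\sigma$ (Theorem \ref{thm_integral}), and the commutator identity
\[
\bigl(\mca{M}_\mca{C}*X*\mca{M}_\mca{C}^{-1}\bigr)\big|_{\mathbb{L}=1} = \exp(\{\hep_\mca{C},-\})\bigl(X|_{\mathbb{L}=1}\bigr)
\]
that was extracted in the proof of Lemma \ref{lem_factor}. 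This last identity says precisely that, under $I_\sigma$, conjugation by $\mca{M}_\mca{C}$ inside the Hall algebra realizes the automorphism $\mr{Ad}_{\mca{C},\sigma}$ on $\QT_{\mr{sc},\sigma}$.

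Starting from Proposition \ref{prop_mfi}, namely
\[
\mca{M}_{\mca{T}_\kk}=\bigl(\mca{M}_{\mca{S}(1)}\bigr)^{\ep(1)}*\cdots*\bigl(\mca{M}_{\mca{S}(l)}\bigr)^{\ep(l)},
\]
I would iterate the commutator identity so that, for any class $X\in\hMH_0$ lying in the appropriate subcategory,
\[
\mca{M}_{\mca{T}_\kk}*X*\mca{M}_{\mca{T}_\kk}^{-1}=\mca{M}_{\mca{S}(1)}^{\ep(1)}*\cdots*\mca{M}_{\mca{S}(l)}^{\ep(l)}*X*\mca{M}_{\mca{S}(l)}^{-\ep(l)}*\cdots*\mca{M}_{\mca{S}(1)}^{-\ep(1)}.
\]
Taking the semiclassical limit and applying $I_\sigma$, the left-hand side yields $\widehat{\mr{Ad}}_{\mca{T}_\kk,\sigma}(I_\sigma(X|_{\mathbb{L}=1}))$ by Lemma \ref{lem_factor}, while iterating the same lemma on the right-hand side produces the composition $(\widehat{\mr{Ad}}_{\mca{S}(1),\sigma})^{\ep(1)}\circ\cdots\circ(\widehat{\mr{Ad}}_{\mca{S}(l),\sigma})^{\ep(l)}$.

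To conclude, I would test both automorphisms against the generators $x_{\kk,i,\sigma}$ and $y_{\kk,i,\sigma}$ of $\QT_{\mr{sc},\sigma}$. By Proposition \ref{prop_commutator2}, the action of $\widehat{\mr{Ad}}_{\mca{T}_\kk,\sigma}$ on $x_{\kk,i,\sigma}$ is $x_{\kk,i,\sigma}\cdot I_\sigma\bigl((\mca{M}_{\mca{T}_\kk}[\mathbf{w}_{\kk,i}]*\mca{M}_{\mca{T}_\kk}^{-1})|_{\mathbb{L}=1}\bigr)$. Applying the motivic factorization identity to the first factor together with the factorization Corollary \ref{cor_hom_factor} (which factors $\mf{Hom}(P_{\kk,i}[1],\mca{T}_\kk)$) and then passing through $I_\sigma$ matches the stated product, with the exponents $\ep(r)$ exactly accounting for the direction of each tilting step. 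A shift by $[-1]$ translates $\widehat{\mr{Ad}}_{\mca{T}_\kk,\sigma}$ into $\mr{Ad}_{\mca{T}_\kk[-1],\sigma}$ on $\QT_{\mr{sc},\sigma}$ via the involution $\Sigma$, and the same shift on each factor gives $\mr{Ad}_{\mca{S}(r)[-1],\sigma}^{\ep(r)}$, yielding the asserted identity.

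The main obstacle is the bookkeeping inside the completed motivic Hall algebra. First, one must ensure that $\mca{M}_{\mca{S}(r)}^{\pm 1}$ and all the intermediate conjugations $\mca{M}_{\mca{S}(1)}^{\ep(1)}*\cdots*X*\cdots*\mca{M}_{\mca{S}(1)}^{-\ep(1)}$ live in $\hMH_0$ of the appropriate subcategory, so that the semiclassical limit $\mathbb{L}=1$ commutes with the factorization; this is where the absence-of-poles Theorem \ref{thm_pole} is essential. Second, one must reconcile the BCH-type expansion of $\exp(\{\hep_{\mca{T}_\kk},-\})$ with the iterated composition of $\exp(\{\pm\hep_{\mca{S}(r)},-\})$; this is transparent at the level of the Hall algebra (where conjugation factors literally along the product) and is transferred to the Poisson side by the homomorphism property of $I_\sigma$, but keeping the order of factors straight and matching the signs $\ep(r)$ requires care.
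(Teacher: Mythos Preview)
Your proposal is correct and follows essentially the same approach as the paper: reduce to the unshifted identity $\mr{Ad}_{\mca{T}_\kk,\sigma}=(\mr{Ad}_{\mca{S}(1),\sigma})^{\ep(1)}\circ\cdots\circ(\mr{Ad}_{\mca{S}(l),\sigma})^{\ep(l)}$ via $\Sigma$, then evaluate both sides on $\mathbf{x}_\sigma^{\mathbf{w}}$ using Proposition~\ref{prop_commutator2}, Lemma~\ref{lem_factor}, the motivic factorization identity (Proposition~\ref{prop_mfi}), and Corollary~\ref{cor_hom_factor}. The paper organizes the computation as an explicit induction on $r$, writing out the intermediate product $\prod_{r'} I_\sigma\bigl(\delta_{(r)}^{\ep(r)}*\cdots*(\delta_{(r')}[\mathbf{w}])^{\ep(r')}*\cdots*\delta_{(r)}^{-\ep(r)}\big|_{\mathbb{L}=1}\bigr)$ and then collapsing it, but the logical content is the same as what you outline.
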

We will show the proof of 
\begin{equation}\label{eq_factorization}
{\mr{Ad}}_{\mca{T}_\kk,\sigma}=
\Bigl({\mr{Ad}}_{\mca{S}(1),\sigma}\Bigr)^{\ep(1)}
\circ\cdots\circ
\Bigl({\mr{Ad}}_{\mca{S}(l),\sigma}\Bigr)^{\ep(l)}
\end{equation}
which is equivalent to Theorem \ref{thm_factor2} (we can prove Theorem \ref{thm_factor1} in the same way). 
We put $\delta(r):=\mca{M}_{\mca{C}(r)}$.
First, we can see the following identity by induction with respect to $r$ using Proposition \ref{prop_commutator} and Lemma \ref{lem_factor} :
\begin{align*}
&\Bigl(\widehat{\mr{Ad}}_{\mca{C}(r),\sigma}\Bigr)^{\ep(r)}
\circ\cdots\circ
\Bigl(\widehat{\mr{Ad}}_{\mca{C}(l),\sigma}\Bigr)^{\ep(l)}(\mathbf{x}_\sigma^\mathbf{w})=\\
&\mathbf{x}_\sigma^\mathbf{w}\cdot \prod_{r'=r}^{l} 
I_\sigma\Bigl(\delta_{(r)}^{\ep(r)}*\cdots *\delta_{(r'-1)}^{\ep(r'-1)}*(\delta_{(r')}[\mathbf{w}])^{\ep(r')}*\delta_{(r')}^{-\ep(r')}*\cdots *\delta_{(r)}^{-\ep(r)}\Big|_{\mathbb{L}=1}\Bigr)
\end{align*}
Then we have
\begin{align*}
&\Bigl(\widehat{\mr{Ad}}_{\mca{C}(1),\sigma}\Bigr)^{\ep(1)}
\circ\cdots\circ
\Bigl(\widehat{\mr{Ad}}_{\mca{C}(l),\sigma}\Bigr)^{\ep(l)}(\mathbf{x}_\sigma^\mathbf{w})\\
& = \mathbf{x}_\sigma^\mathbf{w}\cdot \prod_{r'=1}^{l} 
I_\sigma\Bigl(\delta_{(1)}^{\ep(r)}*\cdots *\delta_{(r'-1)}^{\ep(r'-1)}*(\delta_{(r')}[\mathbf{w}])^{\ep(r')}*\delta_{(r')}^{-\ep(r')}*\cdots *\delta_{(1)}^{-\ep(1)}\Big|_{\mathbb{L}=1}\Bigr)\\
& = \mathbf{x}_\sigma^\mathbf{w}\cdot I_\sigma\Bigl((\delta_{(1)}[\mathbf{w}])^{\ep(1)}*\cdots *(\delta_{(l)}[\mathbf{w}])^{\ep(l)}*\delta_{(l)}^{-\ep(l)}*\cdots *\delta_{(1)}^{-\ep(1)}\Big|_{\mathbb{L}=1}\Bigr)\\
&= \mathbf{x}_\sigma^\mathbf{w}\cdot I_\sigma\Bigl(\mca{M}_{\mca{T}_\kk}[\mathbf{w}_{\kk,i}]*\mca{M}_{\mca{T}_\kk}^{-1}\big|_{\mathbb{L}=1}\Bigr)\\
&= {\mr{Ad}}_{\mca{T}_\kk,\sigma} (\mathbf{x}_\sigma^\mathbf{w}).
\end{align*}
Here we use  Proposition \ref{prop_mfi} and Corollary \ref{cor_hom_factor} for the second equation and Proposition \ref{prop_commutator} for the last one.


\section{Applications for cluster algebras}\label{sec_F_and_g}

\subsection{Quiver with the principal framing}
Let $Q\pf$ be the following quiver:
\begin{description}
\item{vertices} : $I\sqcup I^*$ where $I^*=\{1^*,\ldots,n^*\}$,
\item{arrows} : $\{\text{arrows in $Q$}\}\sqcup \{i^*\to i\mid i\in I\}$.
\end{description}
This is called the quiver with the principal framing associated to $Q$.

A potential $W$ of $Q$ can be taken as a potential of $Q\pf$. 
In the rest of this paper, we assume that $(Q\pf,W)$ is successively f-mutatable with respect to a sequence $\kk$.
We apply Theorem \ref{thm_tilting} for $(Q\pf,W)$ and $\kk$ to get the sequence $\ep'(1),\ldots,\ep'(l)$.
We put
\[
\Phi\pf_{\kk}:=\Phi\pf_{k_l,\ep'(l)}\circ\cdots\circ\Phi\pf_{k_l,\ep'(1)}\colon \bar{\mca{D}}\pf\overset{\sim}{\longrightarrow} \bar{\mca{D}}\pf_\kk.
\]
Let $\phi\pf_{\kk}$ denote the homomorphism induced on the lattices $L_{Q\pf}$ or $M_{Q\pf}$.
Let $\ep(1),\ldots,\ep(l)$ be the sequence associated to the QP $(Q,W)$ and the sequence $\kk$. 
\begin{prop}\label{prop_pf}
\begin{itemize}
\item[\textup{(1)}] $\ep(r)=\ep'(r)$  for any $r$.
\item[\textup{(2)}] $\phi_\kk\pf([s_{i^*}])=[s_{\kk,i^*}]$ for any $i^*\in I^*$.
\item[\textup{(3)}] $\phi_\kk\pf(L_Q)\subset L_{Q_\kk}$ and $\phi_\kk\pf|_{L_Q}\colon L_Q\to L_Q$ coincides with $\phi_\kk$.
\item[\textup{(4)}] $\phi_\kk\pf(M_Q)\subset M_{Q_\kk}$ and $\phi_\kk\pf|_{M_Q}\colon M_Q\to M_Q$ coincides with $\phi_\kk$.
\end{itemize}
\end{prop}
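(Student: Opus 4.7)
The plan is to prove all four claims by simultaneous induction on the length $l$ of $\kk$. The base case $l=0$ is trivial, as every map involved reduces to the identity. For the inductive step, I will assume $(1)$--$(4)$ hold for the truncation $\kk^{(l-1)}$ and analyze the final mutation at $k_l\in I$. Appending one more mutation amounts to applying one of the equivalences $\Phi_{k_l,\pm}$ in both the framed and unframed derived categories, and the compatibility between the two reduces to matching the sign choice and then tracing the lattice formulas \eqref{eq_yy'}--\eqref{eq_phi_for_s}.

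The central structural observation is that the framing vertices $I^*$ are sources in $Q\pf$ that never get mutated (since $\kk\subset I$). Inspecting the triangles of Theorem \ref{thm_KY} for a mutation at $k\in I$ in $Q\pf$, we see that $\bigoplus_j(\Gamma_j\pf)^{Q\pf(k,j)}$ contains no framing projectives (as $Q\pf(k,j^*)=0$ initially), while $\bigoplus_j(\Gamma_j\pf)^{Q\pf(j,k)}$ does involve $\Gamma_{k^*}\pf$ through the arrow $k^*\to k$. To establish $(1)$, recall that $\ep(r)$ is determined by whether $\Phi_{\kk^{(r-1)}}^{-1}(s_{\kk^{(r-1)},k_r})$ lies in $\bar{\mca{F}}_{\kk^{(r-1)}}\subset\bar{\mca{A}}$ (sign $+$) or in $\bar{\mca{T}}_{\kk^{(r-1)}}[-1]\subset\bar{\mca{A}}[-1]$ (sign $-$), equivalently by the sign of its dimension vector in $L_Q$. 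By the inductive hypothesis $(3)$, the natural projection $L_{Q\pf}\to L_Q$ forgetting the $I^*$-coordinates intertwines $\phi_{\kk^{(r-1)}}\pf$ with $\phi_{\kk^{(r-1)}}$, so the $I$-components of the framed and unframed dimension vectors of the relevant simples coincide. Since the sign is controlled by this $I$-component, we obtain $\ep'(l)=\ep(l)$.

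With $(1)$ in hand, claims $(3)$ and $(4)$ follow by direct iteration of the lattice formulas \eqref{eq_yy'} and \eqref{eq_phi_for_s}, using that the $I\times I$-block of $Q\pf$ agrees with $Q$ and that this property is preserved under mutation at $I$-vertices (any newly created arrow either lies in the $I\times I$-block and matches the unframed mutation, or involves a vertex in $I^*$ and is thus orthogonal to the projection). Claim $(2)$ is the main obstacle: mutating at $k_r\in I$ can create arrows $k_r^*\to v$ or $v\to k_r^*$ for $v\in I$ in $Q_{\kk^{(r)}}\pf$, so the subsequent application of \eqref{eq_phi_for_s} to $\phi_{k_{r+1},\ep(r+1)}([s_{\kk^{(r)},i^*}])$ picks up potentially nonzero contributions from $Q_{\kk^{(r)}}\pf(k_{r+1},i^*)$ or $Q_{\kk^{(r)}}\pf(i^*,k_{r+1})$. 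The essential point is to verify that these contributions are exactly those required to keep $\phi_{\kk^{(r)}}\pf([s_{i^*}])$ equal to the new simple $[s_{\kk^{(r)},i^*}]$; intuitively, the evolving framing arrows track the $c$-vector combinatorics indicated in the introductory table. I would carry this out by expressing the iteration in matrix form and checking step by step that the potentially surviving $[s_{\kk^{(r)},j}]$-coefficients ($j\in I$) in $\phi_{\kk^{(r)}}\pf([s_{i^*}])$ vanish against the corresponding framing-arrow counts in $Q_{\kk^{(r)}}\pf$, which is the exact compatibility expected from the principal-coefficients philosophy.
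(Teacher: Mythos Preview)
Your inductive framework and your handling of (1), (3), (4) match the paper's approach. The gap is in (2): you correctly locate the difficulty, but ``express the iteration in matrix form and check step by step that the coefficients vanish against the framing-arrow counts'' is the statement to be proved, not an argument for it. Nothing in your outline explains \emph{why} the extra term in \eqref{eq_phi_for_s} involving the framing arrows disappears precisely when the sign $\ep(l)$ is the one dictated by Theorem~\ref{thm_tilting}.

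The paper supplies that missing mechanism via an Euler-form computation. By the inductive hypothesis (2) one has $\phi_{\kk^-}\pf([s_{i^*}])=[s_{\kk^-,i^*}]$, so the question is whether $\phi_{k_l,\ep(l)}$ carries $[s_{\kk^-,i^*}]$ to $[s_{\kk,i^*}]$; by \eqref{eq_phi_for_s} this amounts to showing there are no arrows from $k_l$ to $i^*$ in $Q_{\kk^-}\pf$ when $\ep(l)=+$ (and dually when $\ep(l)=-$). The signed arrow count between $k_l$ and $i^*$ in $Q_{\kk^-}\pf$ equals $\chi([s_{\kk^-,k_l}],[s_{\kk^-,i^*}])$, and since $\phi_{\kk^-}\pf$ preserves $\chi$, the inductive hypotheses (2) and (3) rewrite this as $\chi\bigl((\phi_{\kk^-})^{-1}[s_{\kk^-,k_l}],\,[s_{i^*}]\bigr)$ in $L_{Q\pf}$. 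Now the condition $\ep(l)=+$ says exactly that $(\phi_{\kk^-})^{-1}[s_{\kk^-,k_l}]\in C_{\mca{A}}$, i.e.\ has non-negative $I$-coordinates and zero $I^*$-coordinates; since in the original $Q\pf$ every arrow incident to $i^*$ points out of $i^*$, the pairing $\chi([s_j],[s_{i^*}])$ for $j\in I$ has a fixed sign, forcing the framing-arrow direction in $Q_{\kk^-}\pf$ to be compatible with $\ep(l)$. This single computation is what makes all of (2)--(4) go through via \eqref{eq_yy'}--\eqref{eq_phi_for_s}, and it is also the content behind the $c$-vector description in Remark~\ref{rem_c}. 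Your proposal gestures at this (``the evolving framing arrows track the $c$-vector combinatorics'') but never carries out the pairing argument that makes it rigorous.
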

\begin{proof}
We prove all the claims together by induction with respect to the length of the sequence $\kk$. 

Assume the claim holds for the sequence $\kk^-=(k_1,\ldots,k_{l-1})$.
Note that $\varepsilon (l)$ (resp. $\varepsilon' (l)$) is determined by the condition 
\[
\varepsilon(l)\times (\phi_{\kk^-})^{-1}([s_{{\kk^-},k_l}])\in C_{\mca{A}}\subset L_Q\quad 
\text{(resp. $\varepsilon'(l)\times (\phi_{\kk^-}\pf)^{-1}([s_{{\kk^-},k_l}])\in C_{\mca{A}\pf}\subset L_{Q\pf}$)}.
\]
By the induction assumption (3), we get $\varepsilon (l)=\varepsilon' (l)$.

We assume $\varepsilon (l+1)=+$ (for the case of $\varepsilon (l+1)=-$, we can see in the same way). Then we have
\begin{align*}
Q_{\kk'}(k_{l+1},i^*) & = 
\chi([s_{{\kk'},k_{l+1}}],[s_{{\kk'},i^*}])\\
& = \chi((\phi_\kk\pf)^{-1}[s_{{\kk'},k_{l+1}}],(\phi_{\kk'}\pf)^{-1}[s_{{\kk'},i^*}])\\
& = \chi((\phi_{\kk'})^{-1}[s_{{\kk'},k_{l+1}}],[s_{i^*}])\\
& \geq 0
\end{align*}
for any $i^*$. Then the claims follow \eqref{eq_yy'} and \eqref{eq_phi_for_s}.
\end{proof}
\begin{rem}\label{rem_c}
{For a sequence $\kk$ and a vertex $i\in I$, the vector $(Q_{\kk}(i,j^*))_{j^*\in I^+}$ is so called the {\it $c$-vector}. Now we see that the $c$-vector is given by $(\phi_{\kk'})^{-1}[s_{\kk,i}]$.}
\end{rem}

We have the following triangulated categories :
\begin{description}
\item{$\bar{\mca{D}}^{\mr{pf}}$} : $=\mca{D}\Gamma_{(Q^\mr{pf},W)}$,
\item{${\mca{D}}^{\mr{pf}}$} : $=\mca{D}^\mr{fd}\Gamma_{(Q^\mr{pf},W)}$,
\item{$\bar{\mca{D}}'$} : the full subcategory of $\bar{\mca{D}}^{\mr{pf}}$ consisting of objects whose cohomologies are supported on $I$,
\item{$\mca{D}'$} : $={\mca{D}}^{\mr{pf}}\cap \bar{\mca{D}}'$
\end{description}
The canonical t-structure of $\bar{\mca{D}}^{\mr{pf}}$ induces t-structures of ${\mca{D}}^{\mr{pf}}$, $\bar{\mca{D}}'$ and $\mca{D}'$.
Let $\bar{\mca{A}}^{\mr{pf}}$, ${\mca{A}}^{\mr{pf}}$, $\bar{\mca{A}}'$ and $\mca{A}'$ denote the cores of t-structures. 
\begin{lem}\label{lem_82}
\begin{itemize}
\item[\textup{(1)}] $\Phi_\kk\pf(s_{i^*})=s_{\kk,i^*}$ for any $i^*\in I^*$.
\item[\textup{(2)}] $\bar{\mca{T}}_\kk\pf\subset \mca{A}'$.
\end{itemize}
\end{lem}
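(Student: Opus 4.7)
The plan is to prove (1) and (2) by simultaneous induction on the length $l$ of $\kk$, writing $\kk^- := (k_1,\ldots,k_{l-1})$; the base case $l = 0$ is immediate since $\Phi_\kk\pf = \mathrm{id}$ and $\bar{\mca{T}}_\kk\pf = 0$.

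For the inductive step of (1), by induction $\Phi_{\kk^-}\pf(s_{i^*}) = s_{\kk^-, i^*}$, so it suffices to show $\Phi_{k_l,\ep(l)}\pf(s_{\kk^-, i^*}) = s_{\kk, i^*}$. Since $i^* \in I^*$ is distinct from $k_l \in I$, the simple modules $s_{\kk^-, k_l}$ and $s_{\kk^-, i^*}$ of $J_{\mu_{\kk^-}(Q\pf, W)}$ have all Homs between them vanishing. Consequently $s_{\kk^-, i^*} \in (\mca{S}_{k_l})^\bot \cap {}^\bot(\mca{S}_{k_l})$, and by \eqref{eq_tilting+}/\eqref{eq_tilting-} it lies in the tilted heart $\bar{\mca{A}}_\kk\pf$, hence $\Phi_{k_l,\ep(l)}\pf(s_{\kk^-, i^*})$ is a genuine module of $J_{\mu_\kk(Q\pf, W)}$. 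By Proposition \ref{prop_pf}(2) its class is $[s_{\kk, i^*}]$, so it has total dimension $1$ supported at $i^*$ and must coincide with the simple $s_{\kk, i^*}$.

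For the inductive step of (2), I would first check that $s^{(l),\pf} \in \mca{A}'$: its class in $L_{Q\pf}$ equals $\pm\phi_{\kk^-}\pf^{-1}([s_{\kk^-, k_l}])$ (with the sign coming from the possible shift in the definition of $s^{(l),\pf}$ in \S\ref{subsec_comp}), and Proposition \ref{prop_pf}(3) places this inside $L_Q \subset L_{Q\pf}$, so its $I^*$-components vanish; finite-dimensionality is obtained exactly as in the argument giving $\bar{\mca{T}}_\kk \subset \mca{A}$ at the end of \S\ref{subsec_comp}. Then I would invoke the composition-of-tilts description of $(\bar{\mca{T}}_\kk\pf, \bar{\mca{F}}_\kk\pf)$ from $(\bar{\mca{T}}_{\kk^-}\pf, \bar{\mca{F}}_{\kk^-}\pf)$ via Lemma \ref{lem_23} (when $\ep(l) = +$) or Lemma \ref{lem_24} (when $\ep(l) = -$). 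In the ``$-$'' case Lemma \ref{lem_24} forces $\bar{\mca{T}}_\kk\pf \subset \bar{\mca{T}}_{\kk^-}\pf \subset \mca{A}'$ directly. In the ``$+$'' case Lemma \ref{lem_23} exhibits any $X \in \bar{\mca{T}}_\kk\pf$ as an extension in $\bar{\mca{A}}\pf$ of a sub $T_X \in \bar{\mca{T}}_{\kk^-}\pf \subset \mca{A}'$ by a quotient $F_X$ that is a direct sum of copies of $s_{\kk^-, k_l}$ in $\bar{\mca{A}}_{\kk^-}\pf$, which viewed in $\bar{\mca{A}}\pf$ is a direct sum of copies of $s^{(l),\pf} \in \mca{A}'$; since the condition ``dimension vector vanishes on $I^*$'' is preserved under extensions in $\bar{\mca{A}}\pf$, we conclude $X \in \mca{A}'$.

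The most delicate bookkeeping I anticipate is matching the sign conventions of Lemmas \ref{lem_23}/\ref{lem_24} against the sign $\ep(l)$ governing the construction of $\bar{\mca{T}}_\kk\pf$, and carefully identifying the simple $s_{\kk^-, k_l} \in \bar{\mca{A}}_{\kk^-}\pf$ with $s^{(l),\pf} \in \bar{\mca{A}}\pf$ (with the shift dictated by $\ep(l)$) so that the quotient $F_X$ appearing in the Lemma \ref{lem_23} extension really is a direct sum of copies of $s^{(l),\pf}$. Once this sign-and-shift bookkeeping is in place, the remainder is a mechanical unwinding built on Proposition \ref{prop_pf}.
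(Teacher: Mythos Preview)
Your proof is correct and follows essentially the same approach as the paper's, just with the induction made explicit. The paper's argument is terser: for (1) it cites only Proposition~\ref{prop_pf}(2) (leaving implicit why $\Phi_\kk\pf(s_{i^*})$ remains in the heart at each step), and for (2) it simply notes that $[s^{\mr{pf},(r)}]\in L_Q$ via Proposition~\ref{prop_pf}(3), hence $s^{\mr{pf},(r)}\in\mca{A}'$, and concludes since $\bar{\mca{T}}_\kk\pf$ is built from these objects --- exactly what your case split via Lemmas~\ref{lem_23}/\ref{lem_24} unwinds.
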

\begin{proof}
The first claim follows Lemma \ref{prop_pf} (2). 
For (2), let $s^{\mr{pf},(r)}$ be the spherical object in $\mca{A}\pf$ defined in the same way as $s^{(r)}$ (see \S \ref{subsec_comp}).
By Lemma \ref{prop_pf} (3), we have $[s^{\mr{pf},(r)}]\in L_Q$. Hence we have $s^{\mr{pf},(r)}\in \mca{A}'$ and so $\bar{\mca{T}}_\kk\pf\subset \mca{A}'$. 
\end{proof}
We put 
\[
\bar{\mca{A}}_\kk\pf:=(\Phi\pf_{\kk})^{-1}(\mr{Mod}J_{\mu_\kk(Q\pf,W)})
\]
and 
\[
\bar{\mca{A}}'_\kk:= \bar{\mca{A}}_\kk\pf \cap \bar{\mca{D}}',\quad
{\mca{A}}'_\kk:= \bar{\mca{A}}_\kk\pf \cap {\mca{D}}'.
\]
Then $\bar{\mca{A}}'_\kk$ (resp. ${\mca{A}}'_\kk$) coincides with the full subcategory of $\bar{\mca{A}}_\kk\pf$ consisting of objects supported on $I$ (with finite dimensional cohomologies). 

We set 
$\bar{\mca{T}}'_\kk:=\bar{\mca{T}}\pf_\kk$ (resp. ${\mca{T}}'_\kk:=\bar{\mca{T}}\pf_\kk$) and 
\[
\bar{\mca{F}}'_\kk:=
\bar{\mca{F}}\pf_\kk\cap \bar{\mca{A}}',\quad 
\text{(resp. ${\mca{F}}'_\kk:=
\bar{\mca{F}}\pf_\kk\cap {\mca{A}}'$)}.
\]
Then, $(\bar{\mca{T}}'_\kk,\bar{\mca{F}}'_\kk)$ (resp. $({\mca{T}}'_\kk,{\mca{F}}'_\kk)$) gives a torsion pair of $\bar{\mca{A}}'$ (resp. ${\mca{A}}'$) and 
the tilted t-structure coincides with $\bar{\mca{A}}'_\kk$ (resp. ${\mca{A}}'_\kk$).

\subsection{CC formula for $(Q,W)$ and $(Q\pf,W)$}
We put
\[
R\pf_{\kk,i}:=H^{1}_{\bar{\A}\pf}((\Phi\pf_{\kk})^{-1}(P\pf_{\kk,i}))
\]
and
\[
\mathrm{Grass}\pf(\mathbf{k};i,\mathbf{v})
:=\{R\pf_{\kk,i}\twoheadrightarrow V\mid V\in \A',\ [V]=\mathbf{v}\}.
\]
Applying Theorem \ref{thm_CC} for $(Q\pf,W)$ we get
\[
\mathrm{FZ}\pf_{i,\mathbf{k}}(\underline{X})=
X_{\kk,i}\cdot \Biggl(\sum_\mathbf{v}e\Bigr(\mathrm{Grass}\pf(\mathbf{k};i,\mathbf{v})\Bigr)\cdot\mathbf{Y}^{-\mathbf{v}}\Biggr)
\]
where $Y_j=X_{j^*}^{-1}\cdot \prod_i(X_i)^{\bar{Q}(i,j)}$.
Then we have
\begin{equation}\label{eq_F}
F_{\kk,i}(\underline{y}):={FZ}\pf_{\kk,i}(\underline{X})|_{X_i=1,X_{i^*}=y_i}=\sum_\mathbf{v}e\Bigr(\mathrm{Grass}\pf(\mathbf{k};i,\mathbf{v})\Bigr)\cdot\mathbf{y}^{\mathbf{v}}.
\end{equation}

On the other hand, we put
\[
R'_{\kk,i}:=H^{1}_{\bar{\A}'}((\Phi\pf_{\kk})^{-1}(P_{\kk,i}))
\]
and
\[
\mathrm{Grass}'(\mathbf{k};i,\mathbf{v})
:=\{R'_{\kk,i}\twoheadrightarrow V\mid V\in \A',\ [V]=\mathbf{v}\}.
\]

We will apply the same arguments as in \S \ref{sec_proof} for $\mca{D}'\Gamma$.
Let $\mf{Hom}_{\mca{D}'\Gamma}(P_{\kk,i},\mca{T}_\kk)$ be the moduli stack which parametrizes homomorphisms in $\mca{D}'\Gamma$ from $P_{\kk,i}$ to elements in $\mca{T}\pf_\kk$. 
We can verify all the lemmas and propositions in \S \ref{subsec_idea} and \S \ref{subsec_auto} if we replace $\mathrm{Grass}(\mathbf{k};i,\mathbf{v})$ with $\mathrm{Grass}'(\mathbf{k};i,\mathbf{v})$.
As a consequence, we get the following modification of the Caldero-Chapoton type formula for $(Q,W)$ :
\begin{equation}\label{eq_mCC}
\mathrm{FZ}_{i,\mathbf{k}}(\underline{x})=
x_{\kk,i}\cdot \Biggl(\sum_\mathbf{v}e\Bigr(\mathrm{Grass}'(\mathbf{k};i,\mathbf{v})\Bigr)\cdot\mathbf{y}^{-\mathbf{v}}\Biggr).
\end{equation}
where $(\underline{y})^{-\mathbf{v}}=\prod_j (y_j)^{-v_j}$ and $y_j=\prod_i(x_i)^{\bar{Q}(i,j)}$.

\begin{prop}\label{prop_gr}
\[
R\pf_{\kk,i}=R'_{\kk,i}
\]
\end{prop}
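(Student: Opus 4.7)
The plan is to set up a morphism in $\bar{\mca{A}}\pf_\kk$ between the two projectives, pull it back along $(\Phi\pf_\kk)^{-1}$, and then read off the equality from the long exact sequence in $\bar{\mca{A}}\pf$-cohomology.

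First I would note that both $R\pf_{\kk,i}$ and $R'_{\kk,i}$ already lie in $\bar{\mca{T}}\pf_\kk$, which by Lemma \ref{lem_82}(2) is contained in $\mca{A}'$. Indeed, the tilting description $\bar{\mca{A}}\pf_\kk=\bar{\mca{A}}\pf{}^{(\bar{\mca{F}}\pf_\kk,\bar{\mca{T}}\pf_\kk[-1])}$ forces the first cohomology of the pullback of any object of $\bar{\mca{A}}\pf_\kk$ to land in $\bar{\mca{T}}\pf_\kk$. This applies both to $P\pf_{\kk,i}$ and to $P_{\kk,i}$, the latter being viewed as an object of the Serre subcategory $\bar{\mca{A}}'_\kk\subset\bar{\mca{A}}\pf_\kk$ of $I$-supported modules (via the quotient $J\pf_\kk\twoheadrightarrow J_\kk$ killing the frozen idempotents).

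Next I would construct the natural surjection $P\pf_{\kk,i}\twoheadrightarrow P_{\kk,i}$ in $\bar{\mca{A}}\pf_\kk$ coming from this quotient, so that $P_{\kk,i}$ is identified with the largest quotient of $P\pf_{\kk,i}$ supported on $I$. Let $K$ denote its kernel. Applying $(\Phi\pf_\kk)^{-1}$ to the short exact sequence yields a triangle
\[
(\Phi\pf_\kk)^{-1}(K)\too (\Phi\pf_\kk)^{-1}(P\pf_{\kk,i})\too (\Phi\pf_\kk)^{-1}(P_{\kk,i})\too (\Phi\pf_\kk)^{-1}(K)[1].
\]
By Lemma \ref{lem_82}(1), $(\Phi\pf_\kk)^{-1}(s_{\kk,i^*})=s_{i^*}$, so $(\Phi\pf_\kk)^{-1}$ preserves the Serre subcategory generated by the frozen simples; I would use this to argue that the cohomologies of $(\Phi\pf_\kk)^{-1}(K)$ are supported on $I^*$. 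Taking the long exact sequence in $\bar{\mca{A}}\pf$-cohomology, the terms $H^1((\Phi\pf_\kk)^{-1}(K))$ and $H^2((\Phi\pf_\kk)^{-1}(K))$ are supported on $I^*$ while the terms sandwiched between them, $R\pf_{\kk,i}$ and $R'_{\kk,i}$, are supported on $I$. Since $\mathrm{Hom}$ in $\bar{\mca{A}}\pf$ from an $I$-supported module to an $I^*$-supported module (or vice versa) vanishes, both the kernel and cokernel of the induced map $R\pf_{\kk,i}\to R'_{\kk,i}$ must be zero, giving the desired isomorphism.

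The main obstacle lies in the preceding step: making precise the claim that the cohomology of $(\Phi\pf_\kk)^{-1}(K)$ is concentrated on $I^*$. After mutations, $K$ itself may well have composition factors at vertices of $I$, because paths in $\mu_\kk Q\pf$ starting at $i$ can enter $I^*$ and return to $I$ via the new arrows $k^*\to j$ created by mutation at $k$. The rescuing point should be that such ``return'' contributions end up in the torsion-free part $\bar{\mca{F}}\pf_\kk$ after pullback, so that they do not pollute the $H^1$ which lives in $\bar{\mca{T}}\pf_\kk$. Establishing this cleanly will require a careful induction on the length of $\kk$, using at each step the preservation of the $I^*$-Serre subcategory by $\Phi\pf_{k,\pm}$ (as recorded in Proposition \ref{prop_pf}) together with the tilting description of the mutation equivalences from \S\ref{sec_}.
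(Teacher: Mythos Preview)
Your long-exact-sequence strategy founders precisely at the point you flagged, but the problem is worse than you suggest: the claim that $H^1_{\bar{\mca{A}}\pf}\bigl((\Phi\pf_\kk)^{-1}(K)\bigr)$ is supported on $I^*$ is simply false. Since $K\in\bar{\mca{A}}\pf_\kk$, the object $(\Phi\pf_\kk)^{-1}(K)$ lies in $\bar{\mca{A}}\pf_\kk=(\bar{\mca{A}}\pf)^{(\bar{\mca{F}}\pf_\kk,\bar{\mca{T}}\pf_\kk[-1])}$, and therefore its $H^1_{\bar{\mca{A}}\pf}$ lands in $\bar{\mca{T}}\pf_\kk$, which by the very Lemma~\ref{lem_82}(2) you invoke is contained in $\mca{A}'$. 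So $H^1$ is $I$-supported, just like $R\pf_{\kk,i}$ and $R'_{\kk,i}$, and your Hom-vanishing argument for injectivity evaporates. (Incidentally $H^2$ vanishes for the same cohomological-amplitude reason, so surjectivity of $R\pf_{\kk,i}\to R'_{\kk,i}$ is free; it is only injectivity that is at stake.) The proposed induction on the length of $\kk$ does not help, because the obstruction is not that $K$ acquires $I$-supported composition factors in $\bar{\mca{A}}\pf_\kk$, but that its $H^1$ in $\bar{\mca{A}}\pf$ is \emph{always} $I$-supported.

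The paper's argument sidesteps this entirely and is much shorter. It works in $\bar{\mca{A}}\pf_\kk$ with the torsion pair $(\bar{\mca{F}}\pf_\kk,\bar{\mca{T}}\pf_\kk[-1])$ and constructs mutually inverse surjections. In one direction, since $R\pf_{\kk,i}\in\mca{A}'$ the quotient $P\pf_{\kk,i}\twoheadrightarrow R\pf_{\kk,i}[-1]$ factors through the maximal $I$-supported quotient $P_{\kk,i}$, and then through its torsion-free quotient $R'_{\kk,i}[-1]$; this yields $R'_{\kk,i}\twoheadrightarrow R\pf_{\kk,i}$. In the other direction, the kernel of $P\pf_{\kk,i}\twoheadrightarrow R\pf_{\kk,i}[-1]$ is $H^0\in\bar{\mca{F}}\pf_\kk$, which admits no nonzero map to $R'_{\kk,i}[-1]\in\bar{\mca{T}}\pf_\kk[-1]$, so $P\pf_{\kk,i}\twoheadrightarrow P_{\kk,i}\twoheadrightarrow R'_{\kk,i}[-1]$ factors through $R\pf_{\kk,i}[-1]$. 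Notice that if you try to salvage your approach by showing directly that the connecting map $H^1\bigl((\Phi\pf_\kk)^{-1}(K)\bigr)\to R\pf_{\kk,i}$ is zero, the natural argument is that the composite $K\hookrightarrow P\pf_{\kk,i}\twoheadrightarrow R\pf_{\kk,i}[-1]$ vanishes because the second map factors through $P_{\kk,i}=\coker(K\hookrightarrow P\pf_{\kk,i})$; but that is exactly the paper's first step.
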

\begin{proof}
Since we have no non-trivial morphism from 
\[
\mr{ker}\left( P\pf_{\kk,i}\twoheadrightarrow R\pf_{\kk,i}  \right)
\]
to $R'_{\kk,i}$, the composition 
\[
P\pf_{\kk,i}\twoheadrightarrow P_{\kk,i}\twoheadrightarrow R'_{\kk,i}
\]
factors through $P\pf_{\kk,i}\twoheadrightarrow R\pf_{\kk,i}$ :
\[
\xymatrix{
P\pf_{\kk,i}\ar@{->>}[r]\ar@{->>}[d] & R\pf_{\kk,i}\ar@{-->>}[d]\\
P_{\kk,i}\ar@{->>}[r] & R'_{\kk,i}.
}
\]

On the other hand, since $R\pf_{\kk,i}$ is supported on $I$ the surjection $P_{\kk,i}\pf\twoheadrightarrow R\pf_{\kk,i}$ factors through $P_{\kk,i}$. 
By the same reason, this map factors through $P_{\kk,i}\twoheadrightarrow R'_{\kk,i}$.
\[
\xymatrix{
P\pf_{\kk,i}\ar@{->>}[r]\ar@{->>}[d] & R\pf_{\kk,i}\\
P_{\kk,i}\ar@{->>}[r]\ar@{-->>}[ru] & R'_{\kk,i}\ar@{-->>}[u]
}.
\]
These two morphisms are the inverse of each other.  
\end{proof}

\subsection{$F$-polynomials and $g$-vectors}\label{subsec_F_and_g}
Combining \eqref{eq_F}, \eqref{eq_mCC} and Proposition \ref{prop_gr}, we get
\begin{equation}\label{eq_Fg}
\mathrm{FZ}_{i,\mathbf{k}}(\underline{x})=
x_{\kk,i}\cdot F_{\kk,i}(\underline{y}^{-1}).
\end{equation}
Finally, we have the following description of $F$-polynomials and $g$-vectors :
\begin{thm}\label{thm_Fg}
\begin{itemize}
\item[\textup{(1)}]
\[
F_{\kk,i}(\underline{y})=
\sum_\mathbf{v}e\Bigr(\mathrm{Grass}'(\mathbf{k};i,\mathbf{v})\Bigr)\cdot\mathbf{y}^{\mathbf{v}}
\]
\item[\textup{(2)}]
\[
g_{\kk,i} = \phi_{\kk}^{-1}([\Gamma_{\kk,i}])\in M_Q.\label{eq_g}
\]
\end{itemize}
\end{thm}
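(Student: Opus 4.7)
The plan is to deduce Theorem \ref{thm_Fg} by assembling the two Caldero-Chapoton type formulas already established for $(Q,W)$ and $(Q\pf,W)$, together with Proposition \ref{prop_gr}, in a straightforward way.

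For part (1), I would compare the two quiver Grassmannians appearing in the statement. Both $\mathrm{Grass}\pf(\mathbf{k};i,\mathbf{v})$ and $\mathrm{Grass}'(\mathbf{k};i,\mathbf{v})$ parametrize surjections onto objects $V\in\mca{A}'$ of class $\mathbf{v}$, and they differ only in the source module ($R\pf_{\kk,i}$ versus $R'_{\kk,i}$). By Proposition \ref{prop_gr} these two source modules coincide, so the moduli stacks are canonically isomorphic and in particular have equal Euler characteristics. Substituting this identification into \eqref{eq_F} (which was obtained by applying Theorem \ref{thm_CC} to $(Q\pf,W)$) yields part (1) immediately.

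For part (2), combining the modified Caldero-Chapoton formula \eqref{eq_mCC} for $(Q,W)$ with part (1) gives the identity \eqref{eq_Fg}:
\[
\mathrm{FZ}_{\kk,i}(\underline{x}) = x_{\kk,i}\cdot F_{\kk,i}(\underline{y}^{-1}).
\]
I would then compare this with the defining equation of the $g$-vector,
\[
\mathrm{FZ}_{\kk,i}(\underline{x}) = \mathbf{x}^{g_{\kk,i}}\cdot F_{\kk,i}(\underline{y}^{-1}).
\]
Since $F_{\kk,i}(\underline{y})$ has constant term $e(\mathrm{Grass}'(\mathbf{k};i,0)) = 1$ (corresponding to the unique surjection $R'_{\kk,i}\twoheadrightarrow 0$), the factorization of $\mathrm{FZ}_{\kk,i}(\underline{x})$ as a Laurent monomial in $\underline{x}$ times a Laurent polynomial in $\underline{y}^{-1}$ with constant term $1$ is unique, so $\mathbf{x}^{g_{\kk,i}} = x_{\kk,i}$. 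Under the identification of the tori $\T_Q \simeq \T_{Q_\kk}$ induced by $\phi_\kk$ in \S \ref{subsub_derived_equiv}, the generator $x_{\kk,i}$ is precisely the monomial $\mathbf{x}^{\phi_\kk^{-1}([\Gamma_{\kk,i}])}$, and hence $g_{\kk,i} = \phi_\kk^{-1}([\Gamma_{\kk,i}])$.

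The argument is essentially an assembly step, so there is no serious obstacle remaining at this final stage; all the substantive work has been carried out in Proposition \ref{prop_gr} (identifying the two source modules) and in the parallel development of \S \ref{subsec_auto} applied to $\mca{A}'$ (yielding \eqref{eq_mCC}). The only minor points requiring attention are the normalization of the constant term of $F_{\kk,i}$ and the fact that, under the chosen torus identification $\phi_\kk$, no sign or ordering ambiguity obstructs the equality $x_{\kk,i} = \mathbf{x}^{\phi_\kk^{-1}([\Gamma_{\kk,i}])}$; both follow directly from the definitions in \S \ref{subsub_torus}.
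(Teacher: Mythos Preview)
Your proposal is correct and follows essentially the same route as the paper: the paper derives \eqref{eq_Fg} by combining \eqref{eq_F}, \eqref{eq_mCC} and Proposition \ref{prop_gr}, and then reads off both parts of Theorem \ref{thm_Fg} from that identity together with the definition of the $g$-vector. Your additional remarks on the constant term of $F_{\kk,i}$ and the identification $x_{\kk,i}=\mathbf{x}^{\phi_\kk^{-1}([\Gamma_{\kk,i}])}$ simply make explicit what the paper leaves implicit.
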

Since $L_{J}\otimes \R$ and $M_{J}\otimes \R$ are dual to each other via $\chi$ and $\phi_\kk$ preserves $\chi$, we get the following description of the $\teng$-vector. 
\begin{cor}\label{cor_619}
\[
\teng_{\kk,i} = \phi_{\kk}([s_i]) \in L_{Q_\kk}.
\]
\end{cor}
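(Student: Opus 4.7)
The plan is to deduce Corollary \ref{cor_619} from Theorem \ref{thm_Fg}(2) by a short duality argument. The two structural inputs are: (a) the bases $\{\mathbf{w}_j=[\Gamma_j]\}$ of $M_Q$ and $\{\mathbf{v}_i=[s_i]\}$ of $L_Q$ are dual under the Euler pairing $\chi$ (by the normalization in \S \ref{subsec_Gro}, giving the identification $M_{Q,\R}\simeq L_{Q,\R}^{\ast}$), and (b) the isomorphisms $\phi_\kk\colon M_Q\to M_{Q_\kk}$ and $\phi_\kk\colon L_Q\to L_{Q_\kk}$ preserve $\chi$, as was already used in \S \ref{subsub_derived_equiv} to derive \eqref{eq_phi_for_s} from \eqref{eq_yy'}.

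Concretely, I would expand $g_{\kk,i}=\sum_j g_{\kk,i}(j)\,\mathbf{w}_j$ in the $\{\mathbf{w}_j\}$-basis, so that by Theorem \ref{thm_Fg}(2) the coefficient is
\[
g_{\kk,i}(j)=\chi\bigl(\phi_\kk^{-1}[\Gamma_{\kk,i}],[s_j]\bigr)=\chi\bigl([\Gamma_{\kk,i}],\phi_\kk[s_j]\bigr),
\]
the second equality being (b). Expanding $\phi_\kk[s_j]=\sum_l A_{lj}[s_{\kk,l}]$ in $L_{Q_\kk}$ and using the duality $\chi([\Gamma_{\kk,i}],[s_{\kk,l}])=\delta_{il}$, we get $g_{\kk,i}(j)=A_{ij}$. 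Thus the matrix whose columns are the $g$-vectors $g_{\kk,i}$ coincides with the transpose of the matrix $A$ whose $i$-th column is $\phi_\kk([s_i])$ written in $\{[s_{\kk,l}]\}$-coordinates. Since the Fomin--Zelevinsky $\teng$-vector is by definition the $i$-th column of the transposed $g$-matrix, this identifies $\teng_{\kk,i}=\phi_\kk([s_i])\in L_{Q_\kk}$, as desired.

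There is no substantive obstacle: the content of the corollary is precisely that the two isomorphisms induced by $\phi_\kk$ on $M_Q$ and on $L_Q$ are transposes of each other under the $\chi$-duality, which is immediate from (a) and (b). The only care required is to align the Fomin--Zelevinsky convention for $\teng$ with the basis conventions of \S \ref{subsec_Gro}; once the identification $M_{Q,\R}\simeq L_{Q,\R}^{\ast}$ is fixed, no signs or inverses intervene, and the argument reduces to the single slide $\chi(\phi_\kk^{-1}a,b)=\chi(a,\phi_\kk b)$.
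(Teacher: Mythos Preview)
Your proposal is correct and follows essentially the same approach as the paper: the paper's proof consists of the single sentence ``Since $L_{J}\otimes \R$ and $M_{J}\otimes \R$ are dual to each other via $\chi$ and $\phi_\kk$ preserves $\chi$, we get the following description of the $\teng$-vector,'' and you have simply unpacked this duality-plus-invariance argument into an explicit matrix computation.
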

\begin{rem}
The $g$-vector can be viewed as a tropical counterpart of the $x$-variable, while the $c$-vector can be viewed as a tropical counterpart of the $y$-variable. 
The duality between the $g$- and the $c$-vectors is called toropical duality in \cite{nakanishi-zelevinsky} \footnote{The duality has proved in \cite{nakanishi-periodicity} for skewsymmetric matrices. For skewsymmetrizable matrices, it is still a conjecture.}. 
From our view point, the $x$-variable corresponds to the ``projective" $\Gamma_i$ and the $y$-variable corresponds to the simple $s_i$, and the toropical duality is a consequence of the duality between $\{\Gamma_i\}$ and $\{s_i\}$, 
\end{rem}

\subsubsection{Conjectures on $F$-polynomials}
The following claims follow directly from the description in Theorem \ref{thm_Fg}.
\begin{thm}[\protect{\cite[Conjecture 5.4]{fomin-zelevinsky4},\cite[Theorem 1.7]{DWZ2}}]
Each polynomial $F_{\kk,i}(\underline{y})$ has constant term $1$.
\end{thm}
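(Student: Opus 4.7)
The plan is very short because the hard work has already been done in Theorem \ref{thm_Fg}. By Theorem \ref{thm_Fg}(1), we have
\[
F_{\kk,i}(\underline{y})=\sum_{\mathbf{v}}e\bigl(\mathrm{Grass}'(\mathbf{k};i,\mathbf{v})\bigr)\cdot\mathbf{y}^{\mathbf{v}},
\]
so the constant term equals $e\bigl(\mathrm{Grass}'(\mathbf{k};i,\mathbf{0})\bigr)$. Thus everything reduces to identifying this single quiver Grassmannian with a point.

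First I would unfold the definition: $\mathrm{Grass}'(\mathbf{k};i,\mathbf{0})$ parametrizes surjections $R'_{\kk,i}\twoheadrightarrow V$ with $V\in\mca{A}'$ and $[V]=\mathbf{0}$. Since the dimension vector map $K_0(\mca{A}')\to L$ is injective on effective classes, the only $V\in\mca{A}'$ with $[V]=\mathbf{0}$ is $V=0$. For this $V$ there is a unique surjection from $R'_{\kk,i}$ (the zero morphism), so $\mathrm{Grass}'(\mathbf{k};i,\mathbf{0})$ is a single reduced point.

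Therefore $e\bigl(\mathrm{Grass}'(\mathbf{k};i,\mathbf{0})\bigr)=1$, and the constant term of $F_{\kk,i}(\underline{y})$ is $1$. There is no serious obstacle here; the only thing to keep track of is that the sum in Theorem \ref{thm_Fg}(1) is indexed by effective classes $\mathbf{v}\in L$, which is immediate because $\mathrm{Grass}'(\mathbf{k};i,\mathbf{v})$ is empty whenever $\mathbf{v}$ is not the class of some object of $\mca{A}'$.
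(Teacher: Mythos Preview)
Your argument is correct and is exactly the unpacking of what the paper means by ``follows directly from the description in Theorem \ref{thm_Fg}'': the constant term is $e\bigl(\mathrm{Grass}'(\mathbf{k};i,\mathbf{0})\bigr)$, and since the only object of $\mca{A}'$ with dimension vector $\mathbf{0}$ is the zero module, this Grassmannian is a single reduced point. The paper gives no further detail, so there is nothing to compare.
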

\begin{thm}[\protect{\cite[Conjecture 5.5]{fomin-zelevinsky4},\cite[Theorem 1.7]{DWZ2}}]
Each polynomial $F_{\kk,i}(\underline{y})$ has a unique monomial of maximal degree. 
Furthermore, this monomial has coefficient $1$, and it is divisible by all the
other occurring monomials.
\end{thm}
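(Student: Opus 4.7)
The plan is to read off the statement directly from the geometric description of $F_{\kk,i}(\underline{y})$ provided by Theorem \ref{thm_Fg}(1), namely
\[
F_{\kk,i}(\underline{y}) = \sum_{\mathbf{v}} e\bigl(\mathrm{Grass}'(\mathbf{k};i,\mathbf{v})\bigr)\cdot \mathbf{y}^{\mathbf{v}}.
\]
The candidate ``top'' monomial will be $\mathbf{y}^{[R'_{\kk,i}]}$, and I will argue both that its coefficient is $1$ and that every other monomial appearing in $F_{\kk,i}$ divides it.

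First, I would establish a componentwise upper bound on the exponents. For any surjection $R'_{\kk,i}\twoheadrightarrow V$ in $\A'$, the dimension vector of the quotient satisfies $[V]\leq [R'_{\kk,i}]$ coordinate-wise, because the kernel has nonnegative dimension at each vertex and the Euler characteristic map $[\,\cdot\,]$ is additive on short exact sequences in $\A'$. Consequently $\mathrm{Grass}'(\mathbf{k};i,\mathbf{v})$ is empty unless $\mathbf{v}\leq [R'_{\kk,i}]$, so every monomial occurring in $F_{\kk,i}(\underline{y})$ divides $\mathbf{y}^{[R'_{\kk,i}]}$.

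Next, I would verify that $\mathbf{y}^{[R'_{\kk,i}]}$ actually occurs with coefficient $1$. The moduli space $\mathrm{Grass}'(\mathbf{k};i,[R'_{\kk,i}])$ parametrizes surjections $R'_{\kk,i}\twoheadrightarrow V$ with $[V]=[R'_{\kk,i}]$; but such a surjection must have trivial kernel (its kernel has zero dimension vector, hence is the zero object in $\A'$), so $V\cong R'_{\kk,i}$ and the surjection is the identity up to automorphism. Hence $\mathrm{Grass}'(\mathbf{k};i,[R'_{\kk,i}])$ is reduced to a single point, contributing $e(\mathrm{Grass}'(\mathbf{k};i,[R'_{\kk,i}]))=1$. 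Combining the two steps, $\mathbf{y}^{[R'_{\kk,i}]}$ is the unique maximal (with respect to componentwise divisibility) monomial appearing in $F_{\kk,i}(\underline{y})$, its coefficient is $1$, and it is divisible by every other monomial appearing.

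The only mild subtlety, which I do not expect to be a real obstacle, is making sure the identification ``kernel of dimension vector zero is zero'' is made inside the correct abelian category $\A'$ (finite-dimensional $J_{Q\pf,W}$-modules supported on $I$); this is immediate from the definition of $\A'$ in \S \ref{subsec_F_and_g}. Everything else is bookkeeping on the formula of Theorem \ref{thm_Fg}(1).
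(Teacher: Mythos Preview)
Your argument is correct and is precisely the intended one: the paper simply asserts that this theorem follows directly from the description in Theorem \ref{thm_Fg}, and you have spelled out that direct deduction. The top monomial $\mathbf{y}^{[R'_{\kk,i}]}$ comes from the identity quotient, and the componentwise bound on dimension vectors of quotients gives both uniqueness of the maximal degree and the divisibility claim.
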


\subsubsection{Conjectures on $g$-vectors}
\begin{thm}[\protect{\cite[Conjecture 7.10(2)]{fomin-zelevinsky4},\cite[Theorem 1.7]{DWZ2}}]
For any sequence $\kk$, the vectors $\{g_{\kk,i}\}_{i\in I}$ form a $\Z$-basis of the lattice $\Z^n$.
\end{thm}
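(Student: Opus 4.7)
The plan is to deduce the theorem directly from the categorical description of the $g$-vector provided in Theorem \ref{thm_Fg}(2), namely $g_{\kk,i}=\phi_\kk^{-1}([\Gamma_{\kk,i}])\in M_Q$. Since the conclusion we want is a purely lattice-theoretic statement about $M_Q\simeq\Z^n$, it suffices to show that the collection $\{\phi_\kk^{-1}([\Gamma_{\kk,i}])\}_{i\in I}$ is a $\Z$-basis of $M_Q$.

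First I would recall from \S\ref{subsec_Gro} that $M_Q=K_0(\pera)$ has the canonical basis $\{\mb{w}_i=[\Gamma_i]\}_{i\in I}$, and the same construction applied to the mutated QP $\mu_\kk(Q,W)$ produces the basis $\{[\Gamma_{\kk,i}]\}_{i\in I}$ of $M_{Q_\kk}=K_0(\mr{per}\Gamma_{\mu_\kk(Q,W)})$, because the direct summands of the dg-algebra $\Gamma_{\mu_\kk(Q,W)}$ are precisely these $\Gamma_{\kk,i}$ and they generate $\mr{per}$ freely on $K_0$. Next I would invoke Theorem \ref{thm_KY} (together with the iterated construction of $\Phi_\kk$ in \S\ref{subsec_comp}), which guarantees that $\Phi_\kk$ is an equivalence of triangulated categories restricting to an equivalence $\pera\overset{\sim}{\to}\mr{per}\Gamma_{\mu_\kk(Q,W)}$. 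This immediately implies that the induced map
\[
\phi_\kk\colon M_Q\overset{\sim}{\longrightarrow}M_{Q_\kk}
\]
is an isomorphism of lattices, and hence so is its inverse.

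Combining the two steps, $\phi_\kk^{-1}$ carries the basis $\{[\Gamma_{\kk,i}]\}$ of $M_{Q_\kk}$ to a basis $\{\phi_\kk^{-1}([\Gamma_{\kk,i}])\}=\{g_{\kk,i}\}$ of $M_Q\simeq\Z^n$, which is the desired statement. There is essentially no obstacle here: all the genuine content has already been packaged into Theorem \ref{thm_Fg}(2); the present theorem is a formal corollary, once one records that a derived equivalence preserving the perfect subcategory induces a $K_0$-isomorphism and that the classes of the indecomposable summands of the Ginzburg dga form a distinguished basis of $K_0(\mr{per})$. The only point worth mentioning in the write-up is that the hypothesis on successive f-mutatability of $(Q\pf,W)$ is exactly what is needed to ensure Theorem \ref{thm_Fg} applies and thus to legitimize the identification $g_{\kk,i}=\phi_\kk^{-1}([\Gamma_{\kk,i}])$ in the first place.
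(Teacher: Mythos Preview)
Your proposal is correct and follows exactly the paper's approach: the paper's proof consists of the single sentence ``This is clear from Theorem \ref{thm_Fg} (2),'' and you have simply unpacked what ``clear'' means, namely that $\phi_\kk$ is a lattice isomorphism sending the basis $\{[\Gamma_{\kk,i}]\}$ of $M_{Q_\kk}$ to the basis $\{g_{\kk,i}\}$ of $M_Q$.
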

\begin{proof}
This is clear from Theorem \ref{thm_Fg} (2).
\end{proof}
\begin{thm}[\protect{\cite[Conjecture 6.13]{fomin-zelevinsky4},\cite[Theorem 1.7]{DWZ2}}]
For any sequence $\kk$ and a vertex $i\in I$, the components of the vector $\teng_{\kk,i}$ are either all non-negative, or all non-positive.
\end{thm}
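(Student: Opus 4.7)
The plan is to deduce the sign coherence of $\teng_{\kk,i}$ directly from Corollary \ref{cor_619} together with the torsion pair provided by Theorem \ref{thm_tilting}. By Corollary \ref{cor_619}, $\teng_{\kk,i}=\phi_\kk([s_i])\in L_{Q_\kk}$, so the task reduces to showing that when $[s_i]$ is transported to $L_{Q_\kk}$ through the derived equivalence $\Phi_\kk$, its coordinates in the basis $\{[s_{\kk,j}]\}_{j\in I}$ have a uniform sign.

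The key input is the following elementary observation. If $(\mca{T},\mca{F})$ is a torsion pair of an abelian category and $s$ is simple, then the canonical sequence $0\to t(s)\to s\to s/t(s)\to 0$ forces the torsion subobject $t(s)$ to be either $0$ or $s$, so $s$ lies entirely in $\mca{T}$ or entirely in $\mca{F}$. I would apply this to the torsion pair $(\bar{\mca{T}}_\kk,\bar{\mca{F}}_\kk)$ of $\bar{\mca{A}}=\mr{Mod}\,J_{Q,W}$ produced by Theorem \ref{thm_tilting}, observing that each $s_i$ is simple in $\bar{\mca{A}}$.

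In the case $s_i\in\bar{\mca{F}}_\kk$, the object $s_i$ already lies in the tilted heart $\bar{\mca{A}}_\kk=\Phi_\kk^{-1}(\mr{Mod}\,J_{\mu_\kk(Q,W)})$ in cohomological degree $0$, so $\phi_\kk([s_i])=[\Phi_\kk(s_i)]$ is literally the dimension vector of a $J_{\mu_\kk(Q,W)}$-module and hence has all non-negative components. In the opposite case $s_i\in\bar{\mca{T}}_\kk$, the shift $s_i[-1]$ lies in $\bar{\mca{A}}_\kk$, and so $-\phi_\kk([s_i])=\phi_\kk([s_i[-1]])$ has non-negative components; this gives $\teng_{\kk,i}$ non-positive components.

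Since the substantial categorical work establishing $(\bar{\mca{T}}_\kk,\bar{\mca{F}}_\kk)$ has already been done in Theorem \ref{thm_tilting}, no real obstacle remains---the argument is a formal consequence of that structural theorem and the simplicity of $s_i$. The only small subtlety is the distinction between $\bar{\mca{A}}=\mr{Mod}\,J_{Q,W}$ and $\mca{A}=\mr{mod}\,J_{Q,W}$, but $s_i$ is simple in both, and the interpretation of $\phi_\kk$ as the dimension vector in $L_{Q_\kk}$ is unaffected when one restricts to the finite-dimensional case containing $s_i$ and $\Phi_\kk(s_i)$ (or $\Phi_\kk(s_i[-1])$).
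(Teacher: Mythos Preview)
Your proof is correct and is essentially the paper's own argument. The paper simply says ``In the same way as Theorem \ref{thm_tilting}, we can see that $\Phi_\kk(s_i)\in \mca{A}_\kk$ or $\Phi_\kk(s_i)\in \mca{A}_\kk[1]$'' and then invokes Corollary \ref{cor_619}; your version spells out explicitly why the simple $s_i$ must lie entirely in $\bar{\mca{T}}_\kk$ or $\bar{\mca{F}}_\kk$, which is exactly the ``same way'' step the paper alludes to.
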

\begin{proof}
In the same way as Theorem \ref{thm_tilting}, we can see that $\Phi_\kk(s_i)\in \mca{A}_\kk$ or $\Phi_\kk(s_i)\in \mca{A}_\kk[1]$. 
Then, the claim is a consequence Corollary \ref{cor_619}.
\end{proof}
\begin{thm}[\protect{\cite[Conjecture 7.12]{fomin-zelevinsky4},\cite[Theorem 1.7]{DWZ2}}]
For a sequence $\kk=(k_0,\ldots,k_l)$, we take a new sequence 
\[
\kk^\circ:=(k_1,\ldots,k_l).
\]
Then we have
\[
\teng_{\kk,i^\circ}=\begin{cases}
-\teng_{i,{\kk}} & i=k_0, \\
\teng_{i,{\kk}} + Q(i,k_0)\cdot \teng_{k_0,{\kk}} & i\neq k_0,\ \ep(0)=-,\\
\teng_{i,{\kk}} + Q(k_0,i)\cdot \teng_{k_0,{\kk}} & i\neq k_0,\ \ep(0)=+.
\end{cases}
\]
\end{thm}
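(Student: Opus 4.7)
The plan is to derive the three cases as a direct consequence of Corollary \ref{cor_619} combined with the explicit mutation formula \eqref{eq_phi_for_s}. Since Corollary \ref{cor_619} identifies $\teng_{\kk, i} = \phi_\kk([s_i]) \in L_{Q_\kk}$, the claim becomes a lattice-theoretic identity comparing $\phi_\kk$ with $\phi_{\kk^\circ}$, where I read $\kk^\circ = (k_1, \ldots, k_l)$ as the mutation sequence applied to the QP $(Q', W') := \mu_{k_0}(Q, W)$. Under this reading, both $\kk$ and $\kk^\circ$ produce the same terminal quiver $Q_\kk$.

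The first step is to establish the factorisation $\phi_\kk = \phi_{\kk^\circ} \circ \phi_{k_0, \ep(0)}$ as isomorphisms $L_Q \to L_{Q_\kk}$. This follows from the inductive construction of the sign sequence in Theorem \ref{thm_tilting}: the signs $\ep(1), \ldots, \ep(l)$ attached to $\kk^\circ$ from the starting QP $(Q', W')$ coincide with the corresponding signs for $\kk$ from $(Q, W)$, because the sign-coherence criterion in the inductive step depends only on the current heart and its simple objects.

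The second step is to apply \eqref{eq_phi_for_s} in the forward direction. Inverting the formulas given there yields, writing $[s'_j]$ for the simple classes of $(Q', W')$,
\[
\phi_{k_0, \ep(0)}([s_{k_0}]) = -[s'_{k_0}], \qquad \phi_{k_0, +}([s_i]) = [s'_i] + Q(k_0, i)\,[s'_{k_0}], \qquad \phi_{k_0, -}([s_i]) = [s'_i] + Q(i, k_0)\,[s'_{k_0}]
\]
for $i \neq k_0$. Applying $\phi_{\kk^\circ}$ to these identities and using $\phi_{\kk^\circ}([s'_j]) = \teng_{\kk^\circ, j}$ gives, for $i = k_0$, the relation $\teng_{\kk, k_0} = -\teng_{\kk^\circ, k_0}$; and for $i \neq k_0$, it gives $\teng_{\kk, i} = \teng_{\kk^\circ, i} + Q(k_0, i)\,\teng_{\kk^\circ, k_0}$ when $\ep(0) = +$, respectively $\teng_{\kk, i} = \teng_{\kk^\circ, i} + Q(i, k_0)\,\teng_{\kk^\circ, k_0}$ when $\ep(0) = -$.

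The final step is to solve for the $\teng_{\kk^\circ}$-vectors: substituting $\teng_{\kk^\circ, k_0} = -\teng_{\kk, k_0}$ into the preceding equations yields precisely the three cases of the statement. The main obstacle is notational rather than conceptual—namely, correctly identifying $\kk^\circ$ as the sequence applied to $(Q', W')$, and matching the sign $\ep(0)$ across the two decompositions—but once Corollary \ref{cor_619} is in hand, no further input from the motivic Hall algebra machinery of \S \ref{sec_proof} is required.
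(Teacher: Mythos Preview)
Your proposal is correct and follows essentially the same approach as the paper, which simply states that the result is a consequence of \eqref{eq_phi_for_s} and Corollary \ref{cor_619}. You have supplied the details the paper omits---namely the factorisation $\phi_\kk = \phi_{\kk^\circ}\circ\phi_{k_0,\ep(0)}$, the inversion of \eqref{eq_phi_for_s}, and the back-substitution---but the underlying argument is identical.
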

\begin{proof}
This is a consequence of \eqref{eq_phi_for_s} and Corollary \ref{cor_619}.
\end{proof}

\subsection{$g$-vectors determine $F$-polynomials}\label{subsec_g_to_F}
We define
\[
\zeta' \colon M_\mathbb{R} \to \mathrm{Stab}(\mca{D}'\Gamma).
\]
in the same way as \S \ref{subsec_embed}.
For $\theta\in M_\mathbb{R}$, let $\mca{A}'_\theta$ denote the
core of the t-structure corresponding to $\zeta'(\theta)$ and $(\mca{T}'_\theta,\mca{F}'_\theta)$ be the corresponding torsion pair.
For $\theta\in C_{\A_\kk}^*$, we have $\mathcal{A}_{\zeta(\theta)}=\mathcal{A}'_{\kk}$.

\begin{thm}[\protect{\cite[Conjecture 7.10(1)]{fomin-zelevinsky4},\cite[Theorem 1.7]{DWZ2}}]
Suppose we have
\[
\sum_{i\in J}a_i\cdot g_{\kk,i}=
\sum_{i\in J'}a'_i\cdot g_{\kk,i'}
\]
for some nonempty subsets $J,J'\subset I$ and some positive real numbers $a_i$ and $a'_i$. 
Then there is a bijection $\kappa\colon J\to J'$ such that for every $i\in J$ we have
\[
a_i=a'_{\kappa(i)},\quad 
g_{\kk,i}=g_{\kappa(i),\kk'},\quad 
F_{\kk,i}=F_{\kappa(i),\kk'}.
\]
In particular, $F_{\kk,i}$ is determined by $g_{\kk,i}$.
\end{thm}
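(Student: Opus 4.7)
The plan is to interpret the hypothesis as a geometric incidence condition on the cones spanned by the $g$-vectors, and then to use the Bridgeland stability condition at the common wall to match $F$-polynomials. Set $\theta := \sum_{i \in J} a_{i}\, g_{\kk,i} = \sum_{i \in J'} a'_{i}\, g_{\kk',i} \in M_\R$.

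By Theorem~\ref{thm_Fg}(2), $g_{\kk,i}=\phi_{\kk}^{-1}([\Gamma_{\kk,i}])$, so the $g$-vectors are $\chi$-dual to the classes $\phi_{\kk}^{-1}([s_{\kk,j}])$ of the simples; they span $\overline{C^{*}_{\A'_{\kk}}} \subset M_\R$, and since $\{ g_{\kk,i} \}_{i\in I}$ is a $\Z$-basis of $M_Q$ this cone is simplicial with extremal rays $\R_{\geq 0}\cdot g_{\kk,i}$. Its faces are indexed by subsets $J_{0}\subset I$ via $F^{\kk}_{J_0} := \R_{\geq 0}\cdot \{ g_{\kk,i} : i\in J_0 \}$, and positivity of the $a_i$ places $\theta$ in the relative interior of $F^{\kk}_{J}$; symmetrically $\theta$ lies in the relative interior of $F^{\kk'}_{J'}$. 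Since the minimal face of a simplicial cone containing a given point is unique, $F^{\kk}_{J}=F^{\kk'}_{J'}$. Matching extremal rays produces a bijection $\kappa\colon J \to J'$ with $g_{\kk,i}=g_{\kk',\kappa(i)}$, and uniqueness of coefficients in a simplicial cone forces $a_i=a'_{\kappa(i)}$.

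For the $F$-polynomial equality I would pass to the stability condition $\zeta'(\theta)$ on $\mca{D}'\Gamma$ defined at the start of \S\ref{subsec_g_to_F}. Its semistable objects with purely real $Z_{\theta}$-value cut out a Serre subcategory $\mca{S}_\theta$ common to $\A'_{\kk}$ and $\A'_{\kk'}$, generated respectively by $\{ s_{\kk,j} : j\notin J \}$ and $\{ s_{\kk',j} : j\notin J' \}$. A face-level analogue of the theorem at the end of \S\ref{subsec_Tstr} (which handles the case of full coincidence of chambers) should supply an equivalence $\A'_{\kk}/\mca{S}_\theta \simeq \A'_{\kk'}/\mca{S}_\theta$ sending the residual simples $\{ \bar{s}_{\kk,i} : i\in J \}$ to $\{ \bar{s}_{\kk',\kappa(i)} : i\in J \}$. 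Lifting to $\mca{D}'\Gamma$ via the torsion-pair machinery of \S\ref{subsec_comp}, one identifies $\Phi^{-1}_{\kk}(\Gamma_{\kk,i})$ with $\Phi^{-1}_{\kk'}(\Gamma_{\kk',\kappa(i)})$ modulo summands in $\mca{S}_\theta$; such summands contribute trivially to the framed $H^1$ computing $R'_{\kk,i}$, so $R'_{\kk,i}\cong R'_{\kk',\kappa(i)}$, and Theorem~\ref{thm_Fg}(1) yields $F_{\kk,i}=F_{\kk',\kappa(i)}$.

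The main obstacle will be the face-level strengthening just invoked: the theorem at the end of \S\ref{subsec_Tstr} treats only full-dimensional coincidence of chambers, whereas here the chambers share a face of arbitrary codimension. One needs $\mca{S}_\theta$ to be intrinsic to the wall, computed from $\zeta'(\theta)$ alone rather than from a neighboring chamber, and must verify that the induced equivalence of quotient hearts lifts to the derived category in a way that matches the projective-like objects whose classes we have arranged to agree under $\kappa$. Once this categorical matching is in place, the $F$-polynomial identity is formal from Theorem~\ref{thm_Fg}(1), completing the proof.
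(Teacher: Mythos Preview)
Your first part --- reading the hypothesis as a coincidence of faces of the simplicial cones $\overline{C^*_{\A'_{\kk}}}$ and $\overline{C^*_{\A'_{\kk'}}}$, matching extremal rays to produce $\kappa$, and reading off $a_i=a'_{\kappa(i)}$ and $g_{\kk,i}=g_{\kk',\kappa(i)}$ --- is exactly the paper's argument.

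For the $F$-polynomial equality, however, the paper does \emph{not} attempt to match the modules $R'_{\kk,i}$ and $R'_{\kk',\kappa(i)}$. Instead it stays at the level of torus automorphisms: it factorizes
\[
\mr{Ad}\pf_{\mca{T}'_\kk}=\mr{Ad}\pf_{\mca{T}'_\theta}\circ\mr{Ad}\pf_{\mca{C}_{\kk,J}},
\]
where $\mca{T}'_\theta$ is the torsion class attached to the common wall heart $\A'_\theta$ and $\mca{C}_{\kk,J}$ is the Serre subcategory of $\A'_\kk$ generated by the simples $s_{\kk,j}$ with $j\notin J$ (and $\ep=+$). The point is that $\mr{Ad}\pf_{\mca{T}'_\theta}$ depends only on $\theta$, not on $\kk$, while the residual factor $\mr{Ad}\pf_{\mca{C}_{\kk,J}}$ fixes $x_{\kk,i}$ for $i\in J$ because $\chi([\Gamma_{\kk,i}],[s_{\kk,j}])=\delta_{ij}=0$ for $j\notin J$. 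Since $g_{\kk,i}=g_{\kk',\kappa(i)}$ already gives $x_{\kk,i}=x_{\kk',\kappa(i)}$, applying the two factorizations to this common variable yields $F_{\kk,i}=F_{\kk',\kappa(i)}$ immediately. No object-level matching is needed: everything happens in the quantum torus via the integration map and the motivic factorization identities of \S\ref{subsec_eq_in_MH}.

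The gap you flag in your approach is real, and I think more serious than you suggest. The objects $\Phi_{\kk}^{-1}(\Gamma_{\kk,i})$ live in $\pera$, not in $\dfa$, so a Serre quotient by $\mca{S}_\theta\subset\dfa$ does not literally act on them; one would need to pass to the Verdier quotient and track what survives. More importantly, the claim that ``summands in $\mca{S}_\theta$ contribute trivially to the framed $H^1$ computing $R'_{\kk,i}$'' is unjustified: the simples $s_{\kk,j}$ for $j\notin J$ can lie in $\mca{T}'_\kk[-1]\subset\A'[-1]$, and such pieces \emph{do} contribute to $H^1_{\A'}$. So even if the derived-categorical identification modulo $\mca{S}_\theta$ could be made precise, it would not obviously yield $R'_{\kk,i}\cong R'_{\kk',\kappa(i)}$. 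The paper's automorphism-level argument sidesteps this entirely: it only needs the Grothendieck-group orthogonality $\chi([\Gamma_{\kk,i}],[s_{\kk,j}])=0$, which is free.
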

\begin{proof}
By Theorem \ref{thm_Fg}, $g_{\kk,i}$ is primitive and 
\[
g_{\kk,i}\in \bigcap_{j\neq i}W_{\kk,j}\cap \overline{C_\kk^*}
\]
where 
\[
W_{\kk,j}:=\{\theta\in M_\mathbb{R}\mid \langle \theta,[s_{\kk,i}]\rangle=0\}.
\]
Then we have 
\[
\mr{Int}\left(\bigcap_{j\notin J}W_{\kk,j}\cap \overline{C_\kk^*}\right)
=
\left\{\sum_{i\in J}a_i\cdot g_{\kk,i}\mid a_i>0\right\}.
\]
The bijection $\kappa\colon J\to J'$ and
\[
a_i=a'_{\kappa(i)},\quad 
g_{\kk,i}=g_{\kk',\kappa(i)}
\]
follow from this description.

Let $\mca{C}_{\kk,J}$ be the full subcategory of $J_\kk$-modules supported on 
\[
\{i\mid i\neq J,\ep(\kk,i)=+\}.
\]
\begin{figure}[htbp]
  \centering
  \input{fig9.tpc}
  \caption{$\mca{A}'_\theta$}
  \label{}
\end{figure}
Then we have
\[
\A'_\theta=\A'_\kk(\mca{C}_{\kk,J}[1],{}^\bot\mca{C}_{\kk,J})\footnote{The category $\A\pf_\theta$ is not a module category in general.}.
\]
We define 
\[
\mathrm{Ad}\pf_{\mca{T}'_{\theta}}, \mathrm{Ad}\pf_{\mca{C}_{\kk,J}}\colon \QT\pf_{\mr{sc}}\overset{\sim}{\too} \QT\pf_{\mr{sc}}
\]
in the same way in \S \ref{subsec_531}, then we have
\[
\mathrm{Ad}\pf_{\mca{T}'_{\theta}}\circ\mathrm{Ad}\pf_{\mca{C}_{\kk,J}}=\mathrm{Ad}\pf_{\mca{T}'_{\kk}}
\]
as Theorem \ref{thm_factor2}.

Note that $\mathrm{Ad}\pf_{\mca{C}_{\theta}}$ depends on $\theta$, but not on $\kk$. 
Hence we get $F_{\kk,i}=F_{\kappa(i),\kk'}$.
\end{proof}


\providecommand{\bysame}{\leavevmode\hbox to3em{\hrulefill}\thinspace}
\providecommand{\MR}{\relax\ifhmode\unskip\space\fi MR }
\providecommand{\MRhref}[2]{%
  \href{http://www.ams.org/mathscinet-getitem?mr=#1}{#2}
}
\providecommand{\href}[2]{#2}

\end{document}